\documentclass[12pt,draftcls,onecolumn]{IEEEtran}
\usepackage[utf8]{inputenc} 
\usepackage[T1]{fontenc}    
\usepackage{dsfont}
\usepackage[maxfloats=100]{morefloats}[2015/07/22]
\usepackage{multirow}
\usepackage{wrapfig}
\usepackage[T1]{fontenc}
\usepackage{lmodern}
\usepackage{booktabs}       
\usepackage{amsfonts}       
\usepackage{nicefrac}       
\usepackage{microtype}      
\usepackage{enumerate}
\usepackage{comment}
\usepackage{lipsum}
\usepackage{mathtools}
\usepackage{cuted}
\usepackage{float}
\usepackage[dvipsnames,table,xcdraw]{xcolor}
\usepackage{bbm}
\usepackage{varioref}
\usepackage{hyperref}
                                
\usepackage{amssymb} 
\usepackage{rotating}
\usepackage{graphicx}
\usepackage{subcaption}
\usepackage[normalem]{ulem}

\usepackage{amsthm}
\DeclareUnicodeCharacter{00A0}{~}
\usepackage{algorithm}
\usepackage{algpseudocode}
\algnewcommand{\Initialize}[1]{%
	\State \textbf{Initialization:}
	\Statex {\raggedright #1}
}
\newtheorem{assumption}{Assumption}
\newtheorem{theorem}{Theorem}
\newtheorem{lemma}{Lemma}
\newtheorem{proposition}{Proposition}
\newtheorem{definition}{Definition}

\theoremstyle{plain}
\newtheorem{remark}{Remark}

\newcommand{\fy}[1]{{\color{black}#1}}

\newcommand{\mytfrac}{\tfrac}

\title{\LARGE \bf An Incremental Gradient Method for Optimization Problems with Variational Inequality Constraints}

\author{Harshal~D.~Kaushik$^{1}$, \thanks{This first author is with Bayer and can be contacted at {\tt\small (e-mail: harshaldkaushik@gmail.com).} The second and third authors are with the Department of Industrial and Systems Engineering, Rutgers University, {\tt\small (e-mail: ss4303@scarletmail.rutgers.edu, farzad.yousefian@rutgers.edu)}, respectively.}
\and Sepideh Samadi$^{2}$, \and Farzad~Yousefian$^{3}$  
\thanks{Yousefian acknowledges the support of NSF CAREER grant ECCS$-1944500$, ONR grant N00014-22-1-2757, and DOE grant DE-SC0023303.}
\thanks{A very preliminary version of this work appeared in the 2021 American  Control Conference (ACC), IEEE, New Orleans, LA, USA, 2021.\cite{KaushikYousefianACC21}
}}

\begin{document}
\sloppy
\maketitle
\thispagestyle{empty}
\pagestyle{plain}

\maketitle

\begin{abstract}
We consider \fy{minimizing a sum of agent-specific nondifferentiable merely convex functions over the solution set of a variational inequality (VI) problem in that} each agent is associated with a local monotone mapping. \fy{This problem finds an application in computation of the best equilibrium in nonlinear complementarity problems arising in transportation networks}. We develop an iteratively regularized incremental gradient method where at each iteration, agents communicate over a cycle graph to update their solution iterates using their local information about the objective and the mapping. The proposed method is single-timescale in the sense that it does not involve any excessive hard-to-project computation per iteration.  We derive non-asymptotic agent-wise convergence rates for the suboptimality of the global objective function and infeasibility of the VI constraints measured by a suitably defined dual gap function. The proposed method appears to be the first fully iterative scheme equipped with iteration complexity that can address distributed optimization problems with VI constraints \fy{over cycle graphs}. \fy{Preliminary numerical experiments for a transportation network problem and a support vector machine model are presented}. 
\end{abstract}


\section{Introduction}\label{sec:introduction}
\setcounter{section}{1}

\fy{Consider} a system with $m$ agents where the $i^{\text{th}}$ agent is associated with a component function $f_i:\mathbb{R}^n \to \mathbb{R}$ and a mapping $F_i: \mathbb{R}^n \to \mathbb{R}^{n}$. \fy{We consider} the following distributed constrained optimization problem.
\begin{alignat}{2}\label{prob:initial_problem}\tag{P}
&{\text{minimize }} \quad \textstyle\sum\nolimits_{i=1}^m f_i(x) \\
 &\text{subject to} \quad x \in \text{SOL}\left(X,\textstyle\sum\nolimits_{i=1}^mF_i \right), \nonumber
\end{alignat}
\noindent  where $X \subseteq \mathbb{R}^n$ is a set and $\text{SOL}\left(X,\sum\nolimits_{i=1}^mF_i \right)$ denotes the solution set of the variational inequality $\text{VI}\left(X,\sum\nolimits_{i=1}^mF_i \right)$ defined as follows: $x \in X$ solves $ \text{VI}\left(X,\sum\nolimits_{i=1}^mF_i \right)$ if we have $(y-x)^T\sum\nolimits_{i=1}^mF_i(x)\geq 0$ for all $y \in X$. Problem \eqref{prob:initial_problem} represents a distributed optimization framework in the sense that the information about $f_i$ and $F_i$ is locally known to the $i^{\text{th}}$ agent, while the set $X$ is globally known. 
Model \eqref{prob:initial_problem} captures the canonical formulation of distributed optimization  
\begin{alignat}{2}\label{prob:canonical_DO_problem} 
&{\text{minimize }} \quad \textstyle\sum\nolimits_{i=1}^m f_i(x) \\
 &\text{subject to} \quad x \in X, \nonumber
\end{alignat}
that has been extensively studied. Indeed, by choosing $F_i(x):= 0_n$ for all $i$, model \eqref{prob:initial_problem} is equivalent to model \eqref{prob:canonical_DO_problem}. 

{\noindent {\bf Motivating example:}} Nonlinear complementarity problems (NCP) have been employed to formulate diverse applications in engineering and economics. The celebrated Wardrop's principle of equilibrium in traffic networks and also, the Walras's law of competitive equilibrium in economics are among important examples that can be represented using NCP (cf.~\cite{FerrisPang97}). Formally, NCP is defined as follows. Given a mapping $F:\mathbb{R}_+^n\to \mathbb{R}^n$, $x \in \mathbb{R}^n$ solves $\text{NCP}(F)$ if $0\leq x \perp F(x) \geq  0$, where $\perp$ denotes the perpendicularity operator between two vectors. It is known that $\text{NCP}(F)$ can be cast as $\text{VI}( \mathbb{R}^n_{+},F)$ (see Proposition 1.1.3 in~\cite{FacchineiPang2003}). In many applications where $F$ is merely monotone, $\text{NCP}(F)$ may admit multiple equilibria. In such cases, one may consider finding the best equilibrium with respect to a global metric $f:\mathbb{R}^n \to\mathbb{R}$. For example in traffic networks, the total travel time of the network users can be considered as the objective $f$. In fact, the problem of computing the best equilibrium of an NCP is important to be addressed particularly in the design of transportation networks where there is a need to estimate the efficiency of the equilibrium \cite{NisanBook2007,JohariThesis}. In this regime, the goal is to minimize $f(x)$ where $x$ solves $\text{NCP}(F)$. 
Consider a stochastic NCP \fy{associated with the mapping $\mathbb{E}[F(\bullet,\xi(\omega))]$} 
where $\xi:\Omega \to \mathbb{R}^d$ is a random variable associated with the probability space $(\Omega, \mathcal{F},\mathbb{P})$ and $F:\mathbb{R}_+^n \times \Omega \to \mathbb{R}^n$ is a stochastic single-valued mapping. Let $\mathcal{S}_i$ denote a local index set of independent and identically distributed samples from the random variable $\xi$. Employing a sample average approximation scheme, one can consider a distributed NCP given by $x \geq 0$, $\textstyle\sum\nolimits_{i=1}^m \textstyle\sum\nolimits_{\ell \in \mathcal{S}_i} F(x,\xi_\ell) \geq 0$, $x^T \left( \textstyle\sum\nolimits_{i=1}^m \textstyle\sum\nolimits_{\ell \in \mathcal{S}_i} F(x,\xi_\ell)\right)=0$. 
Let $f:\mathbb{R}_+^n \times \Omega \to \mathbb{R}^n$ denote a stochastic objective function that measures the performance of a given equilibrium at a realization of $\xi$. Then, the problem of distributed computation of the best equilibrium of the preceding NCP is formulated as
\begin{alignat}{2}\label{prob:dist_best_NCP} 
&{\text{minimize }} \quad \textstyle\sum\nolimits_{i=1}^m \textstyle\sum\nolimits_{\ell \in \mathcal{S}_i} f(x,\xi_\ell) \\
 &\text{subject to} \quad x \in \mbox{SOL}\left(\mathbb{R}^n_+,\textstyle\sum\nolimits_{i=1}^m \textstyle\sum\nolimits_{\ell \in \mathcal{S}_i} F(\bullet,\xi_\ell)\right). \notag
\end{alignat}
\noindent \fy{Model} \eqref{prob:initial_problem} captures problem \eqref{prob:dist_best_NCP} by defining $X\triangleq \mathbb{R}^n_+$, $f_i(x) \triangleq\sum\nolimits_{\ell \in \mathcal{S}_i} f(x,\xi_\ell)$, and $F_i(x) \triangleq\sum\nolimits_{\ell \in \mathcal{S}_i} F(x,\xi_\ell) $. 

	
{\noindent {{\bf  Scope and literature review:}} }
In addressing the proposed formulation \eqref{prob:initial_problem}, our focus in this paper lies in the development of an incremental gradient (IG) method. IG methods are among popular avenues for addressing the classical model \eqref{prob:canonical_DO_problem}~\cite{nedichThesis, wang2015,GurbuzbalanOzdaglarParrilo2017,GurbuzbalanOzdaglarParrilo2019}. In these schemes, utilizing the additive structure of the problem, the algorithm cycles through the data blocks and updates the local estimates of the optimal solution in a sequential manner \cite{BertsekasNLPBook2016,BertsekasIG11}. In addressing {the} constrained problems {with easy-to-project constraint sets}, the projected incremental gradient (P-IG) method and its subgradient variant were developed~\cite{NedBert2001}. The P-IG scheme is described as follows. Given an initial point $x_{0,1} \in X$ where $X \subseteq \mathbb{R}^n$ denotes the constraint set, for each $k \geq 0$, consider the update rules given by
 \begin{align*}
 	&x_{k,i+1}: = \mathcal{P}_X\left(x_{k,i}-\gamma_{k}\nabla f_i\left(x_{k,i}\right)\right), \quad {\text{for }i \in [m]},\\
 	&x_{k+1,1}: = x_{k,m+1}, 
 \end{align*} 
 where ${x_{k,i} \in \mathbb{R}^n}$ denotes agent $i$'s local copy of the decision variables at iteration $k$, $\mathcal{P}$ denotes the Euclidean projection operator defined as $\mathcal{P}_X(z)\triangleq \text{argmin}_{x\in X}\|x-z\|_2$, and $\gamma_k>0$ denotes the \fy{stepsize, and $[m]\triangleq \{1,\ldots,m\}$}. 
Recently, under strong convexity and twice continuous \fy{differentiability}, and also, boundedness of the generated iterates, the standard IG method was proved to converge with the rate $\mathcal{O}(1/k)$ in the unconstrained case \cite{GurbuzbalanOzdaglarParrilo2019}. This {is an} improvement to the {previously} known rate of $\mathcal{O}(1/\sqrt{k})$ for the merely convex case. Accelerated variants of IG schemes with provable convergence speeds were {also developed}, including  the incremental aggregated gradient method (IAG) \cite{BlattHeroGauchman2007,GurbuzbalanOzdaglarParrilo2017}, SAG \cite{RouxSchmidtBach2012}, and SAGA \cite{DefazioBachJulien2014}. 
\fy{Most of the past research efforts on the design and analysis of IG methods for constrained} optimization problems have focused on addressing easy-to-project sets or sets with {linear} functional inequalities. This has been done through employing duality theory, projection, or penalty methods (see \cite{ChangNedichScaglione2014,AybatHamedani2016,ScutariSun2019, NedichTatarenko2020}).  Also, a celebrated variant of the dual based schemes is the alternating direction method of multipliers (ADMM) (e.g., see \cite{MakhdoumiOzdaglar2017}). Other related papers that have utilized duality theory in distributed constrained regimes include \cite{Bertsekas2015,AybatHamedani2016, HamedaniAybat2019}.  

\noindent {\bf Research gap:} Despite the extensive work in the area of constrained optimization, no provably convergent \fy{single time-scale method exists} in the literature that can be employed to solve distributed optimization problems with VI constraints. \fy{This is mainly because, unlike in standard constrained optimization, the Lagrangian duality theory does not appear to lend itself to be directly employed for addressing VI constraints. The classical scheme for addressing optimization problems with VI constraints is a sequential regularization (SR) framework~\cite[Ch. 12]{FacchineiPang2003} where the regularized problem $\mbox{VI}(X,F+\eta_k\nabla f)$ is solved at every iteration $k$ of the scheme, while $\eta_k$ is updated in the outer iteration and reduced to zero. A key drawback of the SR scheme is that its iteration complexity is unknown and the scheme often convergences very slowly in practice. Moreover, the asymptotic convergence of this scheme is only established when $f$ is strongly convex and smooth.}

 \noindent {\bf Contributions:} \fy{Our main contributions are as follows:}
 
 \noindent  \textit{(i) Complexity guarantees for addressing model \eqref{prob:initial_problem}}: \fy{We develop} an \fy{IG} method equipped with agent-specific iteration complexity guarantees for solving distributed optimization problems with VI constraints of the form \eqref{prob:initial_problem}. To this end, employing a regularization-based relaxation technique, we propose a projected averaging iteratively regularized incremental gradient method (pair-IG) presented by Algorithm \ref{alg:IR-IG_avg}. In Theorem \ref{thm:rates}, under merely convexity of the global objective function and merely monotonicity of the global mapping, we derive new non-asymptotic suboptimality and infeasibility convergence rates for each agent's generated iterates. This implies a total iteration complexity of $\mathcal{O}\left((C_f+C_F)^4\epsilon^{-4}\right)$ for obtaining an $\epsilon$-approximate solution where $C_f$ and $C_F$ denote the bounds on the global objective function's subgradients and the global mapping over {a} compact convex set $X$, respectively. Iterative regularization (IR) has been recently employed as a constraint-relaxation technique in a class of bilevel optimization problems \cite{FarzadPushPull2020,FarzadOMS19} and also in regimes where the duality theory may not be directly applied \cite{FarzadMathProg17,KaushikYousefianSIOPT2021}. Of these, in our recent work \cite{KaushikYousefianSIOPT2021} we employed the IR technique to derive a provably convergent method for solving problem \eqref{prob:initial_problem} in a centralized framework, where the information of the objective function is globally known by the agents. Unlike in \cite{KaushikYousefianSIOPT2021}, here we assume {that} the agents have {access} only {to} local information about both the objective function and the mapping. {\fy{Notably,} this lack of centralized access to information introduces a challenge in both the design and the complexity analysis of the new algorithmic framework in addressing the distributed model \eqref{prob:initial_problem}.} 
 
  \noindent  \textit{(ii) Distributed averaging scheme}: In pair-IG, we employ a distributed averaging scheme where agents can choose their initial averaged iterate arbitrarily and independent from each other. This relaxation in the proposed IG method appears to be novel, even for the classical IG schemes {in addressing \eqref{prob:canonical_DO_problem}}. 
  
  \noindent  \textit{(iii) Rate analysis in the solution space}: Motivated by the recent developments of iterative methods for MPECs \cite{CSY2021MPEC}, it is important to characterize the speed of the proposed scheme in the solution space. To this end, under strong convexity of the global objective function, in Theorem \ref{thm:part2} we derive agent-specific rate statements that compare the generated sequence of each agent with the so-called Tikhonov trajectory.
  


\textbf{Notation:} 
 Throughout, a vector $x \in \mathbb{R}^n$ is assumed to be a column vector and $x^T$ denotes its transpose. \fy{We} use $\|x\|$ to denote the $\ell_2$-norm. For a convex function $f:\mathbb{R}^n\rightarrow\mathbb{R}$ with the domain $\text{dom}(f)$ and  any $x\in\text{dom}(f)$, a vector $ \tilde{\nabla}f(x) \in\mathbb{R}^n$ is called a subgradient of $f$ at $x$ if $f(x) +\tilde{\nabla}f(x)^Tf(y-x)\leq f(y)$ for  all $y\in \text{dom}(f)$. We let $\partial f(x)$ denote the subdifferential set of  function $f$ at $x$.  The Euclidean  projection of vector $x$ onto set $X$ is denoted by $\mathcal{P}_X(x)$. We let $\mathbb{R}^n_{+}$ and $\mathbb{R}^n_{++}$ denote the set $\left\{x\in \mathbb{R}^n\ |\ x \geq 0\right \}$ and $\left\{x\in \mathbb{R}^n\ |\ x >0\right \}$, respectively. Given a set $S \subseteq \mathbb{R}^n$, we let $\mbox{int}(S)$ denote the interior of $S$. 

\section{Algorithm outline}\label{sec:algo_outline}

{In this section we present the main assumptions on problem \eqref{prob:initial_problem}, the outline of the proposed algorithm, and a few preliminary results that will be applied later in the rate analysis. Throughout the paper, we let $f(x)\triangleq \sum_{i=1}^m f_i(x)$ and $F(x)\triangleq \sum_{i=1}^m F_i(x)$ denote the global objective and \fy{global mapping}, respectively. 
\begin{assumption}[Properties of problem \eqref{prob:initial_problem}]\label{assum:initial_prob} 
	\noindent (a) {Function $f_i:\mathbb{R}^n\rightarrow\mathbb{R}$ is real-valued and merely convex (possibly nondifferentiable) on its domain for all {$i \in [m]$}.}

	\noindent (b) Mapping $F_i:\mathbb{R}^n\rightarrow\mathbb{R}^n$ is real-valued, continuous, and merely monotone on its domain for all  {$i \in [m]$}. 
	
	\noindent (c) The set $X \subseteq \mbox{int}\left(\mbox{dom}(f)\cap \mbox{dom}(F)\right) $ is nonempty, convex, and compact.
\end{assumption}
\begin{remark}\label{rem:nonemptiness_SOL}\em
Assumption \ref{assum:initial_prob} \fy{immediately implies the following}. From~\cite[Theorem 2.3.5 and Corollary 2.2.5]{FacchineiPang2003}, $\mbox{SOL}(X,F)$ is nonempty, convex, and compact. For all $i$, the nonemptiness of the subdifferential set $\partial f_i(x)$ for any $x \in \mbox{int}(\mbox{dom}(f_i))$ is implied from Theorem 3.14 in \cite{Beck2017}. Also, Theorem 3.16 in \cite{Beck2017} implies that $f_i$ has bounded subgradients over the compact set $X$. Further, mapping $F_i$ is bounded over the set $X$. 
\end{remark}
In view of compactness of the set $X$ and continuity of $f$, throughout the paper we let positive scalars $M_X< \infty$ and $M_f < \infty$ be defined as $M_X \triangleq \sup_{x \in X} \|x\|$ and $M_f \triangleq \sup_{x \in X} |f(x)|$, respectively.  We also let $f^*\in \mathbb{R}$ denote the optimal objective value of problem \eqref{prob:initial_problem}. In view of Remark \ref{rem:nonemptiness_SOL}, throughout we let scalars $C_F>0$ and $C_f>0$ be defined such that for all {$i\in [m]$} and for all $x \in X$ we have $ \|F_i(x)\| \leq \mytfrac{C_F}{m}$, and $\left\|\tilde{\nabla}{ f_{i}\left( x\right)}\right\|\leq \mytfrac{C_f}{m}$ for all $\tilde{\nabla}{ f_{i}\left( x\right)} \in \partial f_i(x)$. In the following, we comment on the Lipschitz continuity of the local and global objective functions.
\begin{remark}\label{rem:lipschitz_parameters}
\em Under Assumption \ref{assum:initial_prob} and from Theorem 3.61 in \cite{Beck2017}, function $f_i$ is Lipschitz continuous with the parameter $\mytfrac{C_f}{m}$ over the set $X$, i.e., for all {$i \in [m]$} we have $|f_i(x)-f_i(y)|\leq \mytfrac{C_f}{m}\|x-y\|$ for all $x,y \in X$. We also have $\|\tilde \nabla f(x) \| \leq C_f$ for all $x \in X$ and all $\tilde \nabla f(x) \in \partial f(x)$. This implies that $|f(x)-f(y)|\leq C_f\|x-y\|$ for all $x,y \in X$. 
\end{remark}}
We now present an overview of the proposed method given by Algorithm \ref{alg:IR-IG_avg}.  We use vector $x_{k,i}$ to denote the local copy of the global decision vector maintained by agent $i$ at iteration $k$.  At each iteration, agents update their iterates in a cyclic manner.  Each agent {$i\in[m]$} uses only its local information including the subgradient of the function $f_i$ and mapping $F_i$ and evaluates the regularized mapping $F_i + \eta_k \tilde \nabla f_i$ at $x_{k,i}$.  Here, $\gamma_{k}$ and $\eta_k$ denote the stepsize and the regularization parameter at iteration $k$, respectively.  Importantly, through employing an iterative regularization technique, we let both of these parameters be updated iteratively at suitable prescribed rates (cf.  Theorem \ref{thm:rates}).  Each agent computes and returns a weighted averaging iterative denoted by $\bar x_{k,i}$ where the weights are characterized in terms of the stepsize $\gamma_k$ and an arbitrary scalar $r \in [0,1)$. Notably,  this averaging technique is carried out in a distributed fashion in the sense that agents do not require to start from the same initialized averaging iterate. 
\begin{algorithm}[t]
	\caption{projected averaging iteratively regularized Incremental subGradient (pair-IG)}\label{alg:IR-IG_avg}
\begin{algorithmic}[0] 
	 \State \textbf{input}: Agent $1$ arbitrarily chooses an initial vector $ x_{0,1} \in X$. Agent $i$ arbitrarily chooses $\bar x_{0,i} \in X$, for all $i \in [m]$.  Let $S_{0} := \gamma_{0}^r$ with an arbitrary $0\leq r <1$.
		\For {$k = 0,1, \dots, {N-1}$}
		\State Update $S_{k+1}:=S_{k}+\gamma_{k+1}^r $
		\For {$i = 1, \dots, {m}$}
		\State \begin{align}
 		&x_{k,{i+1}} :=  \mathcal{P}_X \left({x_{k,i}}-\gamma_k\left(F_i\left(x_{k,i}\right)+ {\eta_{k}} \tilde{\nabla} f_{i}\left( x_{k,i}\right)\right)\right)\label{eqn:alg_step_1}\\
		&{\bar{x}_{k+1,i}:=\left(\tfrac{S_k}{S_{k+1}}\right)\bar{x}_{k,i}  + \left(\tfrac{\gamma_{k+1}^r}{S_{k+1}}\right) x_{k,i+1} } \label{eqn:alg_step_2}
		\end{align}
	 \EndFor
		\State Set $x_{k+1,1}:= x_{k,{m+1}}$
	 \EndFor
 	\State \textbf{return}: $\bar{x}_{N,i}$ for all $i\in [m]$ 
	\end{algorithmic}
\end{algorithm}
Next we show that {for any $i  \in [m]$, $\bar{x}_{N,i}$} is indeed a well-defined weighted average of $\bar x_{0,i}$ and {the iterates} $x_{k-1,i+1}$ for {$k\in [N]$}.
{\begin{lemma}\label{lem:averaging_seq}
	Consider the sequence {$\{\bar x_{k,i}\}$} generated by agent {$i\in [m]$} in Algorithm \ref{alg:IR-IG_avg}. For {$k\in \{0,\ldots,N\}$}, let us define the weights $\lambda_{k,N} \triangleq \tfrac{\gamma_k^r}{\sum_{j = 0}^{N} \gamma_j^r }$. Then for all {$i \in [m] $} we have 
	\begin{align*}
	\bar{x}_{N,i} = \lambda_{0,N}\bar  x_{0,i} + \textstyle\sum_{k=1}^{N}\lambda_{k,N}x_{k-1,i+1}.\end{align*}
	 Further, for a convex set $X$ we have $\bar{x}_{N,i} \in X$. 
\end{lemma}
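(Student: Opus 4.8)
The plan is to proceed by induction on the iteration counter, after first recording an auxiliary fact about the normalizing sequence. From the initialization $S_0 = \gamma_0^r$ and the update $S_{k+1} = S_k + \gamma_{k+1}^r$, a one-line telescoping argument gives $S_k = \sum_{j=0}^k \gamma_j^r$ for every $k \in \{0,\dots,N\}$. In particular $S_N = \sum_{j=0}^N \gamma_j^r$, so the weights in the claim can be rewritten as $\lambda_{k,N} = \gamma_k^r / S_N$, and proving the lemma amounts to showing $\bar x_{N,i} = (\gamma_0^r/S_N)\bar x_{0,i} + \sum_{k=1}^N (\gamma_k^r/S_N)\, x_{k-1,i+1}$.

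Next I would establish, by induction on $n \in \{0,1,\dots,N\}$, the intermediate identity
\begin{align*}
\bar x_{n,i} = \frac{\gamma_0^r}{S_n}\,\bar x_{0,i} + \sum_{k=1}^{n} \frac{\gamma_k^r}{S_n}\, x_{k-1,i+1}.
\end{align*}
The base case $n = 0$ is immediate since the sum is empty and $S_0 = \gamma_0^r$, so the right-hand side equals $\bar x_{0,i}$. For the inductive step, substitute the induction hypothesis for $\bar x_{n,i}$ into the averaging recursion \eqref{eqn:alg_step_2}, namely $\bar x_{n+1,i} = (S_n/S_{n+1})\,\bar x_{n,i} + (\gamma_{n+1}^r/S_{n+1})\, x_{n,i+1}$: the factor $S_n/S_{n+1}$ cancels the $1/S_n$ appearing in every term of the hypothesis, leaving $1/S_{n+1}$ as the common denominator, while the additional term $(\gamma_{n+1}^r/S_{n+1})\, x_{n,i+1}$ is precisely the $k = n+1$ summand. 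Taking $n = N$ and using $S_N = \sum_{j=0}^N \gamma_j^r$ yields the stated formula.

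Finally, for the claim $\bar x_{N,i} \in X$, I would observe that the coefficients $\lambda_{k,N}$ are nonnegative and, by the auxiliary fact, $\sum_{k=0}^N \lambda_{k,N} = S_N/S_N = 1$; hence $\bar x_{N,i}$ is a convex combination of $\bar x_{0,i}$ and the iterates $x_{k-1,i+1}$ for $k \in [N]$. By the input step of Algorithm \ref{alg:IR-IG_avg} we have $\bar x_{0,i} \in X$, and each $x_{k-1,i+1}$ is produced by the Euclidean projection $\mathcal{P}_X(\cdot)$ in \eqref{eqn:alg_step_1} (using the convention $x_{k,m+1} = x_{k+1,1}$, with $x_{0,1} \in X$ given), so it lies in $X$; convexity of $X$ then gives $\bar x_{N,i} \in X$. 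I do not expect a genuine obstacle in this argument — the only points needing care are the bookkeeping of the shifted indices ($x_{k-1,i+1}$ versus $x_{k,i+1}$) and the handling of the empty-sum base case, which is why I would state the intermediate identity explicitly rather than manipulating $\bar x_{N,i}$ directly.
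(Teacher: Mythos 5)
Your proposal is correct and follows essentially the same route as the paper's proof: the telescoping identity $S_k=\sum_{j=0}^k\gamma_j^r$, induction on the iteration counter using the recursion \eqref{eqn:alg_step_2}, and the convex-combination argument (nonnegative weights summing to one, with $\bar x_{0,i}\in X$ by initialization and $x_{k-1,i+1}\in X$ by projection) for the membership claim. No gaps; the index bookkeeping in your inductive step is handled correctly.
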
}
\begin{proof}
	We use induction on $N\geq 0$ to show the equation.  For $N = 0$, from $\lambda_{0,0} = 1$ we have $\bar{x}_{0,i} = \lambda_{0,0}\bar x_{0,i}$. Now,  assume that the {equation} holds for some $N\geq 0$. {This implies}
	\begin{align}\label{eqn:avg_1}
		\bar{x}_{N,i} &= \lambda_{0,N} \bar x_{0,i} +  \textstyle\sum_{k=1}^{N}\lambda_{k,N}x_{k-1,i+1}\nonumber\\ &=  (\gamma_0^r \bar   x_{0,i} +  \textstyle\sum_{k=1}^{N}\gamma_k^r x_{k-1,i+1})/({\textstyle\sum_{j = 0}^{N} \gamma_j^r }).
	\end{align}
	Using equation \eqref{eqn:avg_1}, we now show that the hypothesis statement holds for any $N+1$.  From equation \eqref{eqn:alg_step_2} we have
		$\bar{x}_{N+1,i}=\left(\mytfrac{S_N}{S_{N+1}}\right)\bar{x}_{N,i}  + \left(\mytfrac{\gamma_{N+1}^r}{S_{N+1}}\right) x_{N, i+1}$.  Note that from {equation \eqref{eqn:alg_step_2}} in Algorithm \ref{alg:IR-IG_avg} we have $S_k=\sum_{t=0}^{k}\gamma_{t}^r$ for all $k\geq 0$.  From this and using equation \eqref{eqn:avg_1} we obtain
	\begin{align*}
	\bar{x}_{N+1,i}&=\left(\mytfrac{\sum_{t=0}^{N}\gamma_{t}^r}{\sum_{t=0}^{N+1}\gamma_{t}^r}\right)\bar{x}_{N,i}  + \left(\mytfrac{\gamma_{N+1}^r}{\sum_{t=0}^{N+1}\gamma_{t}^r}\right) x_{N,i+1} \\
	&= \mytfrac{\gamma_0^r \bar  x_{0,i} +  \sum_{k=1}^{N}\gamma_k^r x_{k-1,i+1} + \gamma_{N+1}^r x_{N,i+1}}{\sum_{t=0}^{N+1}\gamma_{t}^r}\\&= \mytfrac{\gamma_0^r \bar  x_{0,i} +  \sum_{k=1}^{N+1}\gamma_k^r x_{k-1,i+1}}{\sum_{t=0}^{N+1}\gamma_{t}^r}.	
	\end{align*}	
From the definition of $\lambda_{k,N}$ we conclude that the hypothesis holds for $N+1$ and thus, the result holds for all $N\geq 0$.  To show the second part, note that from the {initialization in Algorithm \ref{alg:IR-IG_avg}} and the projection in equation \eqref{eqn:alg_step_1}, we have $\bar x_{0,i}, x_{k-1,i+1}\in X$ for all $i$ and $k\geq 1$. From the first part, $\bar{x}_{N,i}$ is a convex combination of $\bar x_{0,i}, x_{0,i+1}, \ldots,x_{N-1,i+1}$. Therefore, \fy{$\bar{x}_{N,i} \in X$ using the convexity of the set $X$.}
\end{proof}
For the ease of presentation throughout the analysis, we define a sequence $\{x_k\}$ as follows.
\begin{definition}\label{def:convenient_notation}
Consider Algorithm \ref{alg:IR-IG_avg}. Let $\{x_k\}$ be {defined as  
$x_k \triangleq x_{k-1, m+1}=x_{k,1},$ for all $k \geq 1,$ with $x_0\triangleq x_{0,1}.$}
\end{definition}
In the following result, we characterize the distance between the local variable of {any} arbitrary agent with that of the first and the last agent at any given iteration. 
\begin{lemma}\label{lem:neighbor_agents}
Consider Algorithm \ref{alg:IR-IG_avg}. Let Assumption \ref{assum:initial_prob} hold. Then the following inequalities hold for all {$i\in[m]$ and $k\geq 0$}

\noindent (a) $\| x_k - x_{k,i} \|\leq \mytfrac{(i-1)\gamma_{k}\left(C_F+\eta_{k}C_f\right)}{m}$.

\noindent (b) $	\left\| x_{k,i+1} -  x_{k+1} \right\| \leq  \mytfrac{(m-i){\gamma_{k}}\left({C_F} + {\eta_{k}} C_f\right)}{m}$. 
\end{lemma}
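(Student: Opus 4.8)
The plan is to exploit the nonexpansivity of the Euclidean projection together with the uniform bounds on $F_i$ and the subgradients of $f_i$ that are recorded right after Remark \ref{rem:nonemptiness_SOL}. Recall that for all $i \in [m]$ and $x \in X$ we have $\|F_i(x)\| \le C_F/m$ and $\|\tilde\nabla f_i(x)\| \le C_f/m$. The central observation for part (a) is that, by Definition \ref{def:convenient_notation}, $x_k = x_{k,1}$, and the update \eqref{eqn:alg_step_1} together with $x_{k,1} \in X$ (so $\mathcal{P}_X(x_{k,1}) = x_{k,1}$) gives, for each $j \in [m]$,
\begin{align*}
\|x_{k,j+1} - x_{k,j}\|
&= \left\| \mathcal{P}_X\!\left(x_{k,j} - \gamma_k\bigl(F_j(x_{k,j}) + \eta_k \tilde\nabla f_j(x_{k,j})\bigr)\right) - \mathcal{P}_X(x_{k,j}) \right\| \\
&\le \gamma_k \left\| F_j(x_{k,j}) + \eta_k \tilde\nabla f_j(x_{k,j}) \right\|
\le \gamma_k\left(\tfrac{C_F}{m} + \eta_k \tfrac{C_f}{m}\right) = \tfrac{\gamma_k(C_F + \eta_k C_f)}{m},
\end{align*}
using nonexpansivity of $\mathcal{P}_X$ and the triangle inequality. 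Note this requires $x_{k,j} \in X$ for every $j$, which holds since $x_{k,1}\in X$ and each subsequent iterate is a projection onto $X$; this is exactly where Assumption \ref{assum:initial_prob}(c) (so that the bounds on $F_i$, $\tilde\nabla f_i$ apply at every $x_{k,j}\in X$) is used.

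For part (a), I would then telescope: $x_k - x_{k,i} = x_{k,1} - x_{k,i} = \sum_{j=1}^{i-1}(x_{k,j} - x_{k,j+1})$, so by the triangle inequality and the per-step bound above,
\begin{align*}
\|x_k - x_{k,i}\| \le \sum_{j=1}^{i-1} \|x_{k,j} - x_{k,j+1}\| \le (i-1)\cdot \tfrac{\gamma_k(C_F + \eta_k C_f)}{m},
\end{align*}
which is the claimed inequality (for $i=1$ both sides are $0$). For part (b), I would use that $x_{k+1} = x_{k+1,1} = x_{k,m+1}$ by Definition \ref{def:convenient_notation}, and write $x_{k,i+1} - x_{k+1} = x_{k,i+1} - x_{k,m+1} = \sum_{j=i+1}^{m}(x_{k,j} - x_{k,j+1})$; the same telescoping argument gives $\|x_{k,i+1} - x_{k+1}\| \le (m-i)\cdot\tfrac{\gamma_k(C_F + \eta_k C_f)}{m}$ (with the empty sum for $i=m$ giving $0$).

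I do not anticipate a genuine obstacle here; the only point requiring a little care is bookkeeping the indices so that the telescoping sums have exactly $i-1$ and $m-i$ terms respectively, and confirming that every iterate $x_{k,j}$ at which we evaluate $F_j$ and $\tilde\nabla f_j$ lies in $X$ so that the uniform bounds are legitimately applicable. Both are handled by the remark that $x_{k,1}\in X$ and the projection in \eqref{eqn:alg_step_1} keeps all subsequent within-cycle iterates in $X$.
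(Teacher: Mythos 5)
Your proposal is correct and follows essentially the same route as the paper: nonexpansivity of $\mathcal{P}_X$ plus the uniform bounds $\|F_i(x)\|\le C_F/m$, $\|\tilde\nabla f_i(x)\|\le \eta$-free $C_f/m$ on $X$, applied step-by-step along the cycle; the paper merely organizes the accumulation as a (forward, resp.\ downward) induction on $i$ rather than as an explicit telescoping sum, which is a cosmetic difference. Your attention to the fact that every $x_{k,j}$ lies in $X$ (so the bounds apply and $\mathcal{P}_X(x_{k,j})=x_{k,j}$) matches the paper's implicit use of $\mathcal{P}_X(x_k)=x_k$ in its induction step.
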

\begin{proof}
\noindent (a) Let $k\geq 0$ be an arbitrary integer. We use induction on $i$ to show this result. From Definition \ref{def:convenient_notation},  for $i=1$ and $k\geq 0$ we have $\|x_k - x_{k,1}\| = 0$, implying that the result holds for $i=1$. Now suppose the hypothesis statement holds for some {$i\in [m]$}. We have
	\begin{align*}
		&\| x_k - x_{k,i+1} \| \\&=  \left\| \mathcal{P}_X \left(x_{k}\right) - \mathcal{P}_X \left( x_{k,i} - \gamma_{k}\left(F_i(x_{k,i}) + \eta_{k}\tilde{\nabla} f_i\left(x_{k,i}\right)\right)  \right)  \right\|\nonumber\\   &\leq  \left\| x_{k} - x_{k,i} \right\| +  \gamma_{k}\left\| F_i(x_{k,i}) + \eta_{k}\tilde{\nabla} f_i\left(x_{k,i}\right)  \right\|\nonumber\\ 
		& \leq  \left\| x_{k} - x_{k,i} \right\| +  \mytfrac{\gamma_{k}\left(C_F + \eta_k C_f\right)}{m} \leq  \mytfrac{i\gamma_{k}\left(C_F + \eta_k C_f\right)}{m},	
	\end{align*}
where the first inequality is obtained from the nonexpansivity property of the projection. Therefore the hypothesis statement holds for any $i$ and the proof of part (a) is completed. 

\noindent (b) To show this result, we use downward induction on {$i\in[m]$}. Note that the relation trivially holds for the base case $i=m$. Suppose it holds for some {$i \in \{2,\ldots,m\}$}. We show that it holds for $i-1$ as well.  From Definition \ref{def:convenient_notation} we have
\begin{align*}
	\left\| x_{k,i} -  x_{k+1} \right\| &= \| x_{k,i} - x_{k,i+1} +x_{k,i+1} -  x_{k,m+1} \| \\&\leq \| x_{k,i} - x_{k,i+1} \| + \|x_{k,i+1} -  x_{k,m+1}\|.
\end{align*}
{From equation \eqref{eqn:alg_step_1}, the hypothesis statement, and the nonexpansivity property of the projection, we obtain}
\begin{align*}
	&\left\| x_{k,i} -  x_{k+1} \right\|\\ 
	&\leq \left\| \mathcal{P}_X( x_{k,i}) - \mathcal{P}_X \left({x_{k,i}}-{\gamma_{k}}\left(F_i\left(x_{k,i}\right)+ {\eta_{k}} \tilde{\nabla} f_{i}\left( x_{k,i}\right)\right)\right)\right\| \\
	&+ \mytfrac{(m-i){\gamma_{k}}\left({C_F} + {\eta_{k}} C_f\right)}{m}
	\leq \mytfrac{(m-i+1){\gamma_{k}}\left({C_F} + {\eta_{k}} C_f\right)}{m}.
\end{align*}
This completes the proof of part (b).
\end{proof}
We note that the generated agent-wise iterates $\bar x_{k,i}$ in Algorithm \ref{alg:IR-IG_avg}, as the scheme proceeds,  may not be solutions to $\mbox{VI}(X,F)$ and so, they may not necessarily be feasible to problem \eqref{prob:initial_problem}.  To quantify the infeasibility of these iterates, we employ a dual gap function (cf. Chapter 1 in \cite{FacchineiPang2003}) defined as follows. 

\begin{definition}[The dual gap function {\cite{FacchineiPang2003}}]\label{def:gap}\normalfont
Consider a closed convex set $X\subseteq \mathbb{R}^n$ and  a continuous mapping $F:X\rightarrow\mathbb{R}^n$. The dual gap function at $x\in X$ is defined as $\mbox{GAP}(x) \triangleq \sup_{y\in X} F(y)^T(x-y)$.
\end{definition}
Note that under Assumption \ref{assum:initial_prob}, the dual gap function is well-defined. This is because for any $x\in X$,  we have $\mbox{GAP}(x)\geq 0$. Further, it is known that when the mapping $F$ is continuous and monotone, $x \in \text{SOL}(X,F)$ if and only if $\mathrm{GAP}(x)=0$ (cf.~\cite{Nem11}). We conclude this section by presenting the following result that will be utilized in the rate analysis. 
\begin{lemma}[\fy{\cite[Lemma 2.14]{KaushikYousefianSIOPT2021}}]\label{lem:harmonic_series_bound} Let $\beta \in [0,  1)$, $\Gamma\geq 1$, and $K$ be an integer. Then for all \fy{$K\geq \left(\sqrt[1-\beta]{2}-1\right)\Gamma$}, we have $
\mytfrac{{(K+\Gamma)}^{1-\beta}}{2(1-\beta)} \leq \textstyle\sum_{k = 0}^{K}(k+\Gamma)^{-\beta}\leq \mytfrac{{(K+\Gamma)}^{1-\beta}}{1-\beta}.$
\end{lemma}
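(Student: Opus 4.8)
The plan is to use the standard integral comparison for the monotone summand $g(t)\triangleq (t+\Gamma)^{-\beta}$. Since $\beta\in[0,1)$ and $\Gamma\geq 1$, the function $g$ is nonnegative and non-increasing on $[0,\infty)$, and the antiderivative identity $\int_a^b (t+\Gamma)^{-\beta}\,dt=\frac{(b+\Gamma)^{1-\beta}-(a+\Gamma)^{1-\beta}}{1-\beta}$ holds for $0\leq a\leq b$. Both inequalities will then follow by sandwiching the sum between two integrals and estimating the resulting lower-order terms; the restriction $K\geq(2^{1/(1-\beta)}-1)\Gamma$ enters only in the lower bound.

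For the upper bound I would split $\sum_{k=0}^K g(k)=g(0)+\sum_{k=1}^K g(k)$ and, using that $g$ is non-increasing so that $g(k)\leq\int_{k-1}^k g(t)\,dt$ for each $k\geq 1$, obtain $\sum_{k=1}^K g(k)\leq\int_0^K g(t)\,dt=\frac{(K+\Gamma)^{1-\beta}-\Gamma^{1-\beta}}{1-\beta}$. It then remains to absorb $g(0)=\Gamma^{-\beta}$: since $1-\beta\leq 1\leq\Gamma$ one has $(1-\beta)\Gamma^{-\beta}\leq\Gamma\cdot\Gamma^{-\beta}=\Gamma^{1-\beta}$, i.e. $\Gamma^{-\beta}\leq\frac{\Gamma^{1-\beta}}{1-\beta}$, and adding this to the previous estimate yields $\sum_{k=0}^K g(k)\leq\frac{(K+\Gamma)^{1-\beta}}{1-\beta}$. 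Note this half requires no lower bound on $K$. Starting the integral at $t=0$ rather than $t=-1$ is what keeps the argument valid even when $\Gamma=1$, avoiding the integrable singularity of $(t+\Gamma)^{-\beta}$ at $t=-1$.

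For the lower bound I would again invoke monotonicity, $g(k)\geq\int_k^{k+1}g(t)\,dt$, so that $\sum_{k=0}^K g(k)\geq\int_0^{K+1}g(t)\,dt=\frac{(K+1+\Gamma)^{1-\beta}-\Gamma^{1-\beta}}{1-\beta}\geq\frac{(K+\Gamma)^{1-\beta}-\Gamma^{1-\beta}}{1-\beta}$, where the last step uses that $t\mapsto t^{1-\beta}$ is increasing. The remaining point is to show $(K+\Gamma)^{1-\beta}-\Gamma^{1-\beta}\geq\tfrac12(K+\Gamma)^{1-\beta}$, equivalently $(K+\Gamma)^{1-\beta}\geq 2\Gamma^{1-\beta}$, equivalently $K+\Gamma\geq 2^{1/(1-\beta)}\Gamma$, which is precisely the hypothesis. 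Combining gives $\sum_{k=0}^K g(k)\geq\frac{(K+\Gamma)^{1-\beta}}{2(1-\beta)}$.

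There is no real obstacle here — the only bookkeeping care needed is the degenerate case $\beta=0$, where $g\equiv 1$, $1-\beta=1$, and $2^{1/(1-\beta)}=2$, and one should check that every displayed inequality above remains valid (it does), and being mindful that $K$ is an integer while the threshold $(2^{1/(1-\beta)}-1)\Gamma$ need not be, which is harmless since the threshold is used only through the inequality $K+\Gamma\geq 2^{1/(1-\beta)}\Gamma$.
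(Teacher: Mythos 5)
Your proof is correct and follows essentially the same route as the paper: integral comparison for the non-increasing summand $(t+\Gamma)^{-\beta}$, with the lower bound via $\int_0^{K+1}$ and the hypothesis $K+\Gamma\geq 2^{1/(1-\beta)}\Gamma$ used exactly as in the paper. The only (harmless) difference is in the upper bound, where you start the integral at $0$ and absorb the $k=0$ term separately, whereas the paper integrates from $-1$ and accepts the improper-but-convergent integral when $\Gamma=1$.
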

\section{Rate and complexity analysis}\label{sec:convergence_analysis}
{In this section we present the convergence and rate analysis of the proposed method under Assumption \ref{assum:initial_prob}.  After obtaining a preliminary inequality in Lemma \ref{lem:pseudo_bound_sequence} in terms of the sequence generated by the last agent, in Lemma \ref{lem:consensus_error_bound} we derive inequalities that relate the global objective and the dual gap function at the iterate of other agents with those of the last agent. Utilizing these results, in Proposition \ref{prop:error_bound_aIR-IG} we obtain agent-specific bounds on the objective function value and the dual gap function. Consequently, in Theorem \ref{thm:rates} we derive convergence rate statements under suitably chosen sequences for the stepsize and the regularization parameter. }
{\begin{lemma}\label{lem:pseudo_bound_sequence}
	Consider Algorithm \ref{alg:IR-IG_avg}. Let Assumption \ref{rem:nonemptiness_SOL} hold. Let $\{\gamma_{k}\}$ and $\{\eta_{{k}}\}$ be nonincreasing and strictly positive sequences.  For any arbitrary $y\in X$, for all $k\geq 0$ we have
	\begin{align}\label{eq_36}
			&{2\gamma_{k}^r\left(\eta_{k}\left(f(x_{k}) -f(y) \right)   +F(y)^T\left(x_{k}-y\right)\right)}\leq \gamma_{k}^{r-1} \left\| {x_{k}}-y\right\|^2 \nonumber\\
			& -\gamma_{k}^{r-1}\left\| x_{k+1} -  y \right\|^2     +\gamma_{k}^{r+1}\left(C_F +\eta_{{k}}C_f\right)^2.
	\end{align}
\end{lemma}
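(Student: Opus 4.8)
The plan is to derive the inequality from a single step of the incremental update by telescoping across the $m$ agents within iteration $k$, and then to exploit monotonicity of each $F_i$ together with convexity of each $f_i$.

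First I would start from the nonexpansivity of the projection applied to \eqref{eqn:alg_step_1}: for any $y \in X$,
\begin{align*}
\|x_{k,i+1}-y\|^2 \leq \|x_{k,i}-y\|^2 - 2\gamma_k\left(F_i(x_{k,i})+\eta_k\tilde\nabla f_i(x_{k,i})\right)^T(x_{k,i}-y) + \gamma_k^2\|F_i(x_{k,i})+\eta_k\tilde\nabla f_i(x_{k,i})\|^2.
\end{align*}
Summing over $i=1,\dots,m$, the left and right squared-distance terms telescope (using $x_{k,1}=x_k$ and $x_{k,m+1}=x_{k+1}$ from Definition \ref{def:convenient_notation}), the last term is bounded using $\|F_i(x_{k,i})+\eta_k\tilde\nabla f_i(x_{k,i})\| \leq \frac{C_F+\eta_k C_f}{m}$ and the fact that $\sum_{i=1}^m \frac{1}{m^2}=\frac{1}{m}\leq 1$, which yields the $\gamma_k^2(C_F+\eta_k C_f)^2$ contribution. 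The crux is then to replace the cross-term $\sum_{i=1}^m \left(F_i(x_{k,i})+\eta_k\tilde\nabla f_i(x_{k,i})\right)^T(x_{k,i}-y)$, which is evaluated at the \emph{different} agent iterates $x_{k,i}$, by a clean expression evaluated at $x_k$ and $y$.

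The key steps in that replacement: (i) for the objective part, use subgradient convexity, $\tilde\nabla f_i(x_{k,i})^T(x_{k,i}-y) \geq f_i(x_{k,i})-f_i(y)$, and then handle the mismatch $f_i(x_{k,i})$ versus $f_i(x_k)$ via Lipschitz continuity of $f_i$ (Remark \ref{rem:lipschitz_parameters}) together with the bound $\|x_k-x_{k,i}\|\leq \frac{(i-1)\gamma_k(C_F+\eta_k C_f)}{m}$ from Lemma \ref{lem:neighbor_agents}(a); (ii) for the mapping part, write $F_i(x_{k,i})^T(x_{k,i}-y) = F_i(x_{k,i})^T(x_{k,i}-y)$ and use monotonicity of $F_i$, namely $\left(F_i(x_{k,i})-F_i(y)\right)^T(x_{k,i}-y)\geq 0$, to lower bound it by $F_i(y)^T(x_{k,i}-y)$, and then again absorb the discrepancy $F_i(y)^T(x_{k,i}-x_k)$ using $\|F_i(y)\|\leq \frac{C_F}{m}$ and Lemma \ref{lem:neighbor_agents}(a). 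Summing over $i$ gives, up to error terms that are $\mathcal{O}(\gamma_k^2)$-order, the quantity $\eta_k(f(x_k)-f(y)) + F(y)^T(x_k-y)$. The remaining bookkeeping is to check that all the collected error terms, each of the form (constant)$\cdot\frac{(i-1)\gamma_k^2(C_F+\eta_k C_f)^2}{m}$ or similar, sum to something dominated by $\gamma_k^2(C_F+\eta_k C_f)^2$ after summing $\sum_{i=1}^m (i-1) = \frac{m(m-1)}{2}$ and using the crude bound that keeps the stated constant; finally I multiply the whole inequality through by $\gamma_k^{r-1}$ to match the weighting in \eqref{eq_36}.

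I expect the main obstacle to be the careful accounting of the consensus-type error terms arising from evaluating gradients and mappings at $x_{k,i}$ rather than at the common point $x_k$: one must verify that the accumulated constants indeed collapse into the single clean bound $\gamma_k^{r+1}(C_F+\eta_k C_f)^2$ claimed on the right-hand side, which likely requires a slightly generous (but still valid) grouping of terms rather than the tightest possible estimate. A secondary subtlety is making sure monotonicity is applied in the correct direction (lower-bounding $F_i(x_{k,i})^T(x_{k,i}-y)$ by $F_i(y)^T(x_{k,i}-y)$, not the reverse), since the final inequality is an upper bound on $2\gamma_k^r(\eta_k(f(x_k)-f(y))+F(y)^T(x_k-y))$ and all substitutions must point that way.
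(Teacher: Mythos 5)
Your outline follows the paper's own route step for step: nonexpansivity of the projection, monotonicity of $F_i$ plus convexity of $f_i$ to pass the cross term to $F_i(y)$ and $f_i(y)-f_i(x_{k,i})$, Cauchy--Schwarz/Lipschitz bounds together with Lemma \ref{lem:neighbor_agents}(a) to absorb the $x_{k,i}$-versus-$x_k$ mismatch, telescoping over $i\in[m]$ via Definition \ref{def:convenient_notation}, and finally multiplying by $\gamma_k^{r-1}$.

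The one place your bookkeeping as written would not deliver the stated constant is the early relaxation $\sum_{i=1}^m \tfrac1{m^2}=\tfrac1m\leq 1$: if you spend the whole budget $\gamma_k^2(C_F+\eta_kC_f)^2$ on the squared-norm term, there is nothing left for the consensus errors, and you would end up with roughly twice the claimed bound. In fact there is no room for a ``slightly generous grouping'': the squared terms contribute exactly $\tfrac{\gamma_k^2(C_F+\eta_kC_f)^2}{m}$, while the consensus terms contribute $\tfrac{2\gamma_k(C_F+\eta_kC_f)}{m}\sum_{i=1}^m\|x_k-x_{k,i}\|\leq \gamma_k^2(C_F+\eta_kC_f)^2\tfrac{m-1}{m}$ (using $\sum_{i=1}^m(i-1)=\tfrac{m(m-1)}{2}$), and the two fractions $\tfrac1m+\tfrac{m-1}{m}$ sum to exactly $1$. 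So keep the $\tfrac1m$ factor until the very end; with that correction your argument is exactly the paper's proof.
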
}
\begin{proof}
	Let $y\in X$ be an arbitrary vector and $k\geq 0$ be fixed. From the update rule \eqref{eqn:alg_step_1}, for {$i \in  [m]$} we have
	\begin{align*}
	&\left\| x_{k,{i+1}} -  y \right\|^2 \\
	&=\left\| \mathcal{P}_X\left({x_{k,i}}-\gamma_k\left( F_i\left(x_{k,i}\right)    + {\eta_{k}} \tilde{\nabla} f_{i}\left( x_{k,i}\right)\right)\right)- \mathcal{P}_X (y) \right\|^2.
	\end{align*}
	{Employing the nonexpansivity of the projection we have}
	\begin{align*}
	&\left\| x_{k,{i+1}} -  y \right\|^2  \leq \left\| {x_{k,i}}-\gamma_k\left(F_i\left(x_{k,i}\right)+ {\eta_{k}} \tilde{\nabla} f_{i}\left( x_{k,i}\right)\right)-  y \right\|^2\\
&	=  \left\| {x_{k,i}}-y\right\|^2  + \gamma_k^2\left\|F_i\left(x_{k,i}\right)+ {\eta_{k}} \tilde{\nabla} f_{i}\left( x_{k,i}\right) \right\|^2  \\&-  2\gamma_{k}\left(F_i\left(x_{k,i}\right)+ {\eta_{k}} \tilde{\nabla} f_{i}\left( x_{k,i}\right)\right)^T\left(x_{k,{i}} -  y \right).
	\end{align*}
	From the triangle inequality and recalling the  bounds on $\tilde\nabla f_i(x)$ and $F_i(x)$, we obtain
	\begin{align}\label{eqn:lemma2_1}
		&\left\| x_{k,{i+1}} -  y \right\|^2 \leq  \left\| {x_{k,i}}-y\right\|^2 + \gamma_{k}^2\left(\mytfrac{C_F + \eta_{{k}}C_f}{m}\right)^2\nonumber\\ 
		&+ 2\gamma_{k}\left(F_i\left(x_{k,i}\right)+ {\eta_{k}} \tilde{\nabla} f_{i}\left( x_{k,i}\right)\right)^T\left(y-x_{k,{i}}  \right).
	\end{align}
	{The last term in the preceding relation is  bounded as follows}
	\begin{align*}
&2\gamma_{k}\left(F_i\left(x_{k,i}\right)+ {\eta_{k}} \tilde{\nabla} f_{i}\left( x_{k,i}\right)\right)^T\left(y-x_{k,{i}}  \right)  \\
&= 2\gamma_{k}F_i\left(x_{k,i}\right)^T\left(y-x_{k,{i}}  \right)+2\gamma_{k} {\eta_{k}} \tilde{\nabla} f_{i}\left( x_{k,i}\right)^T\left(y-x_{k,{i}}  \right)\\
& \leq 2\gamma_{k}F_i\left(y\right)^T\left(y-x_{k,{i}}  \right)+2\gamma_{k} {\eta_{k}}\left( f_{i}\left( y\right)- f_{i}\left(x_{k,i}\right)\right),
	\end{align*}
where the last inequality is implied from the monotonicity of  $F_i$ and convexity of $f_i$. {Combining with equation \eqref{eqn:lemma2_1} we have}
	\begin{align*}
	&\left\| x_{k,{i+1}} -  y \right\|^2   \leq \left\| {x_{k,i}}-y\right\|^2 + \gamma_{k}^2\left(\mytfrac{C_F + \eta_{{k}}C_f}{m}\right)^2\\
	&+ 2\gamma_{k}F_i\left(y\right)^T\left(y-x_{k,{i}}  \right)+2\gamma_{k} {\eta_{k}}\left( f_{i}\left( y\right)- f_{i}\left(x_{k,i}\right)\right).
	\end{align*}
Adding and subtracting $2\gamma_{k}F_i\left(y\right)^Tx_{k}+2\gamma_{k} {\eta_{k}} f_{i}\left( x_{k}\right)$ we {get}
	\begin{align*}
	&\left\| x_{k,{i+1}} -  y \right\|^2   
	\leq  \left\| {x_{k,i}}-y\right\|^2 + \gamma_{k}^2\left(\mytfrac{C_F + \eta_{{k}}C_f}{m}\right)^2\\
	&+ 2\gamma_{k}F_i\left(y\right)^T\left(y-x_{k}  \right)+2\gamma_{k} {\eta_{k}}\left( f_{i}\left( y\right)- f_{i}\left( x_{k}\right)  \right)\\
	& + 2\gamma_{k}\left( \left|F_i\left(y\right)^T\left(x_{k}-x_{k,i}  \right)\right| + {\eta_{k}} \left| f_{i}\left( x_k\right)- f_{i}\left( x_{k,i}\right)\right|\right).
	\end{align*}	
	Using the Cauchy-Schwarz inequality and Remark \ref{rem:lipschitz_parameters} we obtain
	\begin{align*}
	&\left\| x_{k,{i+1}} -  y \right\|^2 \leq  \left\| {x_{k,i}}-y\right\|^2 + \gamma_{k}^2\left(\mytfrac{C_F + \eta_{{k}}C_f}{m}\right)^2\\
	&+ 2\gamma_{k}F_i\left(y\right)^T\left(y-x_{k}  \right)+2\gamma_{k} {\eta_{k}}\left( f_{i}\left( y\right)- f_{i}\left( x_{k}\right)  \right) \\
	&+ 2\gamma_{k}\left(\mytfrac{C_F}{m}\left\|x_{k}-x_{k,i}  \right\|+\mytfrac{ {\eta_{k}}C_f}{m}\left\|x_k-  x_{k,i}\right\|\right).
	\end{align*}
	{Summing over $i\in[m]$} and considering Definition  \ref{def:convenient_notation} we have
	\begin{align*}
	&\left\| x_{k+1} -  y \right\|^2  \leq  \left\| {x_{k}}-y\right\|^2 {+ \mytfrac{\gamma_{k}^2\left(C_F + \eta_{{k}}C_f\right)^2}{m}}\\
	&+ 2\gamma_{k}F\left(y\right)^T\left(y-x_{k}  \right)+2\gamma_{k} {\eta_{k}}\left( f\left( y\right)- f\left( x_{k}\right)  \right)\\
	&+ \mytfrac{2\gamma_{k}\left(C_F+{\eta_{k}}C_f\right)}{m}\textstyle\sum_{i = 1}^m \left\|x_{k}-x_{k,i}  \right\|{.}
	\end{align*}
From Lemma \ref{lem:neighbor_agents} we obtain
	\begin{align*}
		&\left\| x_{k+1} -y \right\|^2 \leq\left\| {x_{k}}-y\right\|^2 + {\mytfrac{\gamma_{k}^2\left(C_F + \eta_{{k}}C_f\right)^2}{m}}  \\ &+2\gamma_kF(y)^T\left(y-x_{k}\right) + 2\gamma_{k}\eta_{k}\left(f(y) - f(x_{k})\right)  \\
		&+ \mytfrac{2 \gamma_{k}\left(C_F +\eta_{k}C_f\right)}{m}\textstyle\sum_{i=1}^m \mytfrac{(i-1)\gamma_{k}\left(C_F + \eta_k C_f\right)}{m}   \\
	    &=  \left\| {x_{k}}-y\right\|^2 + \gamma_{k}^2\left(C_F+\eta_{{k}}C_f\right)^2 +2\gamma_kF(y)^T\left(y-x_{k}\right)\\
	    &+  2\gamma_{k}\eta_{k}\left(f(y) - f(x_{k})\right).
		\end{align*}
Multiplying the both sides by $\gamma_{k}^{r-1}$ {we can obtain} the result.
\end{proof}
In the next result we provide inequalities that relate the objective function and the dual gap function at the generated averaged iterate of the last agent with that of any other agent, respectively. This result will be utilized in Proposition \ref{prop:error_bound_aIR-IG}.
\begin{lemma}\label{lem:consensus_error_bound}
Consider problem \eqref{prob:initial_problem} and the sequences $\{\bar{x}_{N,i}\}$ generated in Algorithm \ref{alg:IR-IG_avg} for {$i \in [m]$} for some $N\geq 1$. Let Assumption \ref{assum:initial_prob} hold {and let $\{\gamma_k\}$ and $\{\eta_k\}$} be strictly positive {and} nonincreasing sequences. Then for any {$i \in [m]$} we have 
\begin{subequations}
	\begin{align}
		&f(\bar x_{N,i}) - f(\bar x_{N,m}) \leq C_f\lambda_{0,N}\left\|\bar   {x}_{0,i} -\bar   {x}_{0,m}\right\|\nonumber\\
		&+ \mytfrac{(m-i)C_f\left({C_F} + \eta_0 C_f\right)}{m}\textstyle\sum_{{k=0}}^{N} \lambda_{k,N}\gamma_{k},\label{eqn:consensus_error_bound_f} \\
		&\mathrm{GAP}\left(\bar x_{N,i}\right) - \mathrm{GAP}(\bar x_{N,m})  \leq {C_F}\lambda_{0,N}\left\| \bar  {x}_{0,i} -\bar  {x}_{0,m}\right\|\nonumber\\
		&+ \mytfrac{(m-i){C_F}\left({C_F} + \eta_0 C_f\right)}{m}\textstyle\sum_{{k=0}}^{N} \lambda_{k,N}\gamma_{k},\label{eqn:consensus_error_bound_GAP}
	\end{align}
\end{subequations}
where $\lambda_{k,N} \triangleq \tfrac{\gamma_k^r}{\sum_{j = 0}^{N} \gamma_j^r }$ for {$k \in \{0,\ldots,N\}$}.
\end{lemma}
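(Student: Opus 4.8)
The plan is to reduce both inequalities to a single bound on $\|\bar x_{N,i}-\bar x_{N,m}\|$ via Lipschitz continuity, and then to control this distance using Lemmas \ref{lem:averaging_seq} and \ref{lem:neighbor_agents}(b). Recall from Remark \ref{rem:lipschitz_parameters} that $f$ is $C_f$-Lipschitz on $X$, and from Lemma \ref{lem:averaging_seq} that $\bar x_{N,i},\bar x_{N,m}\in X$; hence $f(\bar x_{N,i})-f(\bar x_{N,m})\le C_f\|\bar x_{N,i}-\bar x_{N,m}\|$. For the dual gap function I would first establish that $\mathrm{GAP}$ is $C_F$-Lipschitz on $X$: since $\|F(y)\|=\|\sum_{i=1}^m F_i(y)\|\le\sum_{i=1}^m\|F_i(y)\|\le C_F$ for every $y\in X$ and $X$ is compact with $F$ continuous, the supremum defining $\mathrm{GAP}(x)=\sup_{y\in X}F(y)^T(x-y)$ is attained at some $y^\star(x)\in X$, and for any $x_1,x_2\in X$,
\[
\mathrm{GAP}(x_1)-\mathrm{GAP}(x_2)\le F(y^\star(x_1))^T(x_1-y^\star(x_1))-F(y^\star(x_1))^T(x_2-y^\star(x_1))=F(y^\star(x_1))^T(x_1-x_2)\le C_F\|x_1-x_2\|.
\]
Thus $\mathrm{GAP}(\bar x_{N,i})-\mathrm{GAP}(\bar x_{N,m})\le C_F\|\bar x_{N,i}-\bar x_{N,m}\|$, and it only remains to estimate $\|\bar x_{N,i}-\bar x_{N,m}\|$.

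Next I would invoke Lemma \ref{lem:averaging_seq}, which expresses $\bar x_{N,i}$ and $\bar x_{N,m}$ as convex combinations with the \emph{common} weights $\lambda_{k,N}$, to write
\[
\bar x_{N,i}-\bar x_{N,m}=\lambda_{0,N}(\bar x_{0,i}-\bar x_{0,m})+\sum_{k=1}^{N}\lambda_{k,N}\bigl(x_{k-1,i+1}-x_{k-1,m+1}\bigr).
\]
Since $x_{k-1,m+1}=x_k$ by Definition \ref{def:convenient_notation}, Lemma \ref{lem:neighbor_agents}(b) applied with $k$ replaced by $k-1$ gives $\|x_{k-1,i+1}-x_{k-1,m+1}\|\le\frac{(m-i)\gamma_{k-1}(C_F+\eta_{k-1}C_f)}{m}$ for every $k\ge1$. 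The triangle inequality then yields
\[
\|\bar x_{N,i}-\bar x_{N,m}\|\le\lambda_{0,N}\|\bar x_{0,i}-\bar x_{0,m}\|+\frac{m-i}{m}\sum_{k=1}^{N}\lambda_{k,N}\gamma_{k-1}(C_F+\eta_{k-1}C_f).
\]

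Finally, I would simplify the last sum using monotonicity. Because $\{\eta_k\}$ is nonincreasing, $\eta_{k-1}\le\eta_0$; because $\{\gamma_k\}$ is nonincreasing and $r\ge0$, $\gamma_k^r\le\gamma_{k-1}^r$, hence $\lambda_{k,N}\gamma_{k-1}\le\lambda_{k-1,N}\gamma_{k-1}$, so that $\sum_{k=1}^{N}\lambda_{k,N}\gamma_{k-1}\le\sum_{k=1}^{N}\lambda_{k-1,N}\gamma_{k-1}=\sum_{k=0}^{N-1}\lambda_{k,N}\gamma_k\le\sum_{k=0}^{N}\lambda_{k,N}\gamma_k$. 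Substituting these into the previous display gives $\|\bar x_{N,i}-\bar x_{N,m}\|\le\lambda_{0,N}\|\bar x_{0,i}-\bar x_{0,m}\|+\frac{(m-i)(C_F+\eta_0C_f)}{m}\sum_{k=0}^{N}\lambda_{k,N}\gamma_k$, and multiplying by $C_f$ and $C_F$ respectively produces \eqref{eqn:consensus_error_bound_f} and \eqref{eqn:consensus_error_bound_GAP}. No step is genuinely deep; the only points demanding care are the Lipschitz estimate for $\mathrm{GAP}$ (which relies on compactness of $X$ to ensure the defining supremum is attained, together with the uniform bound $\|F(y)\|\le C_F$) and the combined index-shift-and-monotonicity bookkeeping that turns $\sum_{k\ge1}\lambda_{k,N}\gamma_{k-1}(C_F+\eta_{k-1}C_f)$ into $(C_F+\eta_0C_f)\sum_{k\ge0}\lambda_{k,N}\gamma_k$.
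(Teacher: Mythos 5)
Your proposal is correct and follows essentially the same route as the paper: reduce both bounds to $C_f$ (resp. $C_F$) times $\|\bar x_{N,i}-\bar x_{N,m}\|$, expand this via the common weights from Lemma \ref{lem:averaging_seq}, bound each $\|x_{k-1,i+1}-x_{k-1,m+1}\|$ by Lemma \ref{lem:neighbor_agents}(b), and use monotonicity of $\{\gamma_k\}$, $\{\eta_k\}$ with an index shift to reach $\sum_{k=0}^N\lambda_{k,N}\gamma_k$. The only cosmetic deviation is your Lipschitz estimate for $\mathrm{GAP}$ via an attained maximizer (the paper uses subadditivity of the supremum instead, which avoids invoking attainment); both are valid under Assumption \ref{assum:initial_prob}.
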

\begin{proof} {Note that the results are trivial when $m=1$. Throughout, we assume that $m\geq 2$. From the Lipschitz continuity of function $f$ from Remark \ref{rem:lipschitz_parameters} and invoking Lemma \ref{lem:averaging_seq},  we can write the following for all {$i \in [m]$}.
{\begin{align}\label{eqn:consensus_error_begin}
		&f(\bar x_{N,i}) - f(\bar x_{N,m}) \leq C_f\lambda_{0,N}\left\| \bar {x}_{0,i} -\bar  {x}_{0,m}\right\| \nonumber\\
	&+C_f \textstyle\sum_{k=1}^{N} \lambda_{k,N} \left\| x_{k-1,i+1} -  x_{k-1,m+1} \right\|.
\end{align}}
Next, using Lemma~\ref{lem:neighbor_agents}(b) for any $k\geq 1$ and {$i\in[m]$} we have
\begin{align}\label{eqn:induction_bound_consensus}
	\left\| x_{k-1,i+1} -  x_{k-1,m+1} \right\| \leq  \mytfrac{(m-i){\gamma_{k-1}}\left({C_F} + {\eta_{k-1}} C_f\right)}{m}. 
	\end{align}
From \eqref{eqn:consensus_error_begin},  \eqref{eqn:induction_bound_consensus}, and the nonincreasing sequence $\{\eta_k\}$, we have
\begin{align*}
	f(\bar x_{N,i}) - f(\bar x_{N,m})  
	& \leq  \mytfrac{(m-i)C_f\left({C_F} + {\eta_0} C_f\right)}{m}\sum_{k=1}^{N} \lambda_{k,N} {\gamma_{k-1}} \\& + C_f\lambda_{0,N}\left\|  \bar {x}_{0,i} -  \bar {x}_{0,m}\right\|.
\end{align*}
{Since $\{\gamma_k\}$ is nonincreasing  and $0\leq r <1$, we obtain}
\begin{align*}
&\textstyle\sum_{k=1}^{N} \lambda_{k,N} {\gamma_{k-1}}  \leq \mytfrac{1}{\textstyle\sum_{j=0}^N \gamma_j^r}\textstyle\sum_{k=1}^{N}  {\gamma_{k-1}^{r+1}}  \\& 
\leq \mytfrac{1}{\textstyle\sum_{j=0}^N \gamma_j^r}\textstyle\sum_{k=0}^{N}  {\gamma_{k}^{r+1}} =\textstyle\sum_{k=0}^{N} \lambda_{k,N} {\gamma_{k}}.
\end{align*}
From the last two relations we obtain equation \eqref{eqn:consensus_error_bound_f}. Next we show \eqref{eqn:consensus_error_bound_GAP}. From Definition \ref{def:gap} we have
\begin{align*}
	&\mathrm{GAP}(\bar x_{N,i}) = \sup_{y \in X}F(y)^T\left(\bar x_{N,i} - y\right) \\
	& = \sup_{y \in X} F(y)^T\left(\bar x_{N,i} + \bar x_{N,m} -\bar x_{N,m} - y\right)	\\
	& \leq\sup_{y \in X} F(y)^T\left(\bar x_{N,i} - \bar x_{N,m}\right) + \sup_{y \in X}  F(y)^T\left( \bar x_{N,m} - y\right)\\
	& \leq {C_F} \left\|\bar x_{N,i} - \bar x_{N,m}\right\| + \mathrm{GAP}\left( \bar x_{N,m} \right),
\end{align*}
Rearranging the terms we obtain $\mathrm{GAP}(\bar x_{N,i}) -  \mathrm{GAP}\left( \bar x_{N,m} \right) \leq {C_F} \left\|\bar x_{N,i} - \bar x_{N,m}\right\|$. The rest of the proof can be done in a similar fashion to the proof of \eqref{eqn:consensus_error_bound_f}.}
\end{proof}
{Next we construct agent-wise error bounds in terms of the {objective function value} and the  dual gap function at the averaged iterates generated in Algorithm \ref{alg:IR-IG_avg}.
\begin{proposition}[Agent-wise error bounds]\label{prop:error_bound_aIR-IG}
	Consider problem \eqref{prob:initial_problem} and  the averaged sequence $\{\bar{x}_{k,i}\}$ generated by agent $i$ in Algorithm \ref{alg:IR-IG_avg} for {$i \in [m]$}. Let Assumption \ref{assum:initial_prob} hold and $\{\gamma_{k}\}$ and $\{\eta_{k}\}$ be nonincreasing and strictly positive sequences. {Then we have    for  $i \in [m]$,  $N\geq 1$, and $r \in [0,1)$:}
\begingroup
\allowdisplaybreaks
\begin{align*}
& (a)\ f(\bar{x}_{N,i}) -f^*\leq\left({\textstyle\sum_{k = 0}^{N} \gamma_{k}^r}\right)^{-1}\left(\mytfrac{2M_X^2\gamma_{N}^{r-1}}{\eta_{N}}  \right. \\ &  \left.  +  {\mytfrac{\left({C_F}+\eta_{0}C_f\right)^2}{2}} \textstyle\sum_{k = 0}^{N}\mytfrac{\gamma_{k}^{r+1}}{\eta_k} + \mytfrac{(m-i)C_f\left(C_F + \eta_0 C_f\right)}{m}\textstyle\sum_{k=0}^{N}\gamma_{k}^{r+1}\right.\\
&\left.+{\gamma_{0}^rf(\bar  x_{0,m}) - \gamma_{0}^rf(x_{0,1})}+{C_f\gamma_{{0}}^r\left\| \bar {x}_{0,i} -\bar  {x}_{0,m}\right\|} \right).\\
&(b)\ \mathrm{GAP}(\bar{x}_{N,i})  \leq\left(\textstyle\sum_{k = 0}^{N} \gamma_{k}^r\right)^{-1}\left(2M_X^2  \gamma_{N}^{r-1} +2M_f\textstyle\sum_{k = 0}^{N}\gamma_{k}^r\eta_{k}\right.\\
&\left. + { \mytfrac{\left(C_F+C_f\eta_{{0}}\right)^2}{2}\textstyle\sum_{k=0}^{N}\gamma_{k}^{r+1}} + \mytfrac{(m-i)C_F\left(C_F + \eta_0 C_f\right)}{m}\textstyle\sum_{k=0}^{N}\gamma_{k}^{r+1}\right.\\
&\left.+\gamma_0^rC_F\|\bar x_{0,m}-x_{0,1} \|+{C_F\gamma_{{0}}^r\left\| \bar {x}_{0,i} -\bar  {x}_{0,m}\right\|} \right).
\end{align*}
\endgroup
\end{proposition}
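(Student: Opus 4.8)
The strategy is to first prove both estimates for the last agent's averaged iterate $\bar x_{N,m}$, using the one-step inequality of Lemma~\ref{lem:pseudo_bound_sequence} together with the averaging identity of Lemma~\ref{lem:averaging_seq}, and then transfer each bound to an arbitrary agent $i\in[m]$ via the consensus comparison in Lemma~\ref{lem:consensus_error_bound}.

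\emph{Part (a).} Let $x^*$ be an optimal solution of \eqref{prob:initial_problem}, so that $x^*\in\text{SOL}(X,F)$ and $f(x^*)=f^*$. Taking $y=x^*$ in \eqref{eq_36}: since $x^*$ solves $\text{VI}(X,F)$ and $x_k\in X$, we have $F(x^*)^T(x_k-x^*)\ge0$, hence this term may be dropped from the left-hand side; dividing through by $\eta_k>0$ gives $2\gamma_k^r(f(x_k)-f^*)\le \tfrac{\gamma_k^{r-1}}{\eta_k}\|x_k-x^*\|^2-\tfrac{\gamma_k^{r-1}}{\eta_k}\|x_{k+1}-x^*\|^2+\tfrac{\gamma_k^{r+1}}{\eta_k}(C_F+\eta_kC_f)^2$. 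Summing over $k=0,\dots,N$, the telescoped residual is controlled by summation by parts: the sequence $\{\gamma_k^{r-1}/\eta_k\}$ is nondecreasing (because $r-1<0$, $\{\gamma_k\}$ is nonincreasing and $\{1/\eta_k\}$ is nondecreasing) and $\|x_k-x^*\|^2\le4M_X^2$ by compactness of $X$, so $\sum_{k=0}^{N}\tfrac{\gamma_k^{r-1}}{\eta_k}\big(\|x_k-x^*\|^2-\|x_{k+1}-x^*\|^2\big)\le 4M_X^2\tfrac{\gamma_N^{r-1}}{\eta_N}$; the remaining term uses $\eta_k\le\eta_0$. Dividing by $2$ yields $\sum_{k=0}^{N}\gamma_k^r(f(x_k)-f^*)\le 2M_X^2\tfrac{\gamma_N^{r-1}}{\eta_N}+\tfrac{(C_F+\eta_0C_f)^2}{2}\sum_{k=0}^{N}\tfrac{\gamma_k^{r+1}}{\eta_k}$. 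On the other hand, by Lemma~\ref{lem:averaging_seq} and Definition~\ref{def:convenient_notation}, $\bar x_{N,m}=\lambda_{0,N}\bar x_{0,m}+\sum_{k=1}^{N}\lambda_{k,N}x_k$; convexity of $f$, inserting and subtracting the missing $k=0$ term (recalling $x_0=x_{0,1}$), and $\lambda_{k,N}=\gamma_k^r/\sum_{j=0}^{N}\gamma_j^r$ give $f(\bar x_{N,m})-f^*\le\big(\sum_{j=0}^{N}\gamma_j^r\big)^{-1}\big(\gamma_0^r(f(\bar x_{0,m})-f(x_{0,1}))+\sum_{k=0}^{N}\gamma_k^r(f(x_k)-f^*)\big)$. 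Substituting the previous bound and then adding \eqref{eqn:consensus_error_bound_f} — after writing $\lambda_{0,N}\|\bar x_{0,i}-\bar x_{0,m}\|$ and $\sum_k\lambda_{k,N}\gamma_k$ over the common prefactor $(\sum_{j=0}^{N}\gamma_j^r)^{-1}$ — collects exactly the terms in the claimed bound for $f(\bar x_{N,i})-f^*$.

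\emph{Part (b).} Here $y\in X$ is kept arbitrary in \eqref{eq_36}. Rather than dividing by $\eta_k$, I would move the objective term to the right-hand side and bound it below using $\eta_k(f(x_k)-f(y))\ge-2M_f\eta_k$ (since $|f|\le M_f$ on $X$), which isolates $2\gamma_k^r F(y)^T(x_k-y)\le\gamma_k^{r-1}\|x_k-y\|^2-\gamma_k^{r-1}\|x_{k+1}-y\|^2+\gamma_k^{r+1}(C_F+\eta_kC_f)^2+4M_f\gamma_k^r\eta_k$. Summing over $k$, applying the same summation-by-parts bound (now for the nondecreasing sequence $\{\gamma_k^{r-1}\}$ with $\|x_k-y\|^2\le4M_X^2$) and $\eta_k\le\eta_0$, and dividing by $2$, I get $\sum_{k=0}^{N}\gamma_k^rF(y)^T(x_k-y)\le 2M_X^2\gamma_N^{r-1}+2M_f\sum_{k=0}^{N}\gamma_k^r\eta_k+\tfrac{(C_F+C_f\eta_0)^2}{2}\sum_{k=0}^{N}\gamma_k^{r+1}$. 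Using the averaging identity and the affinity of $z\mapsto F(y)^T(z-y)$, $F(y)^T(\bar x_{N,m}-y)=\lambda_{0,N}F(y)^T(\bar x_{0,m}-x_{0,1})+\sum_{k=0}^{N}\lambda_{k,N}F(y)^T(x_k-y)$, where the first term is at most $\lambda_{0,N}C_F\|\bar x_{0,m}-x_{0,1}\|$ by the Cauchy-Schwarz inequality and $\|F(y)\|\le C_F$. Since all resulting upper bounds are independent of $y$, taking $\sup_{y\in X}$ gives the stated bound on $\mathrm{GAP}(\bar x_{N,m})$; adding \eqref{eqn:consensus_error_bound_GAP} (again rewriting its terms over the common prefactor) produces the claimed bound for $\mathrm{GAP}(\bar x_{N,i})$.

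\emph{Main obstacle.} The delicate part is the index bookkeeping between the averaging and the summation: $\bar x_{N,m}$ is a convex combination of $\bar x_{0,m}$ and $x_1,\dots,x_N$ (since $x_{k-1,m+1}=x_k$), whereas the summed form of Lemma~\ref{lem:pseudo_bound_sequence} runs over $x_0,\dots,x_N$; reconciling them forces the insertion/removal of the $k=0$ term, which is precisely what generates the correction terms $\gamma_0^r\big(f(\bar x_{0,m})-f(x_{0,1})\big)$ and $\gamma_0^rC_F\|\bar x_{0,m}-x_{0,1}\|$. The summation-by-parts step for the telescoped residual with the nonconstant weights $\gamma_k^{r-1}/\eta_k$ (resp.\ $\gamma_k^{r-1}$) is the other point requiring care, but it is routine once monotonicity of that weight sequence is observed.
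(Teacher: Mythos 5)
Your proof is correct and follows essentially the same route as the paper: substitute $y=x^*$ (resp.\ keep $y$ arbitrary) in the inequality of Lemma~\ref{lem:pseudo_bound_sequence}, control the telescoped terms via $4M_X^2$ and monotonicity of the weights $\gamma_k^{r-1}/\eta_k$ (resp.\ $\gamma_k^{r-1}$), pass to $\bar x_{N,m}$ through Lemma~\ref{lem:averaging_seq} using convexity of $f$ (resp.\ affinity of $z\mapsto F(y)^T(z-y)$ and a supremum over $y$), and transfer to agent $i$ via Lemma~\ref{lem:consensus_error_bound}. The only differences are cosmetic: the paper introduces the corrections $\gamma_0^r(f(\bar x_{0,m})-f(x_{0,1}))$ and $\gamma_0^r C_F\|\bar x_{0,m}-x_{0,1}\|$ by rewriting the $k=0$ inequality before summing and telescopes by adding/subtracting the $(k-1)$-weighted term, whereas you sum uniformly over $k=0,\dots,N$ with a summation-by-parts bound and insert the $k=0$ correction at the averaging step, which yields the identical terms.
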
}
\begin{proof}
		{(a) Let $x^* \in X$ denote an arbitrary optimal solution to problem \eqref{prob:initial_problem}. From feasibility of $x^*$ we have  $F(x^*)\left(x_k-x^*\right)\geq 0$. Substituting $y$ by $x^*$ in relation \eqref{eq_36} and using the preceding relation we have
		\begin{align*}
			2\gamma_{k}^r\eta_{k}\left(f(x_{k}) -f^* \right)  &\leq  \gamma_{k}^{r-1} \left\| {x_{k}}-x^*\right\|^2 -\gamma_{k}^{r-1}\left\| x_{k+1} -  x^* \right\|^2 \\&+  {\gamma_{k}^{r+1}\left(C_F +\eta_{{k}}C_f\right)^2} .
		\end{align*}
	Dividing both sides by $2\eta_k$ we have
		\begin{align}\label{eq_38}
			 \gamma_{k}^r\left(f(x_{k}) -f^* \right)& \nonumber \leq    \mytfrac{\gamma_{k}^{r-1}}{2\eta_{{k}}}\left( \left\| {x_{k}}-x^*\right\|^2  - \left\| x_{k+1} -  x^* \right\|^2\right) \\& +{ \mytfrac{\gamma_{k}^{r+1}}{2\eta_k}\left({C_F}+\eta_{k}C_f\right)^2}.
		\end{align}
		Adding and subtracting {the term} $\mytfrac{\gamma_{k-1}^{r-1}}{2\eta_{k-1}}\|x_k-x^*\|^2$ we have
		\begin{align}\label{eqn:prop_1_1}
			&\gamma_{k}^r\left(f(x_{k}) -f^* \right) \leq  \mytfrac{\gamma_{k-1}^{r-1}}{2\eta_{k-1}}\|x_k-x^*\|^2\nonumber -\mytfrac{\gamma_{k}^{r-1}}{2\eta_{{k}}}\left\| x_{k+1} -  x^* \right\|^2 \\ 
			&\hspace{-0.25cm} + \underbrace{\left(\mytfrac{\gamma_{k}^{r-1}}{2\eta_{k}} - \mytfrac{\gamma_{k-1}^{r-1}}{2\eta_{k-1}}\right)}_{\text{term 1}}\|x_k-x^*\|^2 + { \mytfrac{\gamma_{k}^{r+1}}{2\eta_k}\left({C_F}+\eta_{k}C_f\right)^2}.
		\end{align}
		Recalling {the definition of scalar $M_X$} we have
		\begin{align}\label{eq:bound_X}
			\|x_k-x^*\|^2 \leq 2\|x_k\|^2 + 2\|x^*\|^2  \leq 4M_X^2. 
		\end{align}
		Taking into account $r < 1$,  the nonincreasing property of
		the sequences $\{\gamma_k\}$ and $\{\eta_k\}$, we have: term 1 $\geq 0 $. Using \eqref{eq:bound_X} and taking summation from \eqref{eqn:prop_1_1} over {$k \in [N]$},  we obtain
		\begin{align}\label{eq_39}
			&\sum_{k = 1}^{N} \gamma_{k}^r\left(f(x_{k}) -f^* \right)\leq  \mytfrac{\gamma_{0}^{r-1}}{2\eta_{0}}\|x_1-x^*\|^2-\mytfrac{\gamma_{N}^{r-1}}{2\eta_{N}}\|x_{N+1}-x^*\|^2\nonumber\\ 
			&+\left(\mytfrac{\gamma_{N}^{r-1}}{2\eta_{N}}-\mytfrac{\gamma_{0}^{r-1}}{2\eta_{0}}\right)4M_X^2+{\mytfrac{\left({C_F}+\eta_{0}C_f\right)^2}{2} \sum_{k = 1}^{N}\mytfrac{\gamma_{k}^{r+1}}{\eta_k}}.
		\end{align}
		Rewriting equation \eqref{eq_38} for $k = 0$ and then, adding and subtracting $f(\bar x_{0,m})$, we have
		\begin{align*}
		&\gamma_{0}^r\left(f(\bar x_{0,m})  -f^* + f(x_{0}) - f(\bar x_{0,m}) \right)\leq    \mytfrac{\gamma_{0}^{r-1} \left\| {x_{0}}-x^*\right\|^2}{2\eta_{{0}}}  \\
		&-\mytfrac{\gamma_{0}^{r-1}\left\| x_{1} -  x^* \right\|^2}{2\eta_{{0}}}    + {\left({C_F}+\eta_{0}C_f\right)^2 \mytfrac{\gamma_{0}^{r+1}}{2\eta_0}}.
		\end{align*}
		Adding the preceding equation with \eqref{eq_39} we obtain
		\begin{align*}
		&\gamma_{0}^r\left(f(\bar x_{0,m})  -f^* \right)	+\sum_{k = 1}^{N} \gamma_{k}^r\left(f(x_{k}) -f^* \right)\leq  \mytfrac{\gamma_{0}^{r-1} \left\| {x_{0}}-x^*\right\|^2}{2\eta_{{0}}} \\
		&+ 2M_X^2\left(\mytfrac{\gamma_{N}^{r-1}}{\eta_{N}}-\mytfrac{\gamma_{0}^{r-1}}{\eta_{{0}}}\right) -\mytfrac{\gamma_{N}^{r-1}}{2\eta_{N}} \left\| x_{N+1} -  x^* \right\|^2  \\
		& +{\mytfrac{\left({C_F}+\eta_{0}C_f\right)^2}{2} \sum_{k = 0}^{N}\mytfrac{\gamma_{k}^{r+1}}{\eta_k}}+{\gamma_{0}^r\left(f(\bar x_{0,m})  -f(x_0) \right)}.
		\end{align*}
{From  \eqref{eq:bound_X}} and neglecting the nonpositive term we obtain
		\begin{align*}
			  &\gamma_{0}^r\left(f(\bar x_{0,m})  -f^* \right)	+\textstyle\sum_{k = 1}^{N} \gamma_{k}^r\left(f(x_{k}) -f^* \right)\leq  \mytfrac{2M_X^2\gamma_{N}^{r-1}}{\eta_{N}}  \\
			  & + {\mytfrac{\left({C_F}+\eta_{0}C_f\right)^2}{2} \textstyle\sum_{k =0}^{N}\mytfrac{\gamma_{k}^{r+1}}{\eta_k}}+{\gamma_{0}^r\left(f(\bar x_{0,m})  -f(x_0) \right)}.
		\end{align*}
		Next, dividing both sides by  $\sum_{k = 0}^{N} \gamma_{k}^r$ we have
		\begin{align*}
			&\mytfrac{{\gamma_{0}^r f(\bar x_{0,m}) }	+\textstyle\sum_{k = 1}^{N} \gamma_{k}^rf(x_{k}) }{\sum_{k = 0}^{N} \gamma_{k}^r} -f^*
			 \leq\left(\textstyle\sum_{k = 0}^{N} \gamma_{k}^r\right)^{-1}\left(\mytfrac{2M_X^2\gamma_{N}^{r-1}}{\eta_{N}}\right.\\
			&\left.+ \mytfrac{\left({C_F}+\eta_{0}C_f\right)^2}{2} \textstyle\sum_{k = 0}^{N}\mytfrac{\gamma_{k}^{r+1}}{\eta_k}  +{\gamma_{0}^r\left(f(\bar x_{0,m})  -f(x_0) \right)}\right).
		\end{align*}
Taking into account the convexity of  $f$ we have
\begin{align*}
f\left(\mytfrac{\gamma_{0}^r\bar x_{0,m} + \sum_{k = 1}^{N} \gamma_{k}^rx_{k-1,m+1}}{\sum_{k = 0}^{N} \gamma_{k}^r}\right)  \le\mytfrac{\gamma_{0}^rf(\bar x_{0,m}) + \sum_{k = 1}^{N} \gamma_{k}^rf(x_{k-1,m+1})}{\sum_{k = 0}^{N} \gamma_{k}^r}.
\end{align*} 
Invoking Lemma \ref{lem:averaging_seq}, from the preceding two relations we obtain
	\begin{align*}
		&f(\bar{x}_{N,m}) -f^* \leq   \left( \textstyle\sum_{k = 0}^{N} \gamma_{k}^r\right)^{-1}\left(\mytfrac{2M_X^2\gamma_{N}^{r-1}}{\eta_{N}}\right.\\
		&\left.+  {\mytfrac{\left({C_F}+\eta_{0}C_f\right)^2}{2}} \textstyle\sum_{k = 0}^{N}\mytfrac{\gamma_{k}^{r+1}}{\eta_k}   +{\gamma_{0}^rf( \bar x_{0,m}) - \gamma_{0}^rf(x_{0,1})}\right).
	\end{align*}
Adding equation \eqref{eqn:consensus_error_bound_f} with the preceding inequality we obtain the desired result. 
		
\noindent(b) From equation \eqref{eq_36}, for an arbitrary $y\in X$ we have
		\begin{align*}
		&2\gamma_k^rF(y)^T\left(x_{k}-y\right)   \leq   \gamma_{k}^{r-1} \left(\left\| {x_{k}}-y\right\|^2-\left\| x_{k+1} - y \right\|^2\right)\\
		& + 2\gamma_{k}^r\eta_{k}\left(f\left(y\right) - f(x_{k})  \right)+ { \gamma_{k}^{r+1}\left(C_F +\eta_{{k}}C_f\right)^2}.
		\end{align*}
From the triangle inequality and definition of $M_f$ we have $|f(y) - f(x_k)|\leq 2M_f$. We obtain:
		\begin{align}\label{eq39}
			&	2\gamma_k^rF(y)^T\left(x_{k}-y\right)   \leq  \gamma_{k}^{r-1} \left(\left\| {x_{k}}-y\right\|^2  -\left\| x_{k+1} - y \right\|^2\right)  \nonumber\\
				& + 4\gamma_{k}^r\eta_{k}M_f + \gamma_{k}^{r+1}\left(C_F +\eta_{{k}}C_f\right)^2.
		\end{align}
Adding and subtracting $\gamma_{k-1}^{r-1}\|x_k-y\|^2$, we have:
		\begin{align}\label{eqn:prop1_partb_ineq1}
			 &2\gamma_k^rF(y)^T\left(x_{k}-y\right)   \leq  \gamma_{k-1}^{r-1} \left\| {x_{k}}-y\right\|^2-\gamma_{k}^{r-1}\left\| x_{k+1} -  y \right\|^2 + \nonumber \\
			 &4\gamma_{k}^r\eta_{k}M_f+ \underbrace{\left(\gamma_{k}^{r-1} -\gamma_{k-1}^{r-1} \right) \left\| {x_{k}}-y\right\|^2}_{\text{term 2}} +  {\gamma_{k}^{r+1}\left(C_F +\eta_{{k}}C_f\right)^2}  .
		\end{align}
		Using the nonincreasing property of $\{\gamma_{k}\}$ and recalling $0\leq r<1$, we have $ \gamma_{k}^{r-1} -\gamma_{k-1}^{r-1} \geq 0 $. Thus, we can write: term 2 $\leq \left( \gamma_{k}^{r-1} -\gamma_{k-1}^{r-1} \right) 4M_X^2 $. Taking summation over {$k\in [N]$ in equation \eqref{eqn:prop1_partb_ineq1}} and dropping a nonpositive term we obtain
		\begin{align}\label{eq40}
			 &2\sum_{k = 1}^{N}\gamma_k^rF(y)^T\left(x_{k}-y\right) \leq  \gamma_{0}^{r-1} \left\| {x_{1}}-y\right\|^2 +4M_f\sum_{k = 1}^{N}\gamma_{k}^r\eta_{k} \nonumber \\   
			 &+ 4M_X^2 \left(\gamma_{N}^{r-1} -\gamma_{0}^{r-1} \right)      + { \left(C_F+\eta_{{0}}C_f\right)^2\sum_{k = 1}^{N}\gamma_{k}^{r+1}}.
\end{align}
Writing equation \eqref{eq39} for $k=0$ and adding and subtracting $2\gamma_0^rF(y)^T\bar x_{0,m}$, we have
		\begin{align*}
			& 2\gamma_0^rF(y)^T\left(\bar x_{0,m}-y + x_0 -\bar  x_{0,m}\right)    \leq  4\gamma_{0}^r\eta_{0}M_f  \\
			 & + \gamma_{0}^{r-1} \left(\left\| {x_{0}}-y\right\|^2   -\left\| x_{1} -  y \right\|^2\right) +  {\gamma_{0}^{r+1}\left(C_F+\eta_{{0}}C_f\right)^2}\nonumber.
		\end{align*}
Adding the preceding relation with equation \eqref{eq40} we have
		\begin{align*}
			&2\gamma_0^rF(y)^T\left(\bar x_{0,m}-y \right)  + 2\sum_{k = 1}^{N}\gamma_k^rF(y)^T\left(x_{k}-y\right) \\
			& \leq  4M_X^2 \left( \gamma_{N}^{r-1}- \gamma_{0}^{r-1} \right)   + \left(C_F+\eta_{{0}}C_f\right)^2\sum_{k=0}^{N}\gamma_{k}^{r+1}\\
			&+ \gamma_{0}^{r-1} \left\| {x_{0}}-y\right\|^2+4M_f\sum_{k = 0}^{N}\gamma_{k}^r\eta_{k}+{\underbrace{2\gamma_0^rF(y)^T\left(\bar x_{0,m}-x_0 \right)}_{\text{term 3}}} .
		\end{align*}
		Using the Cauchy-Schwarz inequality we have: term 3 $\leq 2\gamma_0^rC_F\|x_{0,m}-x_{0,1} \|$. We also have $\|x_0-y\|^2 \leq 4M_X^2$. Dividing the both sides of the preceding inequality by  $2\sum_{k = 0}^{N} \gamma_{k}^r$ and invoking Lemma \ref{lem:averaging_seq}, we have
		 \begin{align*}
		 	&F(y)^T\left({\bar{x}_{N,m}}-y\right) \leq  \left(\sum_{k = 0}^{N}\gamma_k^r\right)^{-1}\hspace{-0.25cm}\left(\tfrac{2M_X^2}{ \gamma_{N}^{1-r}} +2M_f\textstyle\sum_{k = 0}^{N}\gamma_{k}^r\eta_{k} \right.\\
		 	& \left.  + { \mytfrac{\left(C_F+\eta_{{0}}C_f\right)^2}{2}\textstyle\sum_{k=0}^{N}\gamma_{k}^{r+1}}+{\gamma_0^rC_F\|\bar x_{0,m}-x_{0,1} \|}\right).
		 \end{align*}
	Taking the supremum on both sides with respect to $y$ over the set $X$ and recalling Definition \ref{def:gap} we have
	 \begin{align*}
	 	& \mathrm{GAP}\left({\bar{x}_{N,m}}\right) \leq  \left(\sum_{k = 0}^{N}\gamma_k^r\right)^{-1}\left( {\mytfrac{\left(C_F+\eta_{{0}}C_f\right)^2}{2}\sum_{k=0}^{N}\gamma_{k}^{r+1}}\right.\\
		&\left.+2M_f\sum_{k = 0}^{N}\gamma_{k}^r\eta_{k}+2M_X^2  \gamma_{N}^{r-1}   +{\gamma_0^rC_F\|\bar x_{0,m}-x_{0,1} \|}\right).
	 \end{align*}
Adding equation \eqref{eqn:consensus_error_bound_GAP} with the preceding inequality we obtain the desired inequality.}
\end{proof}
{In the following we present the main result of this section. We provide non-asymptotic rate statements for each agent {$i \in [m]$} in terms of suboptimality measured by the global objective function, and infeasibility characterized by the dual gap function. We note that unlike the analysis of the standard incremental gradient schemes in the literature, here we provide these rate results for individual agents {$i \in [m]$}. 
\begin{theorem}[Agent-wise rate statements for Algorithm \ref{alg:IR-IG_avg}]\label{thm:rates}
Consider problem \eqref{prob:initial_problem}.  Let the averaged sequence $\{\bar{x}_{k,i}\}$ be generated by agent {$i \in[m]$} using Algorithm \ref{alg:IR-IG_avg}. Let Assumption \ref{assum:initial_prob} hold. Let the stepsize sequence $\{\gamma_{k}\}$ and the regularization sequence $\{\eta_{{k}}\}$ be updated using $\gamma_{k} := \mytfrac{\gamma_{0}}{\sqrt{k+1}}$ and $\eta_{k} := \mytfrac{\eta_{0}}{(k+1)^b}$, respectively, where $\gamma_{0}, \eta_{0} > 0$ and $0< b <0.5$. Then the following inequalities hold for all {$i\in[m]$}, all $N\geq 2^{\mytfrac{2}{1-r}}-1$, and all $r \in [0,1)${:}
\begingroup
\allowdisplaybreaks
\begin{align}
(a)\  & f(\bar{x}_{N,i}) -f^* \leq\mytfrac{2-r}{(N+1)^{0.5-b}}\left(\mytfrac{2M_X^2}{\eta_0\gamma_0}+  \mytfrac{\gamma_0\left(C_F+\eta_0C_f\right)^2}{\eta_0(1-r+2b)} \right.\nonumber\\
 & \left. + f( \bar x_{0,m}) - f(x_{0,1})  +C_f\left\|\bar  {x}_{0,i} -\bar  {x}_{0,m}\right\| \right.\nonumber\\
 & \left.+ {\mytfrac{{2(m-i)}\gamma_{0}C_f\left(C_F + \eta_0 C_f\right)}{m(1-r)}} \right).\label{eqn:rate_f}\\ 
(b)\   & \mathrm{GAP}(\bar{x}_{N,i}) \leq 
	 \mytfrac{2-r}{(N+1)^{b}} \left(\mytfrac{2M_X^2}{\gamma_0} +\mytfrac{2M_f \eta_0}{1-0.5r-b} \right.\nonumber\\
	 & \left.+{C_F\|\bar x_{0,m}-x_{0,1} \|}  + C_F\left\| \bar {x}_{0,i} - \bar {x}_{0,m}\right\|\right.\notag\\
		&\left.+ \mytfrac{\left(C_F+\eta_0C_f\right)^2\gamma_0}{1-r}+\mytfrac{2{(m-i)}C_F\left(C_F + \eta_0 C_f\right) \gamma_0}{m(1-r)}\right).		
		\label{eqn:rate_gap}
\end{align}
\endgroup
\end{theorem}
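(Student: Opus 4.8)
The plan is to specialize the agent-wise error bounds of Proposition~\ref{prop:error_bound_aIR-IG} to the prescribed sequences $\gamma_k=\gamma_0(k+1)^{-1/2}$ and $\eta_k=\eta_0(k+1)^{-b}$, and then to estimate the resulting harmonic-type series using Lemma~\ref{lem:harmonic_series_bound}. For both parts I would start from the two displayed inequalities in Proposition~\ref{prop:error_bound_aIR-IG} and substitute the sequences, which turns every quantity appearing there into a constant times a power of $(k+1)$: the denominator is $\sum_{k=0}^N\gamma_k^r=\gamma_0^r\sum_{k=0}^N(k+1)^{-r/2}$, while the numerator series are $\sum_{k=0}^N\gamma_k^{r+1}/\eta_k$ (exponent $\tfrac{r+1}{2}-b$), $\sum_{k=0}^N\gamma_k^r\eta_k$ (exponent $\tfrac r2+b$), $\sum_{k=0}^N\gamma_k^{r+1}$ (exponent $\tfrac{r+1}{2}$), together with the initialization terms carrying only the factor $\gamma_0^r$. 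The first fact to record is that, since $0\le r<1$ and $0<b<0.5$, the four exponents $\tfrac r2,\ \tfrac{r+1}{2}-b,\ \tfrac r2+b,\ \tfrac{r+1}{2}$ all lie in $[0,1)$, so Lemma~\ref{lem:harmonic_series_bound} applies with $\Gamma=1$; moreover the largest of them is $\tfrac{r+1}{2}$, and pushing the admissibility requirement $N\ge(2^{1/(1-\beta)}-1)\Gamma$ through this worst case produces precisely the stated threshold $N\ge 2^{2/(1-r)}-1$ (using $b\le\tfrac12$ one checks the other three exponents demand a weaker lower bound on $N$).

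Next I would apply Lemma~\ref{lem:harmonic_series_bound}: its lower bound gives $\sum_{k=0}^N\gamma_k^r\ge\gamma_0^r\tfrac{(N+1)^{1-r/2}}{2(1-r/2)}$, hence $\big(\sum_{k=0}^N\gamma_k^r\big)^{-1}\le\tfrac{2-r}{\gamma_0^r(N+1)^{1-r/2}}$ and, for the $\gamma_0^r$-prefactored terms, $\lambda_{0,N}=\gamma_0^r/\sum_{j=0}^N\gamma_j^r\le\tfrac{2-r}{(N+1)^{1-r/2}}$; while its upper bound $\sum_{k=0}^N(k+1)^{-\beta}\le\tfrac{(N+1)^{1-\beta}}{1-\beta}$ controls each numerator series. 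Multiplying these estimates term by term collapses every summand of the Proposition~\ref{prop:error_bound_aIR-IG} bounds into a constant times a single power of $N+1$. Collecting the exponents, the slowest-decaying contributions are $(N+1)^{b-1/2}$ in part (a) (from the $\gamma_N^{r-1}/\eta_N$ term and the $\sum_{k}\gamma_k^{r+1}/\eta_k$ term) and $(N+1)^{-b}$ in part (b) (from the $\sum_k\gamma_k^r\eta_k$ term); all other contributions decay like $(N+1)^{-1/2}$ or $(N+1)^{-(1-r/2)}$, and each of these is at most the dominant power because $b>0$ and, elementarily, $\tfrac12\ge\tfrac12-b$, $1-\tfrac r2\ge\tfrac12-b$, and $1-\tfrac r2\ge b$. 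Bounding each term by its dominant power, factoring out the common prefactor $\tfrac{2-r}{(N+1)^{0.5-b}}$ (resp.\ $\tfrac{2-r}{(N+1)^{b}}$), and matching the resulting constants — e.g.\ $\tfrac{1-r}{2}+b=\tfrac{1-r+2b}{2}$ in the regularization term of (a) and $1-\tfrac r2-b$ in the $M_f$ term of (b) — reproduces exactly \eqref{eqn:rate_f} and \eqref{eqn:rate_gap}.

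The genuinely routine part is the arithmetic on powers of $N+1$; the points where I would be most careful are: (i) verifying that the single threshold $N\ge 2^{2/(1-r)}-1$ dominates all four Lemma~\ref{lem:harmonic_series_bound} admissibility conditions, which rests on $\beta\mapsto 2^{1/(1-\beta)}$ being increasing together with $\tfrac{r+1}{2}$ being the maximal exponent; (ii) checking the small set of exponent inequalities used to absorb the faster-decaying and initialization terms into the advertised rate; and (iii) the bookkeeping of the initialization difference $f(\bar x_{0,m})-f(x_{0,1})$, whose sign is a priori unknown — when it is nonpositive the corresponding term is dropped (or replaced by its magnitude), which affects only the constant and not the $\mathcal{O}((N+1)^{b-1/2})$ and $\mathcal{O}((N+1)^{-b})$ rates. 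Aside from these, the derivation is a direct substitution-and-estimate computation.
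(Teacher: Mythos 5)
Your proposal is correct and follows essentially the same route as the paper's proof: substitute the prescribed $\gamma_k,\eta_k$ into Proposition \ref{prop:error_bound_aIR-IG}, verify that the single threshold $N\geq 2^{2/(1-r)}-1$ covers all admissibility conditions of Lemma \ref{lem:harmonic_series_bound} (the worst exponent being $0.5(1+r)$), bound the denominator from below and the numerator series from above, and absorb the faster-decaying terms using exactly the exponent inequalities you list ($0.5-0.5r+b>0$ for part (a), $1-0.5r\geq b$ for part (b)). Even your caveat (iii) about the a priori unknown sign of $f(\bar x_{0,m})-f(x_{0,1})$ corresponds to the paper's own implicit use of $(N+1)^{0.5-0.5r+b}\geq 1$ in the final step, so no substantive difference remains.
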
}
\begin{proof} 
{(a) Consider the inequality in Proposition \ref{prop:error_bound_aIR-IG}(a). Substituting $\gamma_k$ and $\eta_k$ by their update rules we obtain
\begin{align*}
	&f(\bar{x}_{N,i}) -f^* \leq \left(\textstyle\sum_{k=0}^{N}\mytfrac{\gamma_0^r}{(k+1)^{0.5r}}\right)^{-1}\left(\mytfrac{2M_X^2(N+1)^{0.5(1-r)+b}}{\eta_0\gamma_0^{1-r}} \right.\nonumber\\
	&  \hspace{-0.2cm}\left.  + \mytfrac{\left(C_F+\eta_0C_f\right)^2}{2}\sum_{k=0}^{N}\mytfrac{\gamma_0^{1+r}}{\eta_0(k+1)^{0.5(1+r)-b}}+\gamma_{0}^rf(\bar  x_{0,m}) - \gamma_{0}^rf(x_{0,1})\right.\nonumber\\
	&\hspace{-0.2cm}\left.+ {\mytfrac{{(m-i)}C_f\left(C_F + \eta_0 C_f\right)}{m}\sum_{k=0}^{N}\mytfrac{\gamma_{0}^{r+1}}{(k+1)^{0.5(1+r)}}}+{C_f\gamma_{{0}}^r\left\| \bar {x}_{0,i} -\bar  {x}_{0,m}\right\|} \right).
\end{align*}
In the next step, to apply Lemma \ref{lem:harmonic_series_bound} we need to ensure that the conditions in that result are met. From $0\leq r<1$ and $0<b<0.5$, we have $0\leq 0.5r <1$, $0\leq 0.5(1+r)-b <1$, $0\leq 0.5r+b<1$, and $0 \leq 0.5(1+r)<1$.  Further, from $N\geq 2^{\mytfrac{2}{1-r}}-1$, $0<b<0.5$, and $0\leq r<1$ we have that $N\geq \max\left\{2^{1/(1-0.5r)},2^{1/(1-0.5(1+r)+b)}, 2^{1/(1-0.5(1+r))}\right\}-1$. 

{\noindent Therefore, all the necessary conditions of Lemma \ref{lem:harmonic_series_bound} are met.}
 \begin{align*}
 &	{f(\bar{x}_{N,i}) -f^*} \leq \left(\mytfrac{\gamma_0^r(N+1)^{1-0.5r}}{2(1-0.5r)}\right)^{-1} \left(\mytfrac{2M_X^2(N+1)^{0.5(1-r)+b}}{\eta_0\gamma_0^{1-r}} \right.\nonumber\\
 & \left.   +  \mytfrac{\gamma_0^{1+r}\left(C_F+\eta_0C_f\right)^2(N+1)^{1-0.5(1+r)+b}}{2\eta_0(1-0.5(1+r)+b)} \right.\nonumber\\
 & \left. + {\mytfrac{{(m-i)}C_f\left(C_F + \eta_0 C_f\right)\gamma_{0}^{r+1}(N+1)^{1-0.5(1+r)}}{m(1-0.5(1+r))}} \right.\nonumber\\
 & \left.+{\gamma_{0}^rf( \bar x_{0,m}) - \gamma_{0}^rf(x_{0,1})}+{C_f\gamma_{{0}}^r\left\|\bar  {x}_{0,i} - \bar {x}_{0,m}\right\|} \right).
 \end{align*}
From the preceding relation we obtain
  \begin{align*} 
 &	{f(\bar{x}_{N,i}) -f^*} \leq \left({2-r}\right)\left(\mytfrac{2M_X^2}{\eta_0\gamma_0(N+1)^{0.5-b}} \right.\nonumber\\
 & \left.+  \mytfrac{\gamma_0\left(C_F+\eta_0C_f\right)^2}{2\eta_0(1-0.5(1+r)+b)(N+1)^{0.5-b}}    + {\mytfrac{{(m-i)}C_f\left(C_F + \eta_0 C_f\right)\gamma_{0}}{m(1-0.5(1+r))(N+1)^{0.5}}} \right.\nonumber\\
 &\left.+\mytfrac{f( \bar x_{0,m}) - f(x_{0,1})+C_f\left\|\bar  {x}_{0,i} - \bar {x}_{0,m}\right\|}{(N+1)^{1-0.5r}} \right).
 \end{align*}
Factoring out $1/(N+1)^{0.5-b}$ we obtain
  \begin{align*} 
 &	{f(\bar{x}_{N,i}) -f^*} \leq \mytfrac{2-r}{(N+1)^{0.5-b}}\left(\mytfrac{2M_X^2}{\eta_0\gamma_0}+  \mytfrac{\gamma_0\left(C_F+\eta_0C_f\right)^2}{2\eta_0(1-0.5(1+r)+b)} \right.\nonumber\\
 & \left.   + {\mytfrac{{(m-i)}C_f\left(C_F + \eta_0 C_f\right)\gamma_{0}}{m(1-0.5(1+r))(N+1)^{b}}}+\mytfrac{f(\bar  x_{0,m}) - f(x_{0,1})+C_f\left\| \bar {x}_{0,i} - \bar {x}_{0,m}\right\|}{(N+1)^{0.5-0.5r+b}} \right).
 \end{align*}
Note that from $b>0$ and $r<1$ we have $0.5-0.5r+b>0$.  Hence equation \eqref{eqn:rate_f} holds. 

\noindent (b) Consider the inequality in Proposition \ref{prop:error_bound_aIR-IG}(b). We have
		\begin{align*}
			&\mathrm{GAP}(\bar{x}_{N,i}) \leq
			 \left(\textstyle\sum_{k = 0}^{N}\gamma_k^r\right)^{-1} \left( 
			  2M_X^2  \gamma_{N}^{r-1}   +2M_f\textstyle\sum_{k = 0}^{N}\gamma_{k}^r\eta_{k}	\right.\nonumber\\
			  &\left.+{\gamma_0^rC_F\|\bar x_{0,m}-x_{0,1} \|} +{\gamma_{{0}}^rC_F\left\|\bar  {x}_{0,i} - \bar {x}_{0,m}\right\| }\right.\\
			  &\left.  +\left( \mytfrac{\left(C_F+\eta_{0}C_f\right)^2}{2} + \mytfrac{{(m-i)}C_F\left(C_F + \eta_0 C_f\right)}{m}\right)\textstyle\sum_{k=0}^{N}\gamma_{k}^{r+1}\right).
		\end{align*}
Substituting $\{\gamma_k\}$ and $\{\eta_k\}$ by their update rules we obtain
		\begin{align*}
			&\mathrm{GAP}(\bar{x}_{N,i}) \leq
			\left(\sum_{k=0}^{N}\mytfrac{\gamma_0^r}{(k+1)^{0.5r}}\right)^{-1} \left(\mytfrac{2M_X^2(N+1)^{0.5(1-r)}}{\gamma_0^{1-r}}   \right.\\
			&\left.+\sum_{k=0}^{N}\mytfrac{2M_f \eta_0\gamma_0^r}{(k+1)^{0.5r+b}} +{\gamma_0^rC_F\left(\|\bar x_{0,m}-x_{0,1} \| +\left\|\bar  {x}_{0,i} -\bar  {x}_{0,m}\right\| \right)} \right.\\ 
&\left.+\left( \mytfrac{\left(C_F+\eta_{0}C_f\right)^2}{2} + \mytfrac{{(m-i)}C_F\left(C_F + \eta_0 C_f\right)}{m}\right)\sum_{k=0}^{N}\mytfrac{\gamma_0^{r+1}}{(k+1)^{0.5(1+r)}}\right).
		\end{align*}
Utilizing the bounds in Lemma \ref{lem:harmonic_series_bound} we obtain
	\begin{align*}
		&\mathrm{GAP}(\bar{x}_{N,i}) \leq
		\left(\mytfrac{\gamma_0^r(N+1)^{1-0.5r}}{2(1-0.5r)}\right)^{-1} \left(\mytfrac{2M_f \eta_0\gamma_0^r (N+1)^{1-0.5r-b}}{1-0.5r-b}  \right.\\
		&\left.+\mytfrac{2M_X^2(N+1)^{0.5(1-r)}}{\gamma_0^{1-r}} +{\gamma_0^rC_F\left(\|\bar x_{0,m}-x_{0,1} \|  +\left\|\bar  {x}_{0,i} - \bar {x}_{0,m}\right\|\right)}\right.\\
		&\left.+ {\left( \mytfrac{\left(C_F+\eta_{0}C_f\right)^2}{2} + \mytfrac{{(m-i)}C_F\left(C_F + \eta_0 C_f\right)}{m}\right)\mytfrac{\gamma_0^{r+1}(N+1)^{1-0.5(1+r)}}{(1-0.5(1+r))}}\right).
	\end{align*}	
Rearranging the terms we obtain
	\begin{align*} 
		&\mathrm{GAP}(\bar{x}_{N,i})\leq
		\left({2-r}\right)\left(\mytfrac{2M_X^2}{\gamma_0(N+1)^{0.5}}   +\mytfrac{2M_f \eta_0}{(1-0.5r-b)(N+1)^{b}}\right.\\
		&\left.+\mytfrac{{C_F\|\bar x_{0,m}-x_{0,1} \|}  + C_F\left\| \bar {x}_{0,i} -\bar  {x}_{0,m}\right\|}{(N+1)^{1-0.5r}}\right.\\
		&\left.+ {\left( \mytfrac{\left(C_F+\eta_{0}C_f\right)^2}{2} + \mytfrac{{(m-i)}C_F\left(C_F + \eta_0 C_f\right)}{m}\right)\mytfrac{\gamma_0}{(1-0.5(1+r))(N+1)^{0.5}}}\right).
	\end{align*}	
Note that from $b<0.5$ and $0\leq r<1$ we have $1-0.5r \geq b$.  Hence equation \eqref{eqn:rate_gap} holds. }
\end{proof} 
{\begin{remark}[Iteration complexity of Algorithm \ref{alg:IR-IG_avg}] \normalfont
	Consider the rate results presented by relations \eqref{eqn:rate_f} and \eqref{eqn:rate_gap}. Let us choose $r:=0$ and suppose $\gamma_{k} := \mytfrac{(C_F+C_f)^{-1}}{\sqrt{k+1}}$ and $\eta_{k}:=\mytfrac{1}{\sqrt[4]{k+1}}$ for $k\geq 0$. Let $\epsilon>0$ be an arbitrary small scalar such that $f\left(\bar{x}_{N_\epsilon,i}\right) -f^*+\mathrm{GAP}\left(\bar{x}_{N_\epsilon,i}\right)<\epsilon$ for all {$i\in [m]$}. Then,  we obtain the iteration complexity of $N_\epsilon=\mathcal{O}((C_F+C_f)^4\epsilon^{-4})$ for each agent.  Interestingly, this iteration complexity matches the complexity of the proposed method in our earlier work \cite{KaushikYousefianSIOPT2021} for addressing formulation \eqref{prob:initial_problem} in a centralized regime where the information of the objective function $f$ is globally known. Importantly, this indicates that there is no sacrifice in the iteration complexity in addressing the distributed formulation \eqref{prob:initial_problem}. Another {important} observation to make is that the iteration complexity of the proposed distributed method is independent of the number of agents $m$. 
\end{remark}}

\section{Comparisons with Tikhonov trajectory}\label{sec:convergence_analysis_strong_convexity}
{In this section we study the convergence rate properties of the proposed method in the solution space. To this end, we compare the sequences generated from Algorithm \ref{alg:IR-IG_avg} with the Tikhonov trajectory (\fy{see} Definition \ref{def:regularized_problem}). Throughout this section, we make the following additional assumption.
\begin{assumption}  \label{assum:initial_problem_3} Consider problem \eqref{prob:initial_problem}. For all  {$i \in [m]$} let {the} component function $f_i:\mathbb{R}^n\rightarrow {\mathbb{R}}$ be continuously differentiable and $\mu_{f_i}-$strongly convex over the set $X$.
\end{assumption}
Note that under this assumption, the equation \eqref{eqn:alg_step_1} in Algorithm \ref{alg:IR-IG_avg} can be written as
		\begin{align}\label{eqn:alg_2_step_1}
		x_{k,{i+1}} & :=  \mathcal{P}_X \left({x_{k,i}}-\gamma_k\left(F_i\left(x_{k,i}\right)+ {\eta_{k}} {\nabla} f_{i}\left( x_{k,i}\right)\right)\right).
	\end{align}
{Next, we comment on the strong convexity parameter of the global objective function.}
\begin{remark}\label{rem:strong_convex_parameter_f}\normalfont
Under Assumption \ref{assum:initial_problem_3}, we note that any function $f_i$ is strongly convex with a parameter $\mu_{\text{min}}\triangleq \min_{{i \in [m]}}\mu_{f_i}$. This also implies that the global function $f(x)\triangleq \sum_{i = 1}^m f_i(x)$ is $m \mu_{\text{min}}-$ strongly convex. Another implication is that under Assumption \ref{assum:initial_prob}(b), (c), and Assumption \ref{assum:initial_problem_3}, problem \eqref{prob:initial_problem} has a unique optimal solution. Throughout this section, we denote the {unique optimal} solution of \eqref{prob:initial_problem} by $x^*$.
\end{remark}
\subsection{Preliminaries}\label{subsec:preliminararies}
Here we provide some preliminary results that will be used {later}. We first introduce the notion of Tikhonov trajectory.
\begin{definition}[Tikhonov trajectory]\label{def:regularized_problem} \normalfont
Let $\{\eta_k\}\subseteq \mathbb{R}_{++}^n$ be a given sequence. Consider a class of regularized VI problems for $k\geq 0$ given by
 \begin{align}\label{prob:regularized_problem}
 	\text{VI}\left( X, \textstyle\sum_{i = 1}^m \left(F_i + \eta_{k} {\nabla} f_i\right)\right).
 \end{align}
Let $x^*_{\eta_k}$ denote the unique solution to \eqref{prob:regularized_problem}. The sequence  $\{ x^*_{\eta_k}\}$ is called the Tikhonov trajectory associated with problem \eqref{prob:initial_problem}. 
\end{definition}
In the following, we establish the convergence of the Tikhonov trajectory to the unique optimal solution of problem \eqref{prob:initial_problem}. This result will be used later in Theorem \ref{thm:part2}.
\begin{lemma}[Properties of the Tikhonov trajectory]\label{lem:recursive_bound_reg_problem}
	Consider problem \eqref{prob:initial_problem} and Definition \ref{def:regularized_problem}. Let Assumption \ref{assum:initial_prob}(b), (c), and Assumption \ref{assum:initial_problem_3} hold. Then:\\
	\noindent (a) Let $\{\eta_k\}$ be a strictly positive sequence such that ${\text{lim}}_{k\rightarrow \infty}\eta_{{k}}=0$. Then, $\lim_{k\to\infty} x^*_{\eta_{{k}}}$ exists and is equal to $x^*$.
	
	\noindent (b) For any two nonnegative integers $k_1$ and $k_2$, we have $\left\| x^*_{\eta_{{k_2}}} - x^*_{\eta_{{k_1}}}  \right\|\leq \mytfrac{C_f}{m \mu_{\text{min}}}\left|1-\mytfrac{\eta_{{k_2}}}{\eta_{{k_1}}}\right|.$
\end{lemma}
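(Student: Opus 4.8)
The plan is to establish part (b) first, since it follows from a short self-contained estimate for strongly monotone VIs, and then to deduce part (a) by a Tikhonov-type selection argument built on subsequential limits.

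For part (b), fix nonnegative integers $k_1,k_2$ and write $u \triangleq x^*_{\eta_{k_1}}$ and $v \triangleq x^*_{\eta_{k_2}}$. By Definition \ref{def:regularized_problem}, $u$ solves $\text{VI}(X, F + \eta_{k_1}\nabla f)$ and $v$ solves $\text{VI}(X, F + \eta_{k_2}\nabla f)$; under Assumption \ref{assum:initial_prob}(b),(c) and Assumption \ref{assum:initial_problem_3} the mapping $F + \eta_k \nabla f$ is continuous and strongly monotone with modulus $\eta_k m\mu_{\text{min}}$ on the compact convex set $X$, so these solutions are indeed unique. Testing the first VI at $y=v$ and the second at $y=u$ and adding the two inequalities, the $F$-cross term $(v-u)^T(F(u)-F(v))$ is nonpositive by monotonicity of $F$, which leaves $(u-v)^T(\eta_{k_1}\nabla f(u) - \eta_{k_2}\nabla f(v)) \le 0$. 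I would then split $\eta_{k_1}\nabla f(u) - \eta_{k_2}\nabla f(v) = \eta_{k_1}(\nabla f(u) - \nabla f(v)) + (\eta_{k_1}-\eta_{k_2})\nabla f(v)$, bound the first piece below using the $m\mu_{\text{min}}$-strong convexity of $f$ (Remark \ref{rem:strong_convex_parameter_f}) by $\eta_{k_1}m\mu_{\text{min}}\|u-v\|^2$, bound the second piece by Cauchy--Schwarz together with $\|\nabla f(v)\| \le C_f$ (Remark \ref{rem:lipschitz_parameters}), and finally divide through by $\eta_{k_1}m\mu_{\text{min}}\|u-v\|$ (the case $u=v$ being trivial) to arrive at $\|u-v\| \le \tfrac{C_f}{m\mu_{\text{min}}}\big|1-\tfrac{\eta_{k_2}}{\eta_{k_1}}\big|$.

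For part (a), since $X$ is compact the sequence $\{x^*_{\eta_k}\}$ has limit points; let $\bar x \in X$ be one, attained along a subsequence $\{k_j\}$. First I would show feasibility: for fixed $y\in X$, passing to the limit in $(y - x^*_{\eta_{k_j}})^T(F(x^*_{\eta_{k_j}}) + \eta_{k_j}\nabla f(x^*_{\eta_{k_j}})) \ge 0$, using $\eta_{k_j}\to 0$, the uniform bound $\|\nabla f(x^*_{\eta_{k_j}})\|\le C_f$, and continuity of $F$, yields $(y-\bar x)^T F(\bar x)\ge 0$ for all $y\in X$, i.e. $\bar x\in\text{SOL}(X,F)$. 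Next, optimality: for any $z\in\text{SOL}(X,F)$, testing the regularized VI at $y=z$ gives $\eta_{k_j}(z - x^*_{\eta_{k_j}})^T\nabla f(x^*_{\eta_{k_j}}) \ge (x^*_{\eta_{k_j}} - z)^T F(x^*_{\eta_{k_j}})$, and the right-hand side is bounded below by $(x^*_{\eta_{k_j}} - z)^T F(z) \ge 0$ by monotonicity of $F$ and $z\in\text{SOL}(X,F)$; dividing by $\eta_{k_j}>0$ and using convexity of $f$ gives $f(x^*_{\eta_{k_j}}) \le f(z)$. Letting $j\to\infty$ and using continuity of $f$ yields $f(\bar x)\le f(z)$ for every feasible $z$; together with feasibility of $\bar x$ and the uniqueness of the optimal solution of \eqref{prob:initial_problem} (Remark \ref{rem:strong_convex_parameter_f}), this forces $\bar x = x^*$. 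Since every subsequential limit of the bounded sequence $\{x^*_{\eta_k}\}$ equals $x^*$, the whole sequence converges to $x^*$, which in particular shows the limit exists.

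I expect the optimality half of part (a) to be the main obstacle: one must show that the regularization does more than drive the iterates toward feasibility for $\text{VI}(X,F)$ -- it selects the $f$-minimal feasible point. The decisive step is to combine the regularized VI inequality with monotonicity of $F$ to cancel the $F$-terms and isolate $(z - x^*_{\eta_{k_j}})^T\nabla f(x^*_{\eta_{k_j}}) \ge 0$, after which convexity of $f$ closes the comparison $f(x^*_{\eta_{k_j}})\le f(z)$. The remaining ingredients -- existence of limit points, continuity passes, and the strongly monotone VI estimate in part (b) -- are routine.
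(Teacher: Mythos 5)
Your proof is correct, and it is complete where the paper is not: the paper simply defers to Lemma 4.5 of \cite{KaushikYousefianSIOPT2021}, and your argument is the standard route that result follows — for (b), pairing the two regularized VIs, cancelling the $F$-cross term by monotonicity, and using the $\eta_{k_1}m\mu_{\text{min}}$-strong monotonicity of $\eta_{k_1}\nabla f$ together with $\|\nabla f(\cdot)\|\leq C_f$ on $X$; for (a), extracting a limit point by compactness of $X$, passing to the limit to get $\bar x\in\mbox{SOL}(X,F)$, and testing the regularized VI at an arbitrary $z\in\mbox{SOL}(X,F)$ with monotonicity of $F$ to obtain $f(x^*_{\eta_{k_j}})\leq f(z)$, so that uniqueness of $x^*$ forces convergence of the whole sequence. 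All the steps check out, including the division by $\eta_{k_1}m\mu_{\text{min}}\|u-v\|$ (with the $u=v$ case handled separately) and the fact that boundedness of $\nabla f$ on $X$, rather than its continuity, suffices to kill the regularization term in the feasibility pass.
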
   
\begin{proof}
The proof can be done in a similar fashion to the proof of Lemma 4.5 in \cite{KaushikYousefianSIOPT2021}. 
\end{proof}
Next we obtain a recursive bound on an error metric that is characterized by the sequence $\{x_k\}$ in Definition \ref{def:convenient_notation} and the Tikhonov trajectory. This result will be utilized in Theorem \ref{thm:part2}.
\begin{lemma}
	Let  $\{x_k\}$ be given by Definition \ref{def:convenient_notation}. Let Assumption \ref{assum:initial_prob}(b), (c), and Assumption \ref{assum:initial_problem_3} hold. Suppose $\{\gamma_k\}$ and $\{\eta_k\}$ are strictly positive and nonincreasing such that $\gamma_0\eta_0 \mu_{\text{min}}\leq 0.5$. Then for any $k\geq 1$ we have
	\begin{align}\label{eqn:iterative_bound_0}
	&\left\| x_{k+1} - x^*_{\eta_k} \right\|^2 \leq 
 \left(1- \gamma_k\eta_k \mu_{\text{min}}\right)\left\| x_k-x^*_{\eta_{k-1}} \right\|^2\nonumber\\&+\mytfrac{1.5C_f^2}{m^2\gamma_k\eta_k\mu_{\text{min}}^3}\left(1-\mytfrac{\eta_{k}}{\eta_{k-1}}\right)^2 +\gamma_k^2\left(C_F+\eta_kC_f\right)^2.
	\end{align}
\end{lemma}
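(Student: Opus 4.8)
The plan is to expand the projected update one agent at a time against the anchor point $z:=x^*_{\eta_k}$, sum over the cycle to obtain a one-step contraction in $\|x_k-x^*_{\eta_k}\|^2$, and then trade $x^*_{\eta_k}$ for $x^*_{\eta_{k-1}}$ on the right-hand side using the Lipschitz continuity of the Tikhonov trajectory from Lemma \ref{lem:recursive_bound_reg_problem}(b).

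First I would fix $k\ge 1$, set $z:=x^*_{\eta_k}\in X$, and start from \eqref{eqn:alg_2_step_1}. The nonexpansivity of $\mathcal{P}_X$ and expanding the square give, for each $i\in[m]$,
\[
\|x_{k,i+1}-z\|^2\le\|x_{k,i}-z\|^2+\tfrac{\gamma_k^2(C_F+\eta_kC_f)^2}{m^2}-2\gamma_k\bigl(F_i(x_{k,i})+\eta_k\nabla f_i(x_{k,i})\bigr)^T(x_{k,i}-z),
\]
where the quadratic term is bounded by the agent-wise bounds $\|F_i(x)\|\le C_F/m$ and $\|\nabla f_i(x)\|\le C_f/m$, valid for $x\in X$. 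I would then decompose $F_i(x_{k,i})+\eta_k\nabla f_i(x_{k,i})=\bigl(F_i(x_{k,i})-F_i(z)\bigr)+\eta_k\bigl(\nabla f_i(x_{k,i})-\nabla f_i(z)\bigr)+\bigl(F_i(z)+\eta_k\nabla f_i(z)\bigr)$; monotonicity of $F_i$ and $\mu_{f_i}$-strong monotonicity of $\nabla f_i$ (with $\mu_{f_i}\ge\mu_{\text{min}}$, Assumption \ref{assum:initial_problem_3}) yield $\bigl(F_i(x_{k,i})-F_i(z)\bigr)^T(x_{k,i}-z)\ge 0$ and $\eta_k\bigl(\nabla f_i(x_{k,i})-\nabla f_i(z)\bigr)^T(x_{k,i}-z)\ge\eta_k\mu_{\text{min}}\|x_{k,i}-z\|^2$.

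Next I would sum over $i\in[m]$, telescoping the $\|x_{k,i}-z\|^2$ terms (recall $x_{k,1}=x_k$ and $x_{k,m+1}=x_{k+1}$), keep only the $i=1$ contribution $\eta_k\mu_{\text{min}}\|x_k-z\|^2$ of the strong-monotonicity bound (discarding the remaining nonnegative terms, which only weakens the estimate), and split each $\bigl(F_i(z)+\eta_k\nabla f_i(z)\bigr)^T(x_{k,i}-z)$ into a part along $x_k-z$ and a consensus part along $x_{k,i}-x_k$. Since $\sum_i\bigl(F_i(z)+\eta_k\nabla f_i(z)\bigr)=F(z)+\eta_k\nabla f(z)$ and $z=x^*_{\eta_k}$ solves $\mathrm{VI}(X,F+\eta_k\nabla f)$ with $x_k\in X$, the aggregated $x_k-z$ part is $\le 0$ and drops. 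The consensus part is controlled by Cauchy--Schwarz, $\|F_i(z)+\eta_k\nabla f_i(z)\|\le(C_F+\eta_kC_f)/m$, and Lemma \ref{lem:neighbor_agents}(a), with $\sum_{i=1}^m(i-1)=m(m-1)/2$; combining this contribution $\tfrac{(m-1)\gamma_k^2(C_F+\eta_kC_f)^2}{m}$ with the telescoped noise $\tfrac{\gamma_k^2(C_F+\eta_kC_f)^2}{m}$ exactly yields $\gamma_k^2(C_F+\eta_kC_f)^2$. This gives $\|x_{k+1}-x^*_{\eta_k}\|^2\le(1-2\gamma_k\eta_k\mu_{\text{min}})\|x_k-x^*_{\eta_k}\|^2+\gamma_k^2(C_F+\eta_kC_f)^2$, where $1-2\gamma_k\eta_k\mu_{\text{min}}\ge 0$ since $\gamma_k\eta_k\mu_{\text{min}}\le\gamma_0\eta_0\mu_{\text{min}}\le 0.5$ (the sequences are nonincreasing).

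Finally I would pass from $x^*_{\eta_k}$ to $x^*_{\eta_{k-1}}$ via $\|x_k-x^*_{\eta_k}\|^2\le(1+\alpha)\|x_k-x^*_{\eta_{k-1}}\|^2+(1+\alpha^{-1})\|x^*_{\eta_{k-1}}-x^*_{\eta_k}\|^2$ with $\alpha:=\gamma_k\eta_k\mu_{\text{min}}\in(0,0.5]$, use the elementary inequalities $(1-2\alpha)(1+\alpha)\le 1-\alpha$ and $(1-2\alpha)(1+\alpha^{-1})\le 1.5\,\alpha^{-1}$, and bound $\|x^*_{\eta_{k-1}}-x^*_{\eta_k}\|^2\le\tfrac{C_f^2}{m^2\mu_{\text{min}}^2}\bigl(1-\tfrac{\eta_k}{\eta_{k-1}}\bigr)^2$ by Lemma \ref{lem:recursive_bound_reg_problem}(b); substituting these into the previous bound produces \eqref{eqn:iterative_bound_0}. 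The main obstacle is structural rather than computational: the contraction factor must multiply $\|x_k-x^*_{\eta_k}\|^2$ for the \emph{anchor} iterate $x_k$, whereas strong convexity is only naturally available at the out-of-sync local copies $x_{k,i}$; restricting the strong-monotonicity gain to agent $1$ (where $x_{k,1}=x_k$) resolves this at the cost of a looser constant, and the only remaining care is the bookkeeping of the consensus errors via Lemma \ref{lem:neighbor_agents}(a), which is exactly what upgrades the naive noise term $\gamma_k^2(C_F+\eta_kC_f)^2/m$ to $\gamma_k^2(C_F+\eta_kC_f)^2$.
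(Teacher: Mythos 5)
Your proposal is correct and follows essentially the same route as the paper: agent-wise expansion via nonexpansivity, monotonicity of $F_i$ plus strong monotonicity of $\nabla f_i$ against $x^*_{\eta_k}$, shifting the cross terms to $x_k$ and killing the aggregate via the regularized VI optimality, consensus errors from Lemma \ref{lem:neighbor_agents}(a) upgrading the noise to $\gamma_k^2(C_F+\eta_kC_f)^2$, and then the anchor swap $x^*_{\eta_k}\to x^*_{\eta_{k-1}}$ by Young's inequality with $\alpha=\gamma_k\eta_k\mu_{\text{min}}$ together with Lemma \ref{lem:recursive_bound_reg_problem}(b). Your phrasing of keeping the strong-monotonicity gain only at $i=1$ is the same maneuver the paper performs when it bounds $1-2\gamma_k\eta_k\mu_{\text{min}}<1$ for $i\geq 2$ so the terms telescope, so there is no substantive difference.
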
}
\begin{proof}
{From Algorithm \ref{alg:IR-IG_avg}, the nonexpansivity of the projection, and}   $x^*_{\eta_k} \in X$, for any {$i \in [m]$} and $k\geq 1$ we have
{\begin{align*} 
		&\left\| x_{k,i+1} - x^*_{\eta_k} \right\|^2 \leq  \left\| x_{k,i} - x^*_{\eta_k} \right\|^2\\
		&+\gamma_{k}^2\left\|F_i(x_{k,i}) +\eta_k\nabla f_i\left(x_{k,i}\right)\right\|^2 \\
		&-  2 \gamma_{k}\left(F_i(x_{k,i}) + \eta_k\nabla f_i\left(x_{k,i}\right)\right)^T\left( x_{k,i} - x^*_{\eta_k}\right).
	\end{align*}}
From the definition of $C_F$ and $C_f$ we have
	\begin{align*} 
		&\left\| x_{k,i+1} - x^*_{\eta_k} \right\|^2 
		\leq \left\| x_{k,i} - x^*_{\eta_k} \right\|^2+\gamma_k^2
		\left(\mytfrac{C_F+\eta_kC_f}{m}\right)^2+  \\&2 \gamma_{k}\left(F_i(x_{k,i}) + \eta_k\nabla f_i\left(x_{k,i}\right)\right)^T\left(x^*_{\eta_k}- x_{k,i}\right) .
	\end{align*}
From the strong monotonicity of  ${{\nabla}} f_i  $ and the monotonicity of $ F_i $ we can write
\begin{align*}
&2 \gamma_{k}\left(F_i(x_{k,i}) + \eta_k\nabla f_i\left(x_{k,i}\right)\right)^T\left(x^*_{\eta_k}- x_{k,i}\right)\\
& \leq 2\gamma_{k}F_i\left(x^*_{\eta_k}\right)^T\left(x^*_{\eta_k}- x_{k,i}\right) \\
&+{2\gamma_{k}\eta_k\left(\nabla f_i\left(x^*_{\eta_k}\right)^T\left(x^*_{\eta_k}- x_{k,i}\right) -  \mu_{\text{min}}\left\|x^*_{\eta_k}- x_{k,i}\right\|^2\right)}.
\end{align*}
From the preceding two relations we obtain
\begin{align*}
	&\left\| x_{k,i+1} - x^*_{\eta_k} \right\|^2 
	\leq  \left(1- 2\gamma_k\eta_k \mu_{\text{min}}\right)\left\| x_{k,i} - x^*_{\eta_k} \right\|^2\\
	&+\gamma_k^2\left(\mytfrac{C_F+\eta_kC_f}{m}\right)^2 +2\gamma_{k}F_i\left(x^*_{\eta_k}\right)^T\left(x^*_{\eta_k}- x_{k,i}\right)\nonumber \\
	& +2\gamma_{k}\eta_k\nabla f_i\left(x^*_{\eta_k}\right)^T\left(x^*_{\eta_k}- x_{k,i}\right) .
\end{align*}
Adding and subtracting $2\gamma_kF_i (x^*_{\eta_{{k}}} )^Tx_k + 2\gamma_k\eta_k\nabla f_i (x^*_{\eta_{{k}}} )^Tx_k$ in the previous relation we get
{\begin{align*}
	&\left\| x_{k,i+1} - x^*_{\eta_k} \right\|^2 \leq  \left(1- 2\gamma_k\eta_k \mu_{\text{min}}\right)\left\| x_{k,i} - x^*_{\eta_k} \right\|^2\nonumber \\ 
	& +\gamma_k^2 \left(\mytfrac{C_F+\eta_kC_f}{m}\right)^2 +2\gamma_{k}F_i\left(x^*_{\eta_k}\right)^T\left(x^*_{\eta_k}- x_{k}\right) \nonumber\\ 
	&+2\gamma_{k}\left(\eta_k\nabla f_i\left(x^*_{\eta_k}\right)^T\left(x^*_{\eta_k}- x_{k}\right) + \left|F_i\left(x^*_{\eta_k}\right)^T\left(x_{k}- x_{k,i}\right)\right|\right) \\
	& +2\gamma_{k}  \eta_k \left|\nabla f_i\left(x^*_{\eta_k}\right)^T\left(x_{k}- x_{k,i}\right) \right| .
\end{align*}}
Employing the Cauchy-Schwarz inequality we obtain
\begin{align*}
&\left\| x_{k,i+1} - x^*_{\eta_k} \right\|^2 \leq  \left(1- 2\gamma_k\eta_k \mu_{\text{min}}\right)\left\| x_{k,i} - x^*_{\eta_k} \right\|^2\\
&+\gamma_k^2 \left(\mytfrac{C_F+\eta_kC_f}{m}\right)^2+2\gamma_{k}F_i\left(x^*_{\eta_k}\right)^T\left(x^*_{\eta_k}- x_{k}\right)\nonumber \\ 
& +2\gamma_{k}\eta_k\nabla f_i\left(x^*_{\eta_k}\right)^T\left(x^*_{\eta_k}- x_{k}\right)+\mytfrac{2\gamma_{k}\left(C_F+\eta_{{k}}C_f\right)}{m}\left\|x_{k}- x_{k,i}\right\| .
\end{align*}
Next we take summations over {$i \in [m]$} from both sides. {Recall  $f(x)\triangleq\sum_{i = 1}^mf_i(x)$ and $F(x)\triangleq\sum_{i = 1}^mF_i(x)$.} Using  Definition \ref{def:convenient_notation} for $x_{k,1}$ and recalling  $1-2\gamma_k\eta_k\mu_{\text{min}}< 1$  we have
\begin{align}\label{eqn:iterative_bound_3}
	&\sum_{i = 1}^m\left\| x_{k,i+1} - x^*_{\eta_k} \right\|^2 \leq  \left(1- 2\gamma_k\eta_k \mu_{\text{min}}\right)\left\| x_{k} - x^*_{\eta_k} \right\|^2 \nonumber \\
	&+  \sum_{i = 2}^m\left\| x_{k,i} - x^*_{\eta_k} \right\|^2 +\gamma_k^2 
	\mytfrac{\left(C_F+\eta_kC_f\right)^2}{m}\nonumber\\
	&+\mytfrac{2\gamma_{k}\left(C_F+\eta_{{k}}C_f\right)}{m}\sum_{i = 1}^m \left\|x_{k}- x_{k,i}\right\|+2\gamma_{k}F\left(x^*_{\eta_k}\right)^T\left(x^*_{\eta_k}- x_{k}\right)\nonumber \\
	& +2\gamma_{k}\eta_k\nabla f\left(x^*_{\eta_k}\right)^T\left(x^*_{\eta_k}- x_{k}\right).
\end{align}
{From Lemma \ref{lem:neighbor_agents}, $\| x_k - x_{k,i} \|\leq (i-1)\gamma_{k}\left({C_F}+\eta_{k}C_f\right)/m$} for all {$i\in[ m]$}. Invoking this relation and Definition \ref{def:convenient_notation} we obtain
\begin{align*}
&	\left\| x_{k+1} - x^*_{\eta_k} \right\|^2 \leq \left(1- 2\gamma_k\eta_k\mu_{\text{min}}\right)\left\| x_{k} - x^*_{\eta_k} \right\|^2   +\gamma_k^2\left(C_F\right.\\
&\left.+\eta_kC_f\right)^2 + 2\gamma_k\underbrace{\left(F\left(x^*_{\eta_k}\right)+\eta_k\nabla f\left(x^*_{\eta_k}\right)\right)^T\left(x^*_{\eta_k}- x_{k}\right)}_{\text{term 1}}.
\end{align*}
From Definition \ref{def:regularized_problem}, $x^*_{\eta_k}$ is the solution to problem \eqref{prob:regularized_problem}.  Recalling  $x_k \in X$,  we have: term 1 $\leq 0$. We obtain 
\begin{align}\label{eqn:iterative_bound_4}
	\left\| x_{k+1} - x^*_{\eta_k} \right\|^2 &\leq 
	 \left(1- 2\gamma_k\eta_k\mu_{\text{min}}\right) \left\| x_{k} - x^*_{\eta_k} \right\|^2\nonumber\\& +\gamma_k^2\left(C_F+\eta_kC_f\right)^2.
\end{align}
{Next, consider the term $\| x_k-x^*_{\eta_k} \|^2$ as follows.}
{\begin{align*}
\left\| x_k-x^*_{\eta_k} \right\|^2 &= \left\| x_k-x^*_{\eta_{k-1}}\right\|^2+ \left\|x^*_{\eta_{k-1}}-x^*_{\eta_k} \right\|^2\\
&  + \underbrace{2\left(x_k-x^*_{\eta_{k-1}}\right)^T\left(x^*_{\eta_{k-1}}-x^*_{\eta_k}\right)}_{\text{term 2}}{.}
\end{align*}}
{Next, we  bound term 2 by recalling $2a^Tb\leq \|a\|^2/\alpha + \alpha\|b\|^2 $ where $a,b \in \mathbb{R}^n$ and $\alpha > 0$. For $\alpha := {1}/{\gamma_{k}\eta_k\mu_{\text{min}}}$, bounding term 2 in the preceding inequality we obtain}
\begin{align*}
	\left\| x_k-x^*_{\eta_k} \right\|^2 &=
	 \left(1+{\gamma_{k}\eta_k\mu_{\text{min}}}\right)\left\| x_k-x^*_{\eta_{k-1}} \right\|^2\\&+\left(1+\mytfrac{1}{\gamma_{k}\eta_k\mu_{\text{min}}}\right) \left\|x^*_{\eta_{k-1}}-x^*_{\eta_k} \right\|^2 .
\end{align*}
From Lemma \ref{lem:recursive_bound_reg_problem}(b) we obtain
\begin{align}\label{eqn:iterative_bound_5}
	&\left\| x_k-x^*_{\eta_k} \right\|^2 \leq \left(1+{\gamma_{k}\eta_k\mu_{\text{min}}}\right)\left\| x_k-x^*_{\eta_{k-1}} \right\|^2\nonumber\\&+\left(1+\mytfrac{1}{\gamma_{k}\eta_k\mu_{\text{min}}}\right)\mytfrac{C_f^2}{m^2\mu_{\text{min}}^2}\left(1-\mytfrac{\eta_{k}}{\eta_{k-1}}\right)^2.
\end{align}
From equations \eqref{eqn:iterative_bound_4} and \eqref{eqn:iterative_bound_5} we have
\begin{align*}
	&\left\| x_{k+1} - x^*_{\eta_k} \right\|^2\\
	& \leq 
	\left(1- 2\gamma_k\eta_k\mu_{\text{min}}\right) \left(1+{\gamma_{k}\eta_k\mu_{\text{min}}}\right)\left\| x_k-x^*_{\eta_{k-1}} \right\|^2\\
	&+\left(1+\mytfrac{1}{\gamma_{k}\eta_k\mu_{\text{min}}}\right)\mytfrac{C_f^2}{m^2\mu_{\text{min}}^2}\left(1-\mytfrac{\eta_{k}}{\eta_{k-1}}\right)^2 +\gamma_k^2\left(C_F+\eta_kC_f\right)^2.
\end{align*}
{Using $0<\gamma_k\eta_k\mu_{\text{min}}\leq 0.5$ we have the desired result.}
\end{proof}
\subsection{Convergence analysis}\label{subsec:convergence}
{In this section, our goal is to derive a {non-asymptotic} convergence rate statement that relates the generated sequences by Algorithm \ref{alg:IR-IG_avg} to the Tikhonov trajectory. We begin with providing a class of sequences for the stepsize and the regularization parameter and prove some properties for them that will be used in the analysis. 
\begin{definition}[{Stepsize and regularization parameter}]\label{def:sequences_tikh_section}
Let $\gamma_k:=\mytfrac{\gamma}{(k+\Gamma)^a}$ and $\eta_k:=\mytfrac{\eta}{(k+\Gamma)^b}$ for all $k\geq 0$ where $\gamma,\eta,\Gamma, a$ and $b$ are strictly positive scalars. Let $a>b$, $a+b<1$, and $3a+b<2$. Assume that $\Gamma \geq 1$ and it is sufficiently large such that $\Gamma^{a+b}\geq 2\gamma\eta\mu_{\text{min}}$ and $\Gamma^{1-a-b}\geq  \mytfrac{4}{\gamma\eta\mu_{\text{min}}}$. 
\end{definition}
\begin{lemma}\label{lem:stepsize_reg_parameter_sequence_a} Consider Definition \ref{def:sequences_tikh_section}. The following results hold.
	
	\noindent(i) $\{\gamma_{k}\}$ and $\{\eta_{k}\}$ are strictly positive and nonincreasing such that $\gamma_0\eta_{{0}}\mu_{\text{min}} \leq  0.5$.
	
	\noindent(ii)  For all integers $k_1$ and $k_2$ such that $k_2\geq k_1\geq 0$ we have $1-\mytfrac{\eta_{k_2}}{\eta_{k_1}}  \leq \mytfrac{k_2-k_1}{k_2+\Gamma}$.
	
		\noindent(iii)  For all $k\geq 1$ we have $ \mytfrac{1}{\gamma_k^3\eta_{k}} \left( 1- \mytfrac{\eta_{{k}}}{\eta_{k-1}} \right)^2  \leq  \mytfrac{1}{\gamma^3\eta\Gamma^{2-3a-b}}$.
	
	\noindent(iv) For all $k\geq 1$ we have $\mytfrac{\gamma_{k-1}}{\eta_{k-1}} \leq \mytfrac{\gamma_{k}}{\eta_{k}}(1+0.5 \gamma_{k}\eta_{k}\mu_{\text{min}})$.
\end{lemma}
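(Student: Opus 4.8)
The plan is to treat the four parts in sequence; each reduces to substituting the defining expressions $\gamma_k=\gamma/(k+\Gamma)^a$ and $\eta_k=\eta/(k+\Gamma)^b$ and then invoking the standing exponent constraints together with the two lower bounds on $\Gamma$. A preliminary observation to record: $a+b<1$ with $a>0$ forces $b<1$, and $a>b>0$ together with $a<1$ forces $a-b\in(0,1)$; these elementary facts underlie everything below.

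\textbf{Parts (i)--(iii).} For (i), strict positivity is immediate, and since $k\mapsto(k+\Gamma)^a$ and $k\mapsto(k+\Gamma)^b$ are increasing for $a,b>0$, the reciprocals $\{\gamma_k\}$ and $\{\eta_k\}$ are nonincreasing; moreover $\gamma_0\eta_0\mu_{\text{min}}=\gamma\eta\mu_{\text{min}}/\Gamma^{a+b}\le 1/2$ is precisely the hypothesis $\Gamma^{a+b}\ge 2\gamma\eta\mu_{\text{min}}$. For (ii), I would set $s:=k_1+\Gamma$ and $t:=k_2+\Gamma$ so that $0<s\le t$ and $t-s=k_2-k_1$; then $1-\eta_{k_2}/\eta_{k_1}=1-(s/t)^b$ and the claim is equivalent to $(s/t)^b\ge s/t$, which holds because $s/t\in(0,1]$ and $0<b<1$. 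For (iii), substitution gives $1/(\gamma_k^3\eta_k)=(k+\Gamma)^{3a+b}/(\gamma^3\eta)$, while applying (ii) with $k_1=k-1,\ k_2=k$ gives $(1-\eta_k/\eta_{k-1})^2\le(k+\Gamma)^{-2}$; the product is $(k+\Gamma)^{3a+b-2}/(\gamma^3\eta)$, and since $3a+b-2<0$ and $k+\Gamma\ge\Gamma$, this is at most $1/(\gamma^3\eta\Gamma^{2-3a-b})$.

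\textbf{Part (iv) --- the main obstacle.} Because $a>b$, the ratio $\gamma_k/\eta_k=(\gamma/\eta)(k+\Gamma)^{b-a}$ is nonincreasing in $k$, so the correction factor $1+0.5\gamma_k\eta_k\mu_{\text{min}}$ genuinely has to absorb the growth of $\gamma_{k-1}/\eta_{k-1}$ relative to $\gamma_k/\eta_k$; a crude estimate will not do. I would write $\gamma_{k-1}/\eta_{k-1}=(\gamma_k/\eta_k)\big(1+1/(k-1+\Gamma)\big)^{a-b}$ and bound the power via the concavity estimate $(1+t)^\alpha\le 1+\alpha t$, valid for $\alpha=a-b\in(0,1]$ and $t\ge 0$, obtaining the factor $\le 1+(a-b)/(k-1+\Gamma)$. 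It then suffices to show $(a-b)/(k-1+\Gamma)\le 0.5\gamma_k\eta_k\mu_{\text{min}}=0.5\gamma\eta\mu_{\text{min}}/(k+\Gamma)^{a+b}$. For $k\ge1$ and $\Gamma\ge1$ one has $k+\Gamma\ge2$, hence $k+\Gamma\le2(k-1+\Gamma)$, so the desired inequality follows once $4(a-b)(k+\Gamma)^{a+b}\le\gamma\eta\mu_{\text{min}}(k+\Gamma)$, i.e. $(k+\Gamma)^{1-a-b}\ge 4(a-b)/(\gamma\eta\mu_{\text{min}})$; using $a-b<1$, $1-a-b>0$, and $k+\Gamma\ge\Gamma$, this in turn is implied by the second smallness hypothesis $\Gamma^{1-a-b}\ge 4/(\gamma\eta\mu_{\text{min}})$. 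The only delicate point is keeping this chain of inequalities correctly oriented, in particular the halving step $k+\Gamma\le 2(k-1+\Gamma)$ and the slack $a-b<1$, which together are exactly what make the explicit lower bound imposed on $\Gamma$ the one that is needed.
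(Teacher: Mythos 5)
Your proposal is correct and follows essentially the same route as the paper's proof: substitute the explicit sequences, use (ii) to control $1-\eta_k/\eta_{k-1}$ in (iii), and in (iv) expand the ratio $\gamma_{k-1}/\eta_{k-1}$ against $\gamma_k/\eta_k$, apply the Bernoulli-type bound on $(1+t)^{a-b}$, and invoke the condition $\Gamma^{1-a-b}\ge 4/(\gamma\eta\mu_{\text{min}})$. The only (harmless) deviations are cosmetic: in (ii) you use $x^{b}\ge x$ for $x\in(0,1]$ and $b\in(0,1)$, where the paper exploits $b<0.5$ via a square-root comparison, and in (iv) you retain the factor $a-b$ and use $k+\Gamma\le 2(k-1+\Gamma)$, whereas the paper drops $a-b$ and bounds $1+1/(k+\Gamma-1)\le 2$; both yield the same conclusion under the same hypotheses.
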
}
{\begin{proof}
	See Appendix \ref{app:lem:stepsize_reg_parameter_sequence_a}.
\end{proof}}
{\fy{In Theorem \ref{thm:part2},} we derive agent-specific rates relating the sequences \fy{$\{x_{k,i}\}$} with the Tikhonov trajectory.
\begin{theorem}[Comparison with the Tikhonov trajectory]\label{thm:part2}
Consider problem \eqref{prob:initial_problem}. Let Assumption \ref{assum:initial_prob}(b), (c), and Assumption \ref{assum:initial_problem_3} hold. Consider $\{x_k\}$ and $\{x^*_{\eta_{{k}}}\}$ given in Definitions \ref{def:convenient_notation} and \ref{def:regularized_problem}, respectively. Let the stepsize sequence $\{\gamma_{k}\}$ and the regularization sequence $\{\eta_{{k}}\}$ be given by Definition \ref{def:sequences_tikh_section}. Then for all $ k\geq 0$ and all {$i\in [m]$} we have 
\begin{align*}
 \left\|x_{k+1,i}  - x^*_{\eta_k}\right\|^2 \leq \mytfrac{2(i-1)^2\left(C_F+\eta_0C_f\right)^2\gamma^{2}}{m^2(k+\Gamma+1)^{2a}} +\mytfrac{2\tau B_0\gamma}{\mu_{\text{min}}\eta(k+\Gamma)^{a-b}},
\end{align*}
where $\tau \triangleq \text{max}\left\{ \mu_{\text{min}}\eta\gamma^{-1}B_0^{-1}\Gamma^{a-b}\left\|x_1-x^*_{\eta_0}\right\|^2, 2 \right\}$ and $B_0 \triangleq\mytfrac{1.5C_f^2}{m^2\mu_{\text{min}}^3\gamma^3\eta\Gamma^{2-3a-b}} + \left(C_F+\eta_0C_f\right)^2$.  
\end{theorem}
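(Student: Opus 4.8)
The plan is to reduce the agent-wise estimate to a scalar recursion for the ``anchor'' sequence $\{x_k\}$ of Definition \ref{def:convenient_notation}, solve that recursion by induction, and then transfer the bound to an arbitrary agent $i$ via the neighbor estimate of Lemma \ref{lem:neighbor_agents}(a). Set $v_k \triangleq \|x_{k+1}-x^*_{\eta_k}\|^2$ for $k\ge 0$. Its hypotheses being supplied by Lemma \ref{lem:stepsize_reg_parameter_sequence_a}(i) (nonincreasing, strictly positive, $\gamma_0\eta_0\mu_{\text{min}}\le 0.5$) and Assumption \ref{assum:initial_problem_3}, inequality \eqref{eqn:iterative_bound_0} reads $v_k \le (1-\gamma_k\eta_k\mu_{\text{min}})v_{k-1} + R_k$ for $k\ge 1$, where $R_k$ is the sum of the two noise terms. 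First I would show $R_k \le \gamma_k^2 B_0$ with $B_0$ exactly as in the statement: rewrite the first noise term as $\gamma_k^2 \cdot \tfrac{1.5C_f^2}{m^2\mu_{\text{min}}^3}\cdot \tfrac{1}{\gamma_k^3\eta_k}\bigl(1-\tfrac{\eta_k}{\eta_{k-1}}\bigr)^2$ and bound the last factor by $\tfrac{1}{\gamma^3\eta\Gamma^{2-3a-b}}$ using Lemma \ref{lem:stepsize_reg_parameter_sequence_a}(iii), while $\gamma_k^2(C_F+\eta_kC_f)^2\le \gamma_k^2(C_F+\eta_0C_f)^2$ since $\{\eta_k\}$ is nonincreasing. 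This produces the clean recursion $v_k \le (1-\gamma_k\eta_k\mu_{\text{min}})v_{k-1} + \gamma_k^2 B_0$ for $k\ge 1$.

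Next I would prove by induction on $k\ge 0$ that $v_k \le \tfrac{\tau B_0}{\mu_{\text{min}}}\cdot\tfrac{\gamma_k}{\eta_k}$, which, since $\gamma_k/\eta_k = \tfrac{\gamma}{\eta}(k+\Gamma)^{b-a} = \tfrac{\gamma}{\eta(k+\Gamma)^{a-b}}$, is exactly the bound $v_k\le \tfrac{\tau B_0\gamma}{\mu_{\text{min}}\eta(k+\Gamma)^{a-b}}$. For the base case $k=0$, the first branch of the definition of $\tau$ gives $\tau \ge \mu_{\text{min}}\eta\gamma^{-1}B_0^{-1}\Gamma^{a-b}\|x_1-x^*_{\eta_0}\|^2$, which rearranges precisely to $v_0 = \|x_1-x^*_{\eta_0}\|^2 \le \tfrac{\tau B_0}{\mu_{\text{min}}}\cdot\tfrac{\gamma_0}{\eta_0}$. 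For the inductive step, insert $v_{k-1}\le \tfrac{\tau B_0}{\mu_{\text{min}}}\cdot\tfrac{\gamma_{k-1}}{\eta_{k-1}}$ into the recursion and apply Lemma \ref{lem:stepsize_reg_parameter_sequence_a}(iv), namely $\tfrac{\gamma_{k-1}}{\eta_{k-1}}\le \tfrac{\gamma_k}{\eta_k}(1+0.5\gamma_k\eta_k\mu_{\text{min}})$ (legitimate after multiplying by $1-\gamma_k\eta_k\mu_{\text{min}}\ge 0$), together with the elementary estimate $(1-x)(1+0.5x)\le 1-0.5x$ for $x\ge 0$; this yields
\[
v_k \le (1-0.5\gamma_k\eta_k\mu_{\text{min}})\tfrac{\tau B_0}{\mu_{\text{min}}}\cdot\tfrac{\gamma_k}{\eta_k} + \gamma_k^2 B_0 = \tfrac{\tau B_0}{\mu_{\text{min}}}\cdot\tfrac{\gamma_k}{\eta_k} - \gamma_k^2 B_0\bigl(0.5\tau - 1\bigr).
\]
The second branch of $\tau$ forces $\tau\ge 2$, so the last term is nonpositive and the induction closes. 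I expect this to be the crux of the argument: the point is that the two-sided definition of $\tau$ (one branch absorbing the initial error into the base case, the other guaranteeing $\tau\ge 2$ to kill the residual in the step) is exactly what makes the induction self-sustaining; the rest is bookkeeping handled by Lemma \ref{lem:stepsize_reg_parameter_sequence_a}, so the conditions $a>b$, $a+b<1$, $3a+b<2$ and the lower bounds on $\Gamma$ enter only through those invocations and require no further case analysis.

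Finally, for an arbitrary agent $i\in[m]$, I would use $\|x_{k+1,i}-x^*_{\eta_k}\|^2 \le 2\|x_{k+1,i}-x_{k+1}\|^2 + 2\|x_{k+1}-x^*_{\eta_k}\|^2$, recall $x_{k+1}=x_{k+1,1}$ from Definition \ref{def:convenient_notation}, and apply Lemma \ref{lem:neighbor_agents}(a) at iteration $k+1$ to get $\|x_{k+1,i}-x_{k+1}\|\le \tfrac{(i-1)\gamma_{k+1}(C_F+\eta_{k+1}C_f)}{m}\le \tfrac{(i-1)\gamma(C_F+\eta_0C_f)}{m(k+\Gamma+1)^a}$ (using $\eta_{k+1}\le\eta_0$). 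Combining the squared version of this with $2v_k \le \tfrac{2\tau B_0\gamma}{\mu_{\text{min}}\eta(k+\Gamma)^{a-b}}$ gives precisely
\[
\|x_{k+1,i}-x^*_{\eta_k}\|^2 \le \frac{2(i-1)^2(C_F+\eta_0C_f)^2\gamma^2}{m^2(k+\Gamma+1)^{2a}} + \frac{2\tau B_0\gamma}{\mu_{\text{min}}\eta(k+\Gamma)^{a-b}},
\]
which is the asserted inequality and holds for all $k\ge 0$ and all $i\in[m]$.
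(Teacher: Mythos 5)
Your proposal is correct and follows essentially the same route as the paper's proof: the same recursion \eqref{eqn:iterative_bound_0} reduced via Lemma \ref{lem:stepsize_reg_parameter_sequence_a}(iii) to $v_{k}\le(1-\gamma_k\eta_k\mu_{\text{min}})v_{k-1}+B_0\gamma_k^2$, the same two-branch-$\tau$ induction closed with Lemma \ref{lem:stepsize_reg_parameter_sequence_a}(iv), and the same final splitting $\|x_{k+1,i}-x^*_{\eta_k}\|^2\le 2\|x_{k+1,i}-x_{k+1}\|^2+2\|x_{k+1}-x^*_{\eta_k}\|^2$ combined with Lemma \ref{lem:neighbor_agents}. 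The only differences are cosmetic (an index shift in the definition of $v_k$ and the streamlined estimate $(1-x)(1+0.5x)\le 1-0.5x$ in place of the paper's term-by-term expansion).
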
}
\begin{proof}
{Consider  \eqref{eqn:iterative_bound_0}. From Lemma \ref{lem:stepsize_reg_parameter_sequence_a}, for  $k\geq 1$ we have}
\begin{align*}
&\left\| x_{k+1} - x^*_{\eta_k} \right\|^2 \leq \left(1 - \gamma_k \eta_{{k}}\mu_{\text{min}}\right)\left\| x_k - x^*_{\eta_{k-1}} \right\|^2\\&+ \mytfrac{1.5C_f^2\gamma_{k}^2}{m^2\mu_{\text{min}}^3\gamma^3\eta\Gamma^{2-3a-b}} + \gamma_k^2\left(C_F+\eta_0C_f\right)^2.
\end{align*}
Let us define the terms $ v_k {\triangleq} \left\| x_k-x^*_{\eta_{k-1}} \right\|^2$, $\alpha_{k} {\triangleq} \gamma_{k}\eta_{k}\mu_{\text{min}}$, and $\beta_k\triangleq B_0\gamma_k^2$ for $k \geq 1$. Therefore, for all $k \geq 1$ we have
\begin{align}\label{eqn:proposition2_1}
v_{k+1} \leq (1-\alpha_k)v_k + \beta_k.
\end{align}
From Lemma \ref{lem:stepsize_reg_parameter_sequence_a} (iii), for all $k\geq 1$ we have 
{\begin{align}\label{eqn:proposition2_2}
\mytfrac{\beta_{k-1}}{\alpha_{k-1}} \leq  \mytfrac{B_0\gamma_{k}}{\mu_{\text{min}}\eta_{k}}(1+0.5 \gamma_{k}\eta_{k}\mu_{\text{min}}) =\mytfrac{\beta_{k}}{\alpha_{k}}(1+0.5 \alpha_{k}) .
\end{align}}
Next, we show that $v_{k+1} \leq \tau \mytfrac{\beta_{k}}{\alpha_{k}}$ for all $k\geq 0$. We apply induction on $k\geq 0$. Note that this relation holds for $k:=0$ as an implication of the definition of $\tau$. Suppose $v_k \leq \tau\mytfrac{\beta_{k-1}}{\alpha_{k-1}}$ holds for some $k \geq 1$. From \eqref{eqn:proposition2_1} we obtain $v_{k+1} \leq (1-\alpha_k)\mytfrac{\beta_{k-1}}{\alpha_{k-1}} \tau + \beta_k$. Using the upper bound for the right-hand side given by \eqref{eqn:proposition2_2} we have
\begin{align*}
&v_{k+1} \leq \tau(1-\alpha_k)(1+0.5 \alpha_{k})\mytfrac{\beta_{k}}{\alpha_{k}}  + \beta_k	\\
&=  \tau(1-\alpha_{k} + 0.5 \alpha_k -0.5 \alpha_k^2)\mytfrac{\beta_{k}}{\alpha_{k}}  + \beta_k
= \tau\mytfrac{\beta_{k}}{\alpha_{k}} \\&-\tau(1-0.5)\beta_{k} -0.5\tau\alpha_{k}\beta_{k} +\beta_{k}\leq\tau\mytfrac{\beta_{k}}{\alpha_{k}} +  (1-0.5\tau)\beta_{k}.
\end{align*}
From the definition of $\tau$, $\tau \geq 2$ implying that $1-0.5\tau\leq 0$. This completes the proof of induction. Recall from Lemma \ref{lem:neighbor_agents} that we have $\| x_k - x_{k,i} \|\leq (i-1)\gamma_{k}\left(C_F+\eta_{k}C_f\right)/m$ for any {$i \in [m]$}.  {For all $k \geq 0$ and  $i\in[m]$ we have}
\begin{align*}
 \left\|x_{k+1,i}  - x^*_{\eta_k}\right\|^2 &\leq 2 \left\| x_{k+1,i} -x_{k+1}\right\|^2+ 2\left\|x_{k+1} - x^*_{\eta_k}\right\|^2\\
 &\leq \mytfrac{2(i-1)^2\left(C_F+\eta_0C_f\right)^2\gamma_{k+1}^2}{m^2} +\mytfrac{2\tau B_0\gamma_{k}}{\mu_{\text{min}}\eta_k}\\
 & =\mytfrac{2(i-1)^2\left(C_F+\eta_0C_f\right)^2\gamma^2}{m^2(k+\Gamma+1)^{2a}} +\mytfrac{2\tau B_0\gamma}{\mu_{\text{min}}\eta(k+\Gamma)^{a-b}}.
\end{align*}
Hence the proof is completed. 
\end{proof}
\fy{\begin{remark} Recall that from Theorem~\ref{thm:rates}, the convergence rates of $\mathcal{O}(N^{-0.25})$ are obtained for $(a,b) =(0.5,0.25)$. These choices satisfy conditions in Def.~\ref{def:sequences_tikh_section}. In view of Theorem~\ref{thm:part2}, we have $ \left\|x_{k+1,i}  - x^*_{\eta_k}\right\|^2 \leq \mathcal{O}(k^{-0.25})$ for $i \in [m]$. 
\end{remark}}

\section{Numerical results} \label{sec:numerical_implementation}

\noindent {\bf (i) A traffic equilibrium problem:} For an illustrative example, we consider the transportation network in \cite{CZF2009NCP}. We first describe the network and present the NCP formulation. Then, we implement Algorithm \ref{alg:IR-IG_avg} to solve model \eqref{prob:dist_best_NCP} and compute the best equilibrium.
\begin{wrapfigure}{r}{4cm}
  \begin{center}
\includegraphics[width=3.9cm]{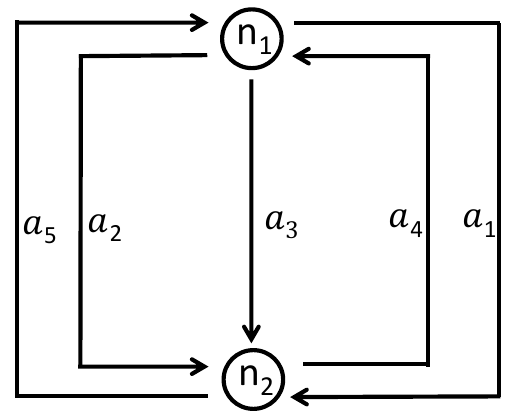}
  \end{center}
\caption{A transportation network with $2$ nodes and $5$ arcs}\label{fig:network}
\end{wrapfigure}
 Consider {a transportation network with the set of nodes $\{n_1,n_2\}$  and {the set of} directed arcs $ \{a_1,a_2,a_3,a_4,a_5\} $, \fy{as shown in Figure \ref{fig:network}.} 
Note that $a_1$ and $a_4$ construct a two-way road. The same holds for $a_2$ and $a_5$. We let $d \triangleq [d_1,d_2]^T$ denote the expected travel demand vector where  $d_1$ and $d_2$ correspond to the demand from $n_1$ to $n_2$, and from $n_2$ to $n_1$, respectively. Let the vector $h\triangleq [h_1,\ldots,h_5]^T$ denote the traffic flow on the arcs. The travel cost on arc $i$ \fy{is assumed to be} $[Ch+q]_i$ \fy{where the} cost matrix $C\in \mathbb{R}^{5\times 5}$ and vector $q\in \mathbb{R}^5$ be given by\begin{table*}[t]
	\renewcommand\thetable{2}
	\setlength{\tabcolsep}{0pt}
	\centering
	\begin{tabular}{c | c  c  c c}
		{\footnotesize$(\gamma_0,\eta_0) \ $}& {\footnotesize  $(0.1,0.1)$} & {\footnotesize $(0.1,1)$} &  {\footnotesize $(1,0.1)$} & {\footnotesize $(1,1)$}  \\
		\hline\\
		\rotatebox[origin=c]{90}{{\footnotesize $\ln\left(\text{infeasibility}\right)$}}
		&
		\begin{minipage}{.22\textwidth}
			\includegraphics[scale=.125, angle=0]{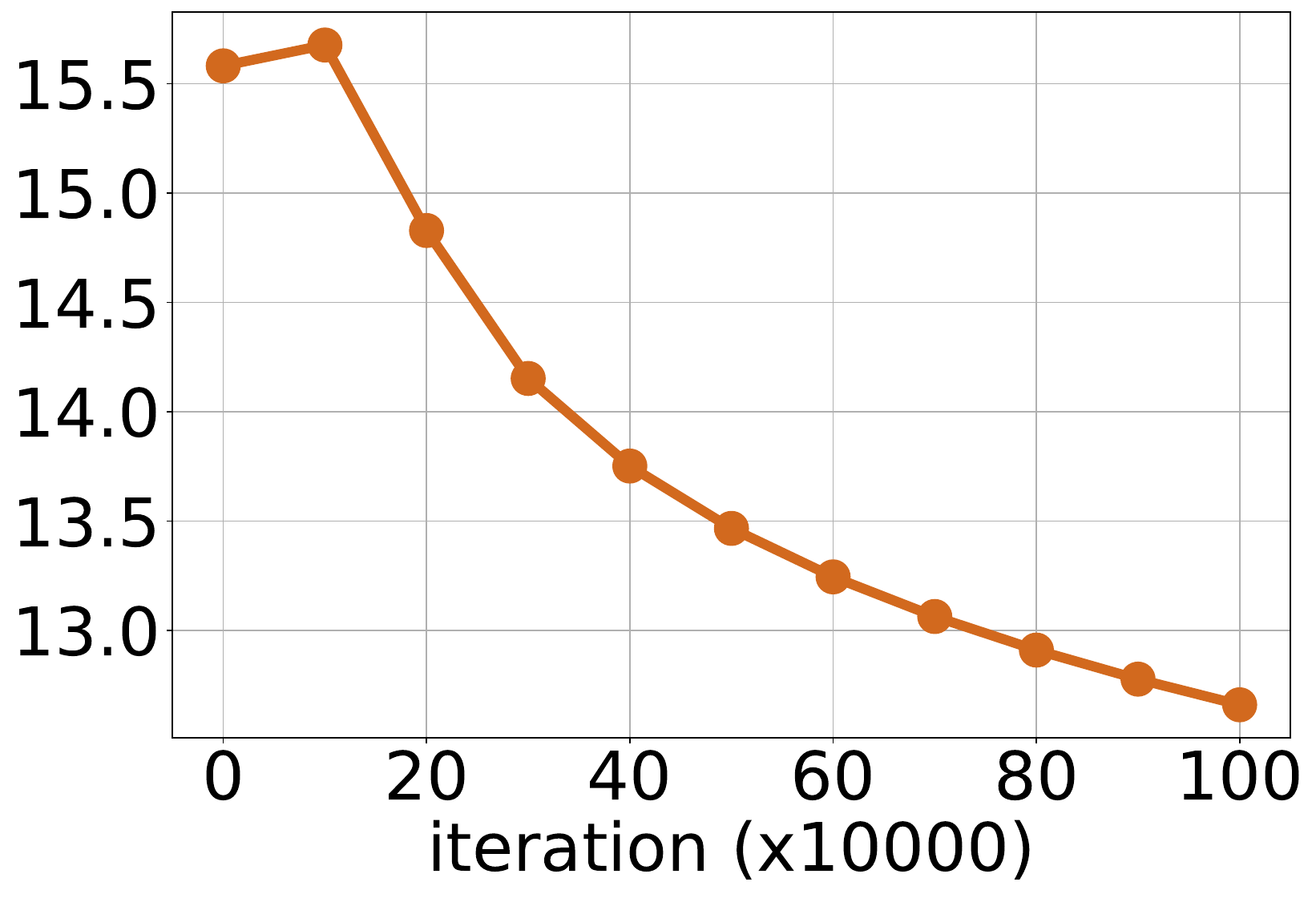}
		\end{minipage}
		&
		\begin{minipage}{.22\textwidth}
			\includegraphics[scale=.125, angle=0]{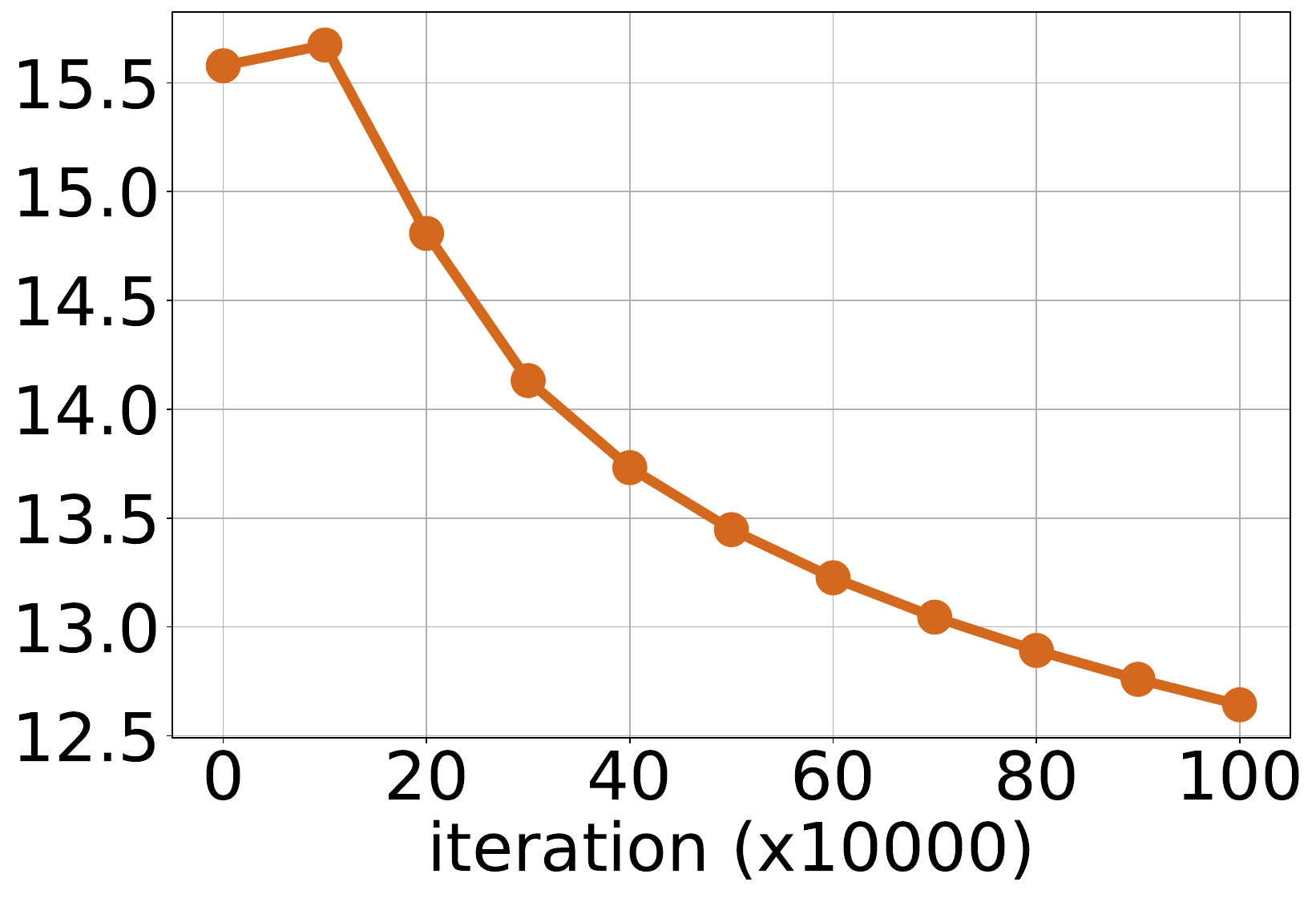}
		\end{minipage}
		&
		\begin{minipage}{.22\textwidth}
			\includegraphics[scale=.125, angle=0]{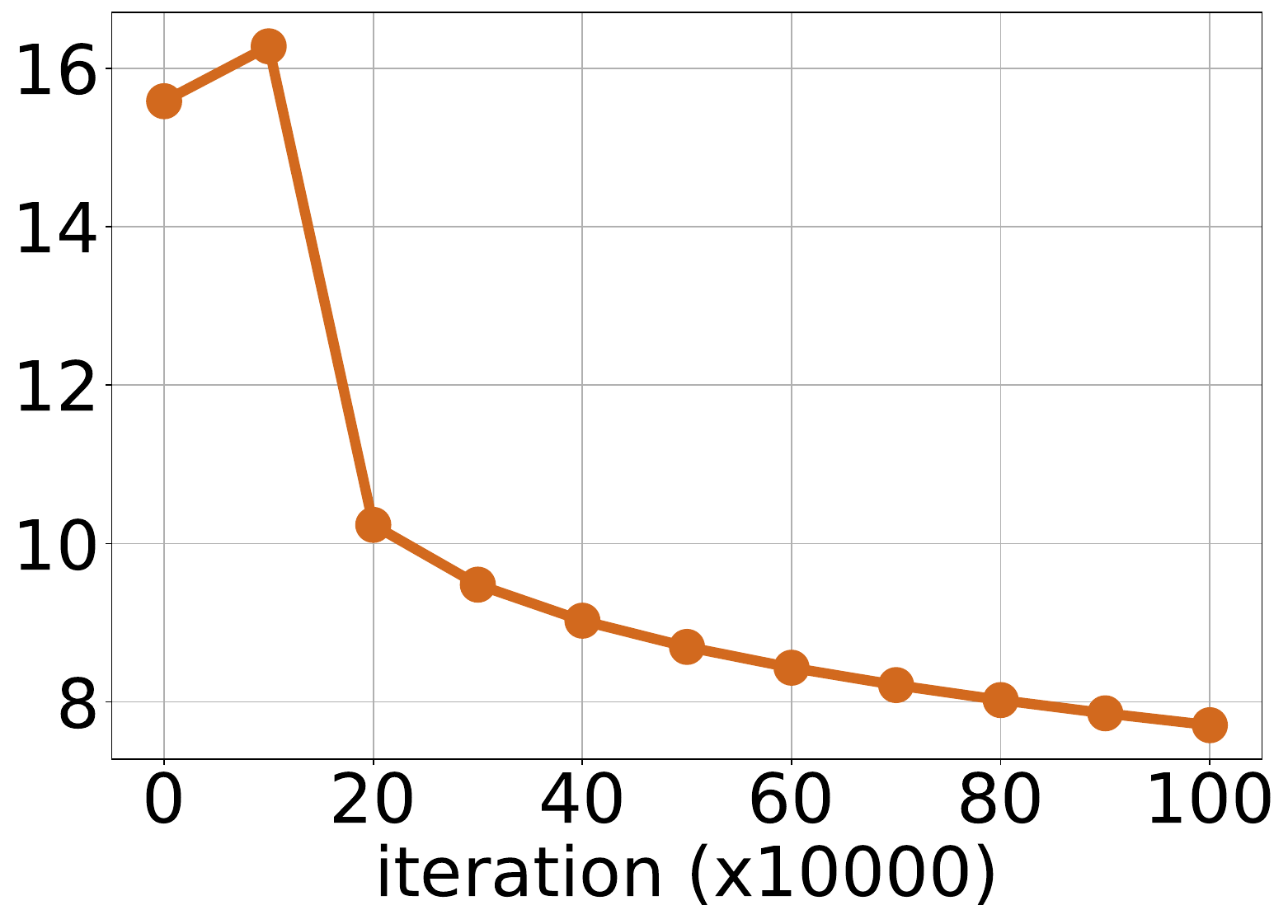}
		\end{minipage}
		&
		\begin{minipage}{.22\textwidth}
			\includegraphics[scale=.125, angle=0]{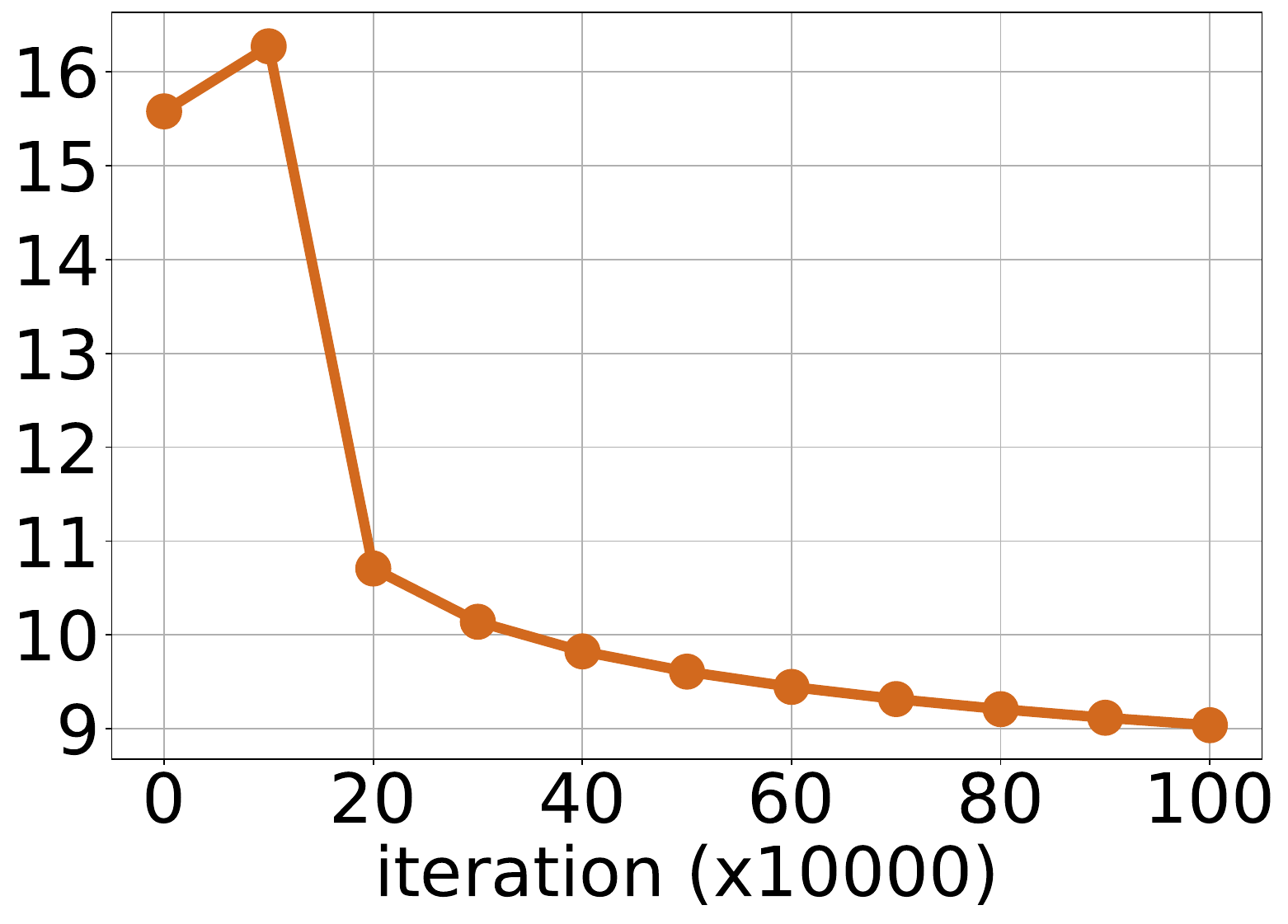}
		\end{minipage}
		\\
		\hbox{}& & & \\
		\hline\\
		\rotatebox[origin=c]{90}{{\footnotesize$\ln\left(\text{objective}\right)$}}
		&
		\begin{minipage}{.22\textwidth}
			\includegraphics[scale=.125, angle=0]{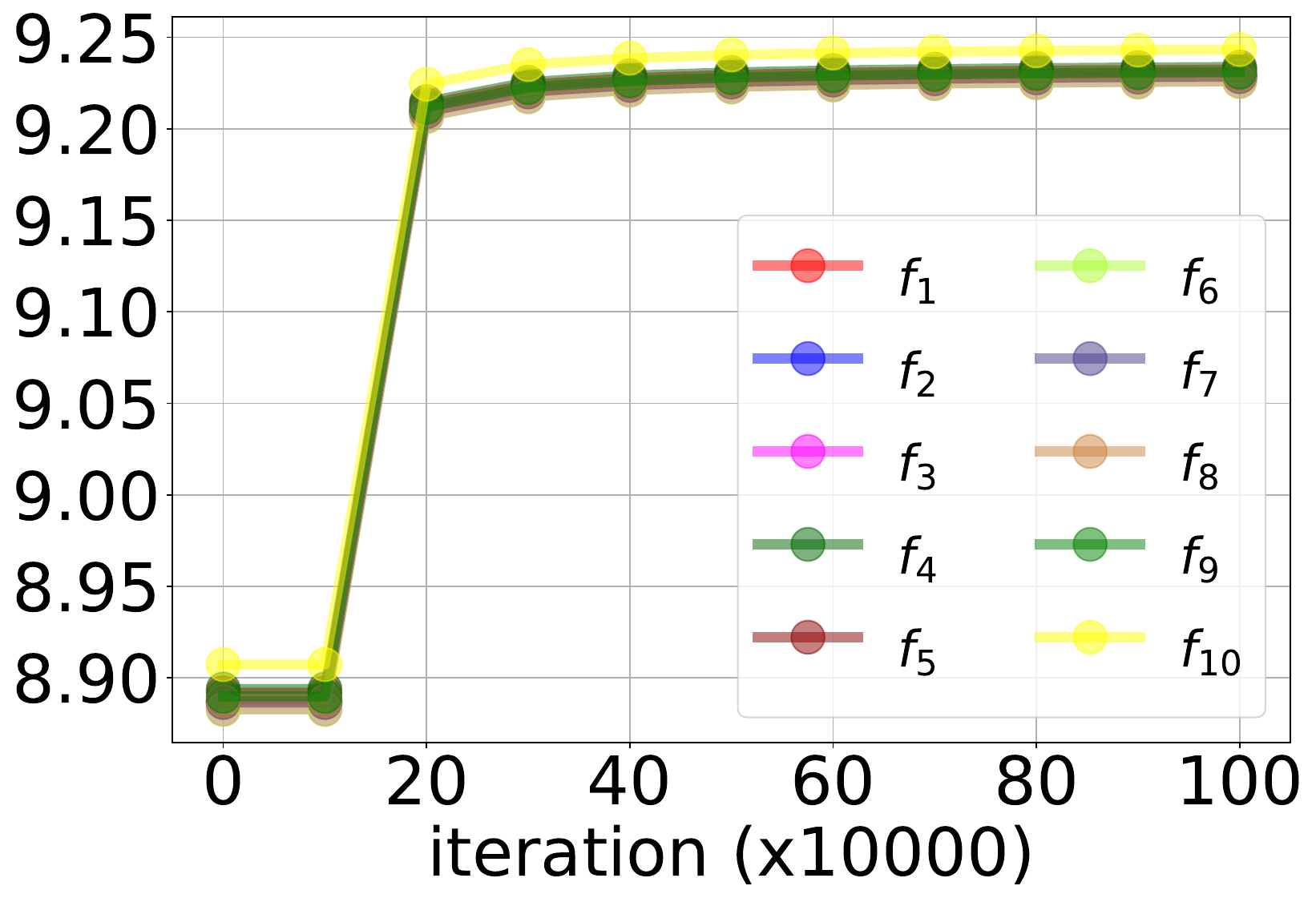}
		\end{minipage}
		&
		\begin{minipage}{.22\textwidth}
			\includegraphics[scale=.125, angle=0]{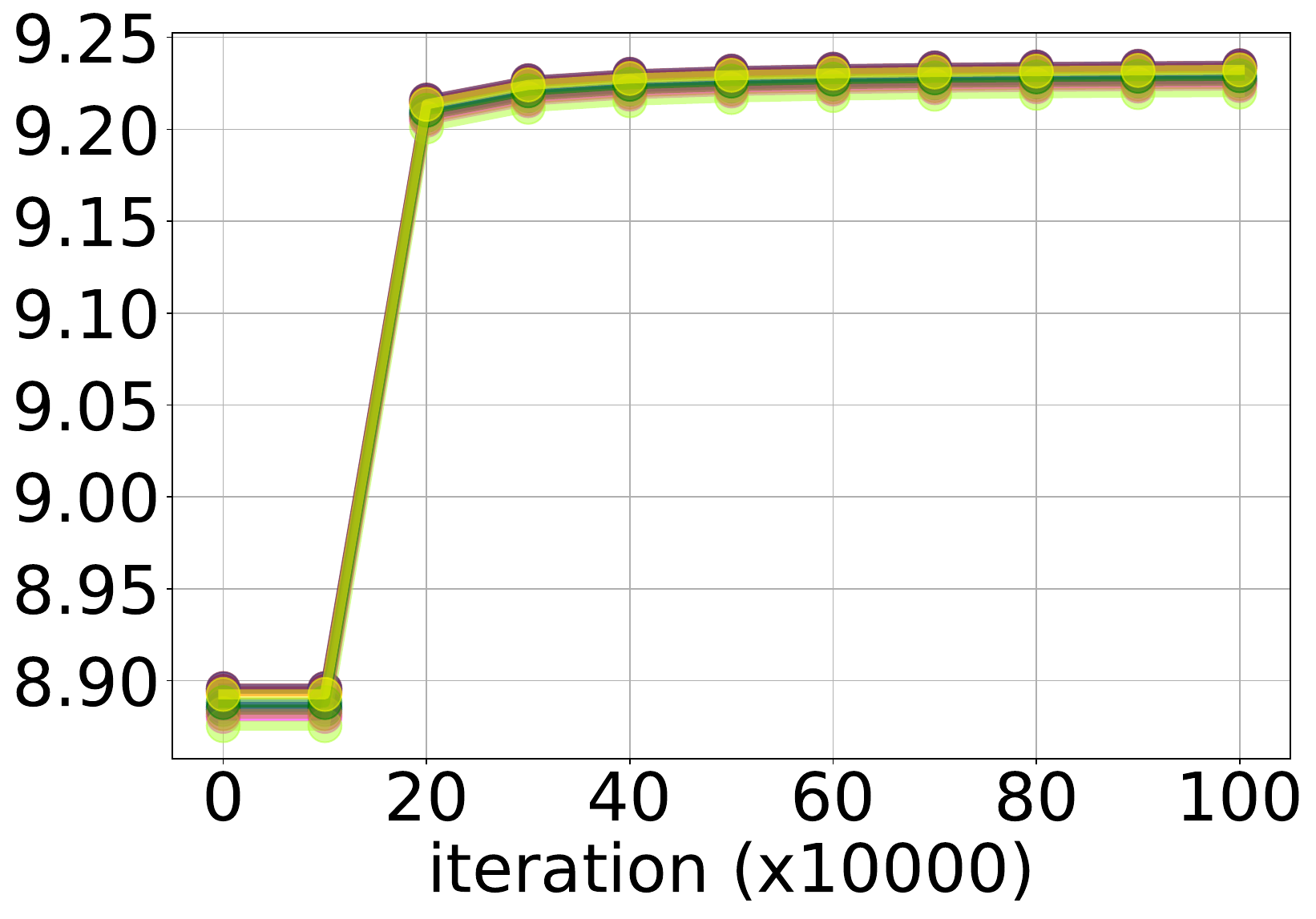}
		\end{minipage}
		&
		\begin{minipage}{.22\textwidth}
			\includegraphics[scale=.125, angle=0]{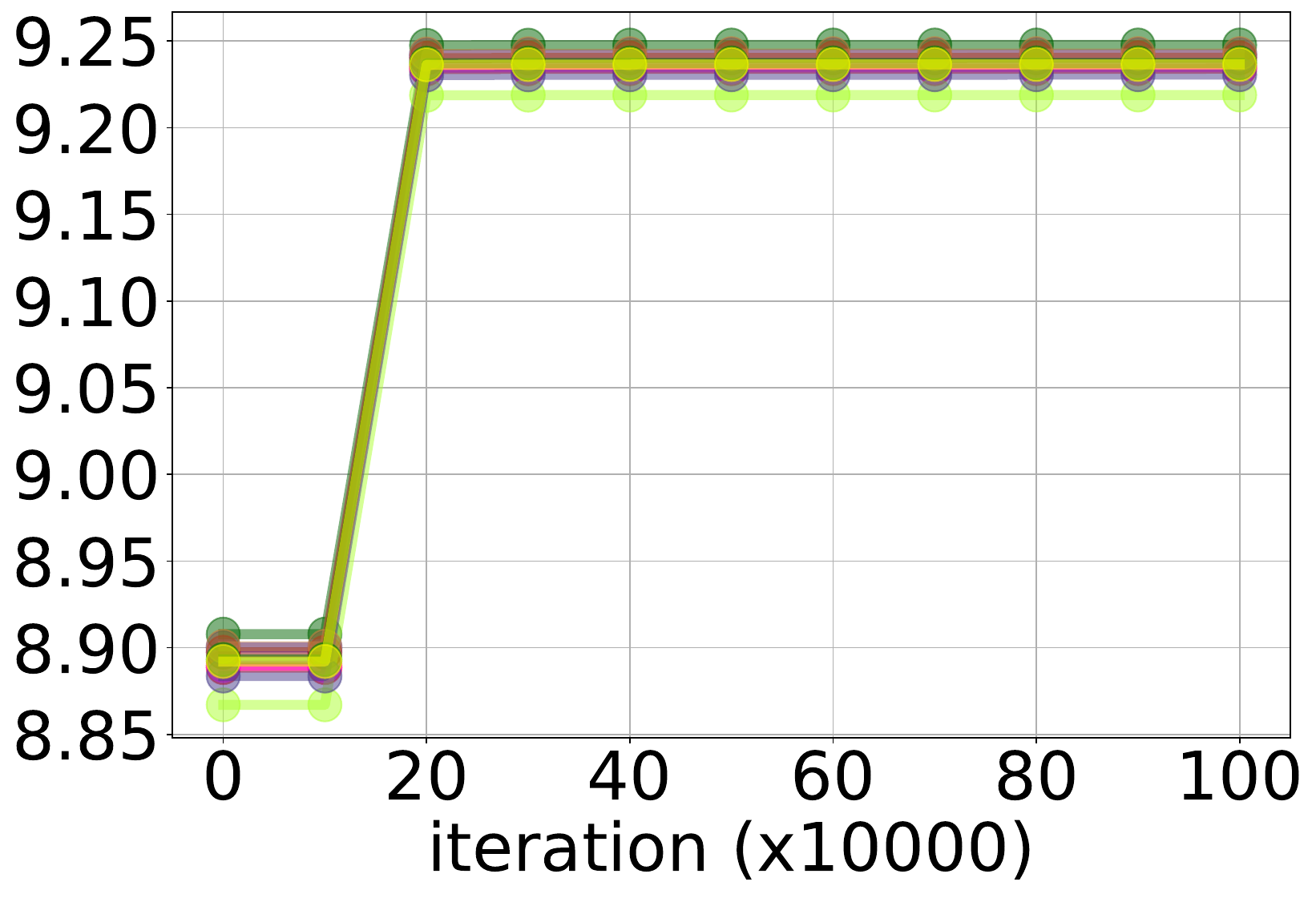}
		\end{minipage}
		&
		\begin{minipage}{.22\textwidth}
			\includegraphics[scale=.125, angle=0]{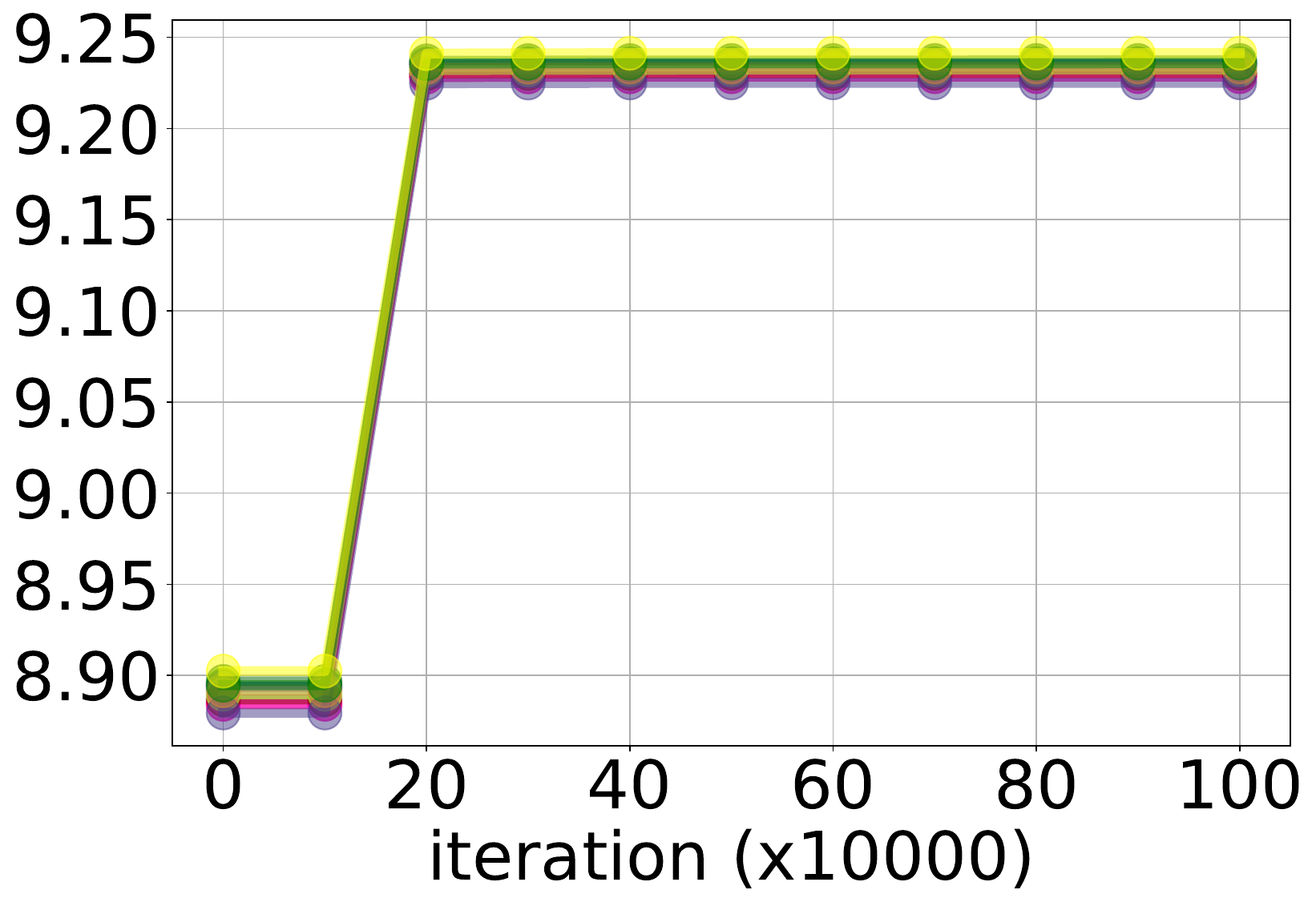}
		\end{minipage}
	\end{tabular}
	\captionof{figure}{\fy{Performance of Algorithm \ref{alg:IR-IG_avg} in addressing the transportation network example}}\label{fig:TEP}
\end{table*} 
 
  { \scriptsize \begin{align}\label{eqn:C_q}
C := \begin{bmatrix}
	0.92& 0&0&5&0\\0&5.92&0&0&5\\0&0&10.92&0&0\\ 2&0&0&10.92&0 \\ 0&1&0&0&15.92
\end{bmatrix}, \qquad q :=  \begin{bmatrix} 1000 \\ 950 \\ 3000 \\ 1000 \\ 1300 \end{bmatrix}.
\end{align}}

We note that the matrix $C$ is positive semidefinite. 
Intuitively speaking, the structure of $C$ implies that the cost of each arc in a two-way road depends on the flows on the both directions. Let $u\triangleq [u_1,u_2]^T$ denote the (unknown) vector of minimum travel costs between the origin-destination (OD) pairs, i.e., $u_1$ denotes the minimum travel cost from $n_1$ to $n_2$, and $u_2$ denotes the minimum travel cost from $n_2$ to $n_1$. The Wardrop user equilibrium principle represents the path choice behavior of the users based on the following rationale: (i) For any OD pair among all possible arcs, users tend to choose the {arc(s)} with the minimum cost. (ii) For any OD pair, the arc(s) that have the minimum cost will have positive flows and will have equal costs. (iii) For any OD pair, arcs with higher costs than the minimum value will have no flows. Mathematically the Wardrop's principle can be characterized as 
\begin{align}\label{eqn:wardrop}
	0 \leq C h + q - B^T u \perp h \geq 0, \quad 0 \leq Bh - d \perp u \geq 0,
\end{align} 
where $B \in \mathbb{R}^{2\times 5}$ denotes the (OD pair, arc)-incidence matrix given as {\scriptsize $B := \begin{bmatrix}1&1&1&0&0\\
0&0&0&1&1\end{bmatrix}$}. \fy{We} assume that the demand vector $d$ and the cost vector $q$ are subject to uncertainties \fy{and} define decision vector $x \in \mathbb{R}^7$, random variable \fy{$\xi \in \mathbb{R}^{7}$}, and stochastic mapping $F(\bullet,\xi):\mathbb{R}^7\to\mathbb{R}^7$ as 
\begin{align*}
x\triangleq {\scriptsize \begin{bmatrix}h \\ u\end{bmatrix}}, \qquad \xi \triangleq {\scriptsize \begin{bmatrix}\tilde d \\ \tilde q\end{bmatrix}}, \qquad F(x,\xi)\triangleq{\scriptsize \begin{bmatrix}C &-B^T \\ B& 0\end{bmatrix}\begin{bmatrix}h \\ u\end{bmatrix} +\begin{bmatrix}\tilde q \\ -\tilde d\end{bmatrix}}.
\end{align*}
Then from section \ref{sec:introduction}, the Wardrop equation \eqref{eqn:wardrop} can be characterized as $\mbox{VI}\left(\mathbb{R}_+^7,\mathbb{E}[F(\bullet,\xi)]\right)$. Notably due to positive semidefinite property of $C$, the mapping $\mathbb{E}[F(\bullet,\xi)]$ is merely monotone. Consequently, the aforementioned VI may have multiple equilibria. Among them, we seek to find the best equilibrium with respect to a welfare function $f$ defined as the expected total travel time over the network by all users, i.e., $f(x)\triangleq \mathbb{E}\left[(Ch+\tilde q)^T\mathbf{1}_5\right]$ \fy{where $\mathbf{1}_5$ is the 1-vector of size $5$.}
}

\noindent \textit{Set-up}: For this experiment, we assume that $\tilde d_1\sim \mathcal{N}(210,10)$, $\tilde d_2\sim \mathcal{N}(120,10)$. Also for $i=1,\ldots,5$ we let $\tilde q_i$ be normally distributed with the mean equal to $q_i$ and the standard deviation of $300$, where the vector $q$ is given by \eqref{eqn:C_q}. Following the formulation \eqref{prob:dist_best_NCP} we generate $1000$ samples for each parameter and distribute the data equally among $10$ agents. We let $\gamma_k:=\mytfrac{\gamma_0}{\sqrt{k+1}}$ and $\eta_k:=\mytfrac{\eta_0}{(k+1)^{0.25}}$ and consider different values for the initial stepsize $\gamma_{0}$ and the initial regularization parameter $\eta_{0}$. The results are as shown in Figure \ref{fig:TEP}. We use standard averaging by assuming that $r=0$. Notably for quantifying the infeasibility, we consider the metric $\phi(x) \triangleq \left\|\max\{0,-x\}\right\|^2 + \left\|\max\{0,-F(x)\}\right\|^2 + \left|x^TF(x)\right|$, where $F(x)\triangleq \sum_{i=1}^mF_i(x)$ and $F_i(x)\triangleq \sum_{\ell \in \mathcal{S}_i}F(x,\xi_\ell)$. Note that $\phi(x) =0$ if and only if $0\leq x\perp F(x) \geq 0$. We choose this metric over the dual gap function employed earlier in the analysis because in this particular example, the dual gap function becomes infinity at some of the evaluations of the generated iterates. This is due to the unboundedness of the set $X:=\mathbb{R}^n_+$. Unlike the dual gap function, $\phi(x)$ stays bounded and is more suitable to plot.

{\noindent \textit{Insights}:} In Figure \ref{fig:TEP} we observe that in all four different settings the infeasibility metric decreases as the algorithm proceeds. This indeed implies that the generated iterates by the agents tend to satisfy the {NCP constraints} with an increasing accuracy. In terms of the suboptimality metric we observe that the each agent's objective value  becomes more and more stable over time. Intuitively this implies that the agents asymptotically reach to an equilibrium.  We should note that although the function $f$ is minimized, it is minimized only over the set of equilibria. The fact that the objective values in Figure \ref{fig:TEP} are not necessarily decreasing is mainly because of the impact of feasibility violation of the iterates with respect to the {NCP constraints} throughout the implementations. 

\begin{table*}[t]
	\renewcommand\thetable{3}
	\centering
	\begin{tabular}{c|c|c}
		$ {|S|} \backslash {n}$  & 50 & 100 \\
		\hline
		100&\begin{minipage}{7.5cm}\centering\includegraphics[width=0.5\textwidth]{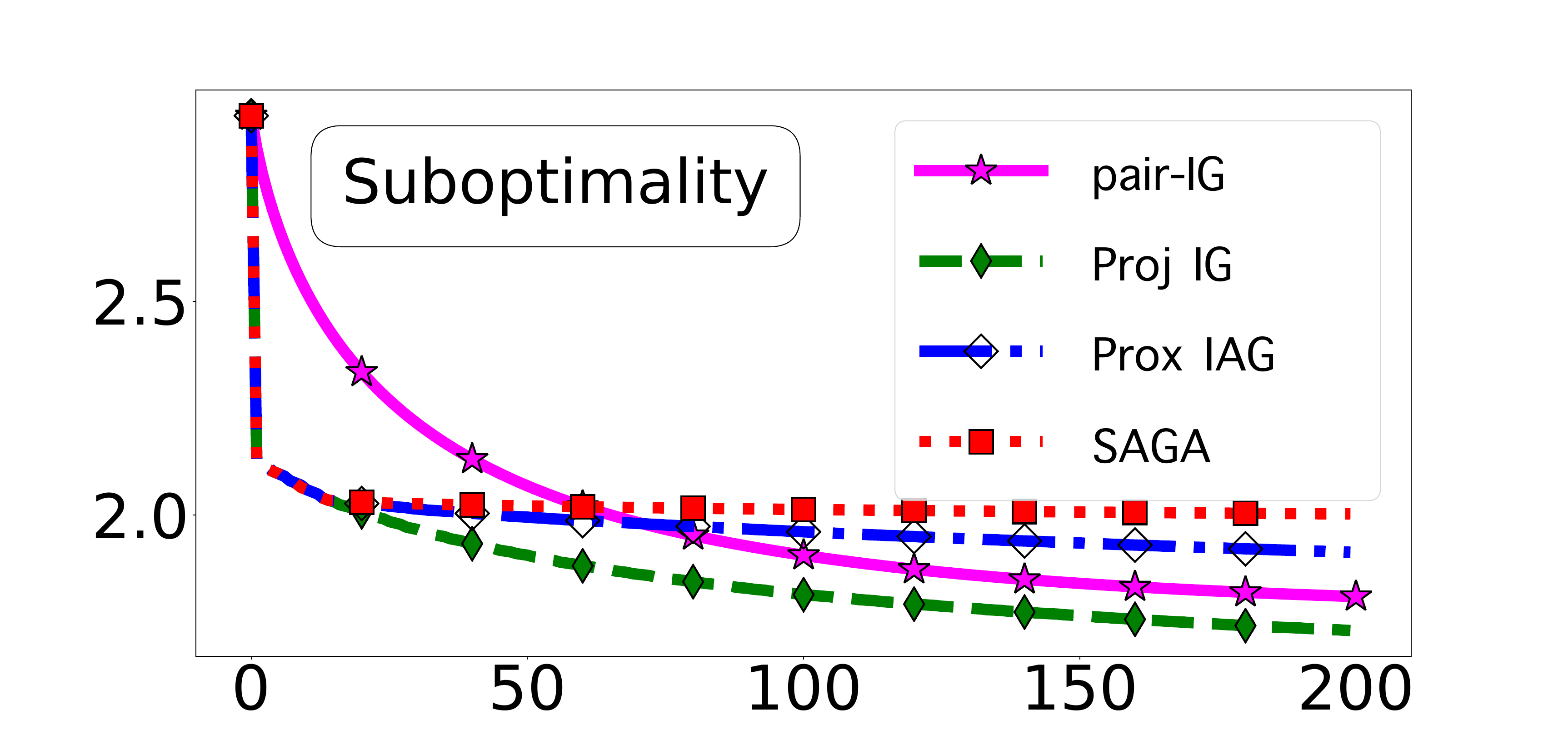}\includegraphics[width=0.5\textwidth]{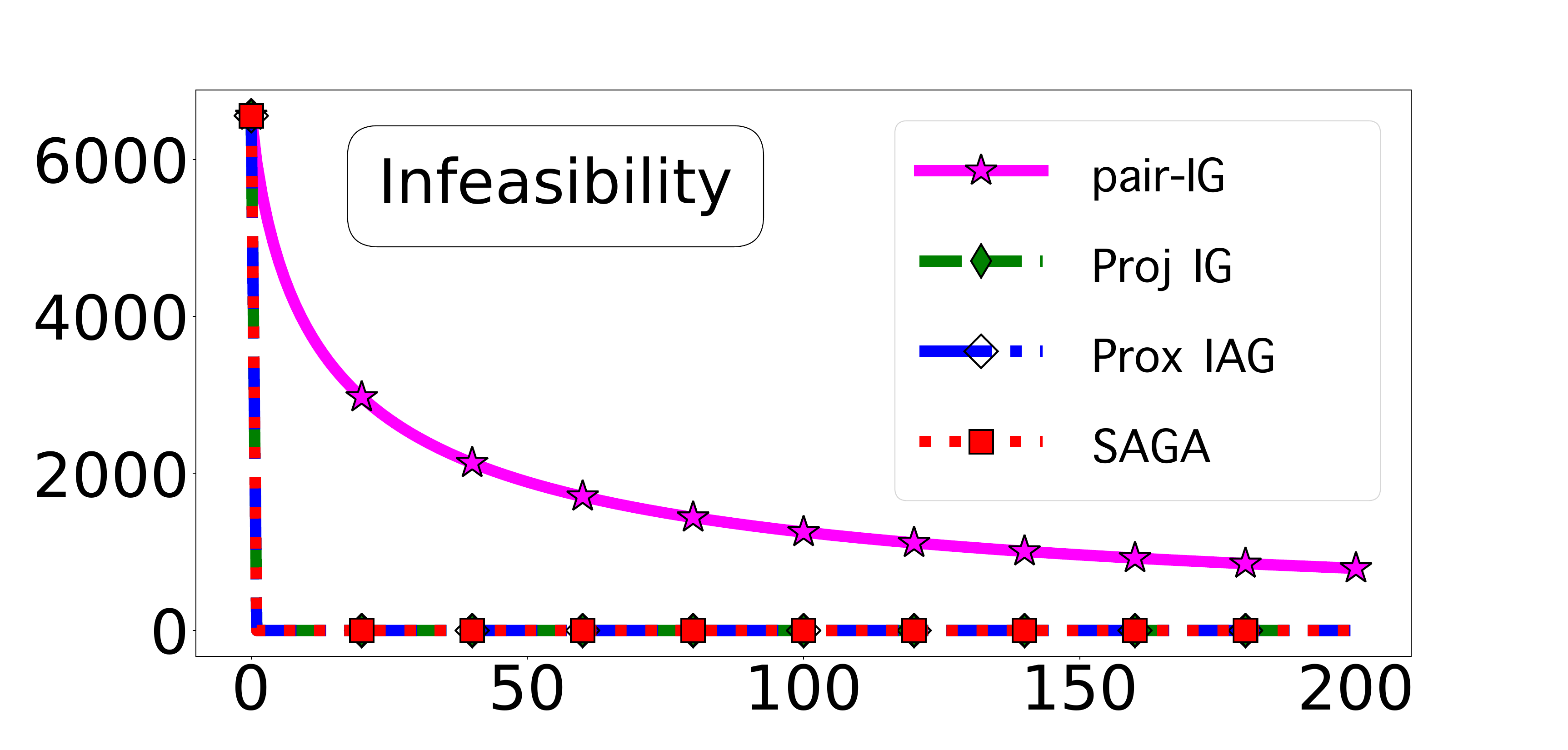}\end{minipage}
		&\begin{minipage}{7.5cm}\centering\includegraphics[width=0.5\textwidth]{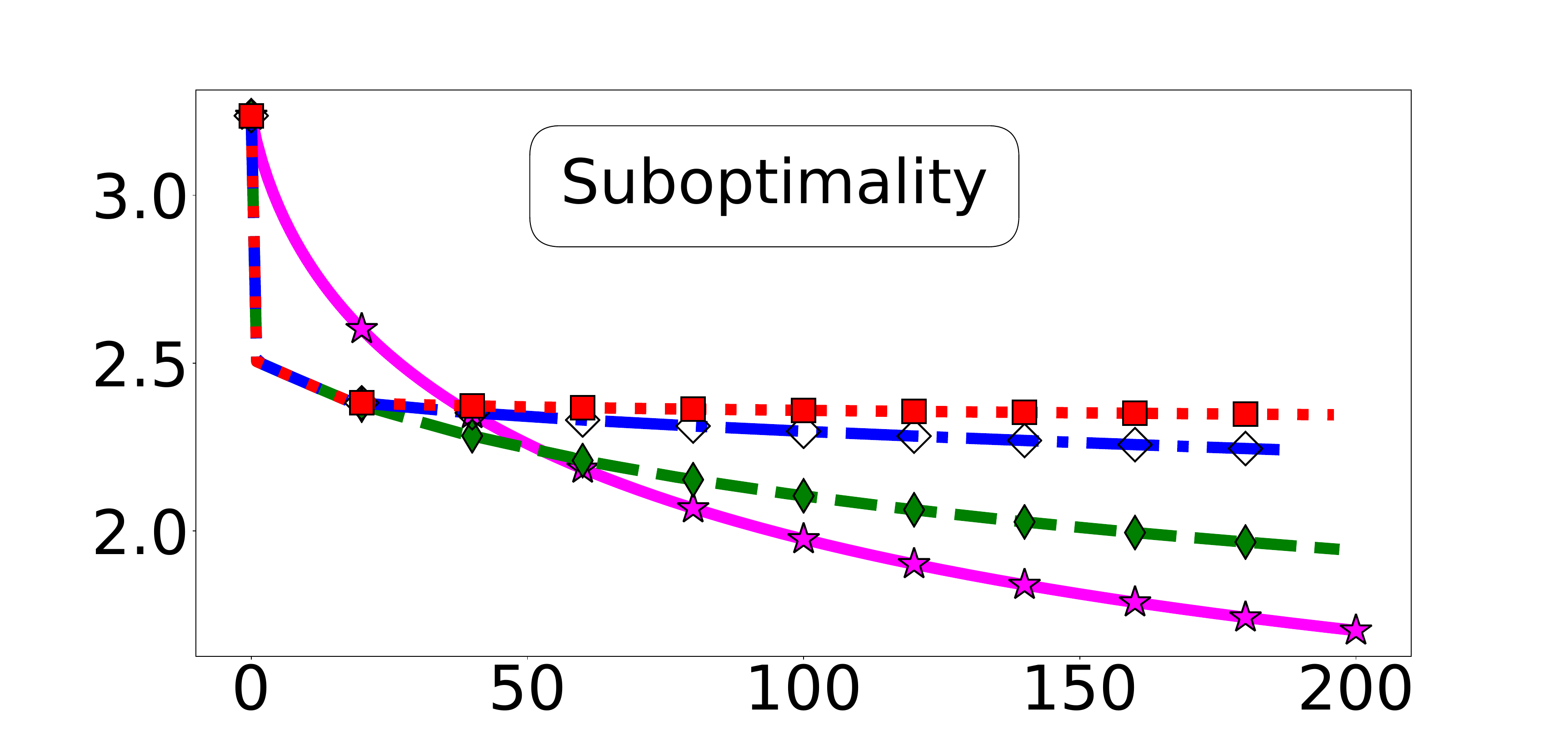}\includegraphics[width=0.5\textwidth]{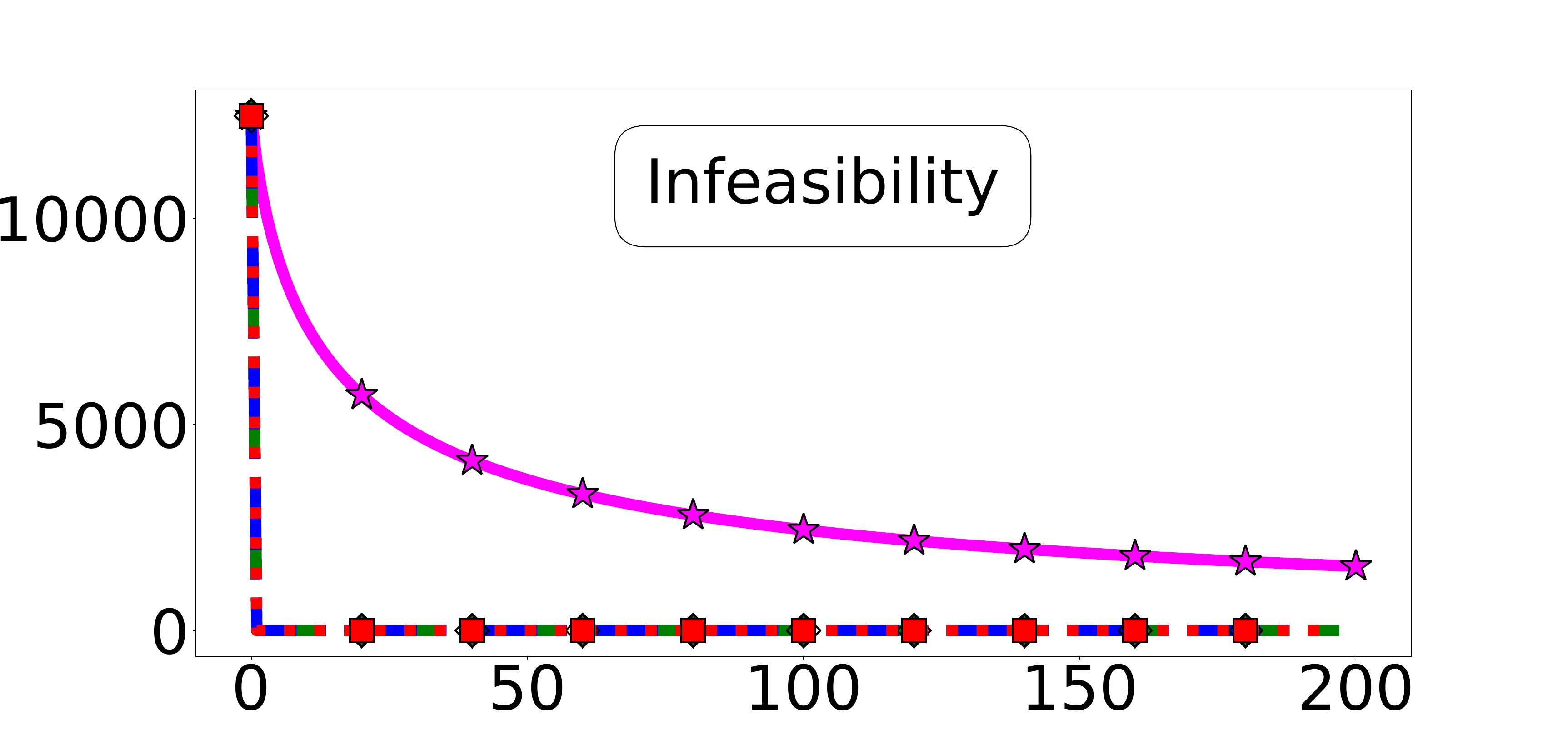}\end{minipage}
		\\\hline 200 &\begin{minipage}{7.5cm}\centering\includegraphics[width=0.5\textwidth]{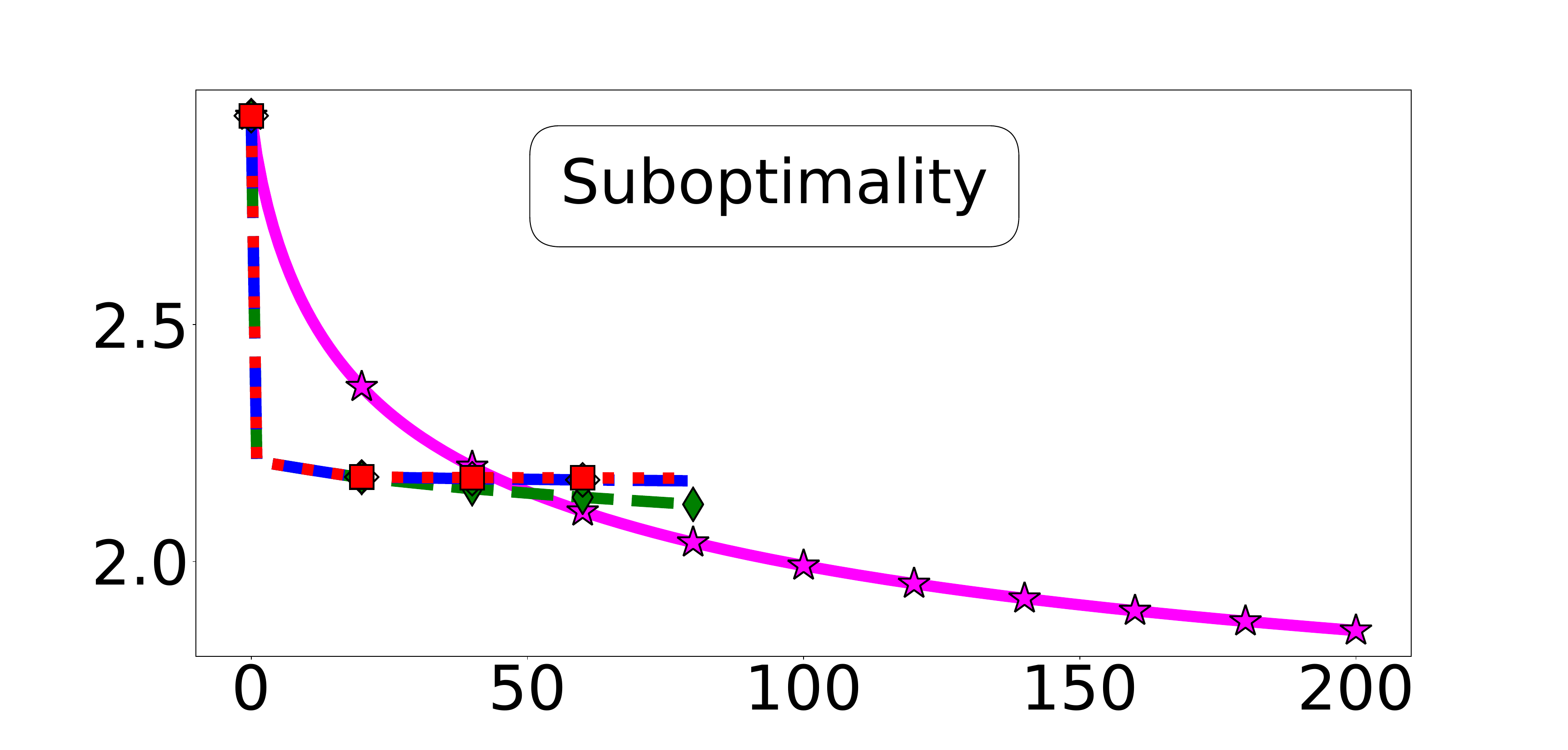}\includegraphics[width=0.5\textwidth]{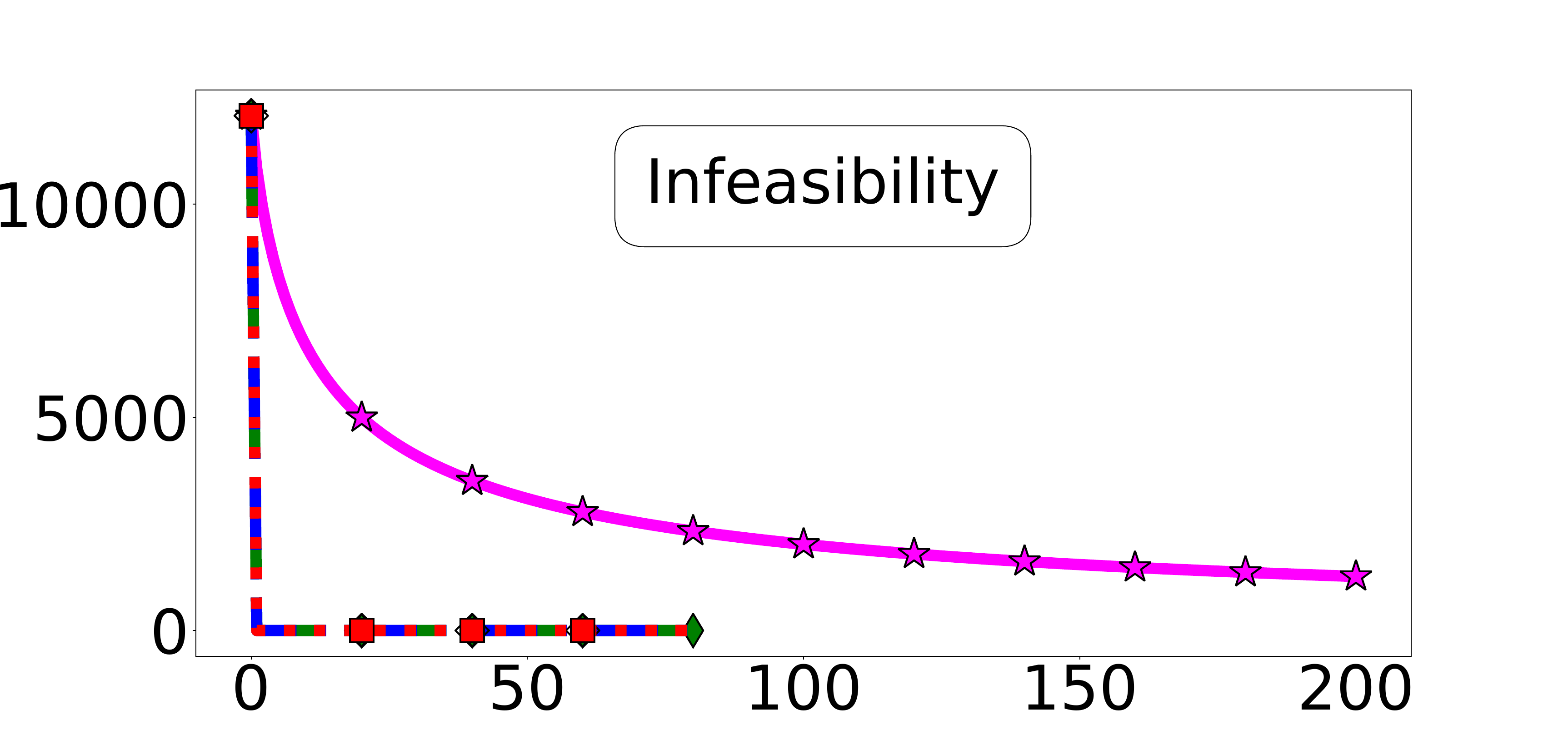}\end{minipage}
		&\begin{minipage}{7.5cm}\centering\includegraphics[width=0.5\textwidth]{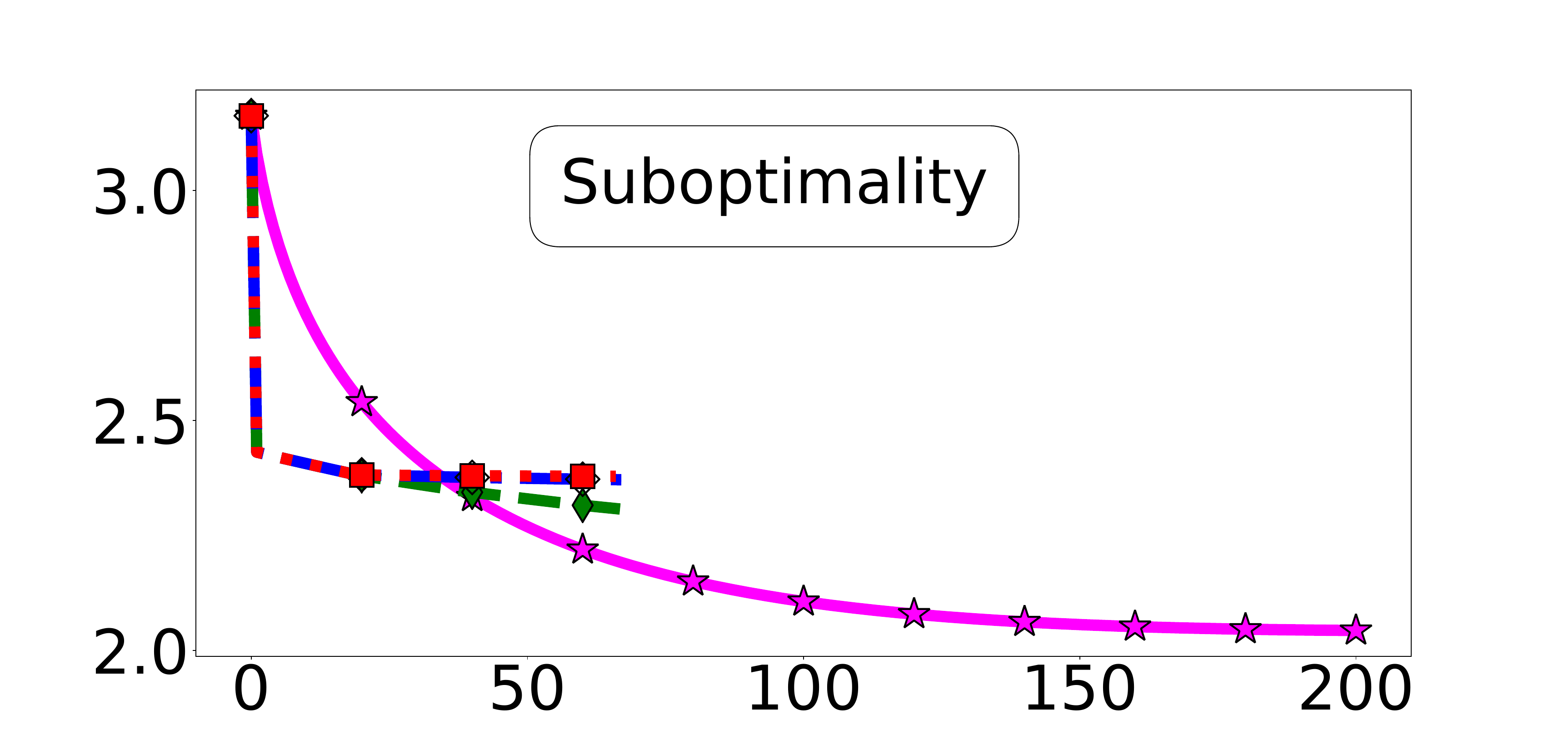}\includegraphics[width=0.5\textwidth]{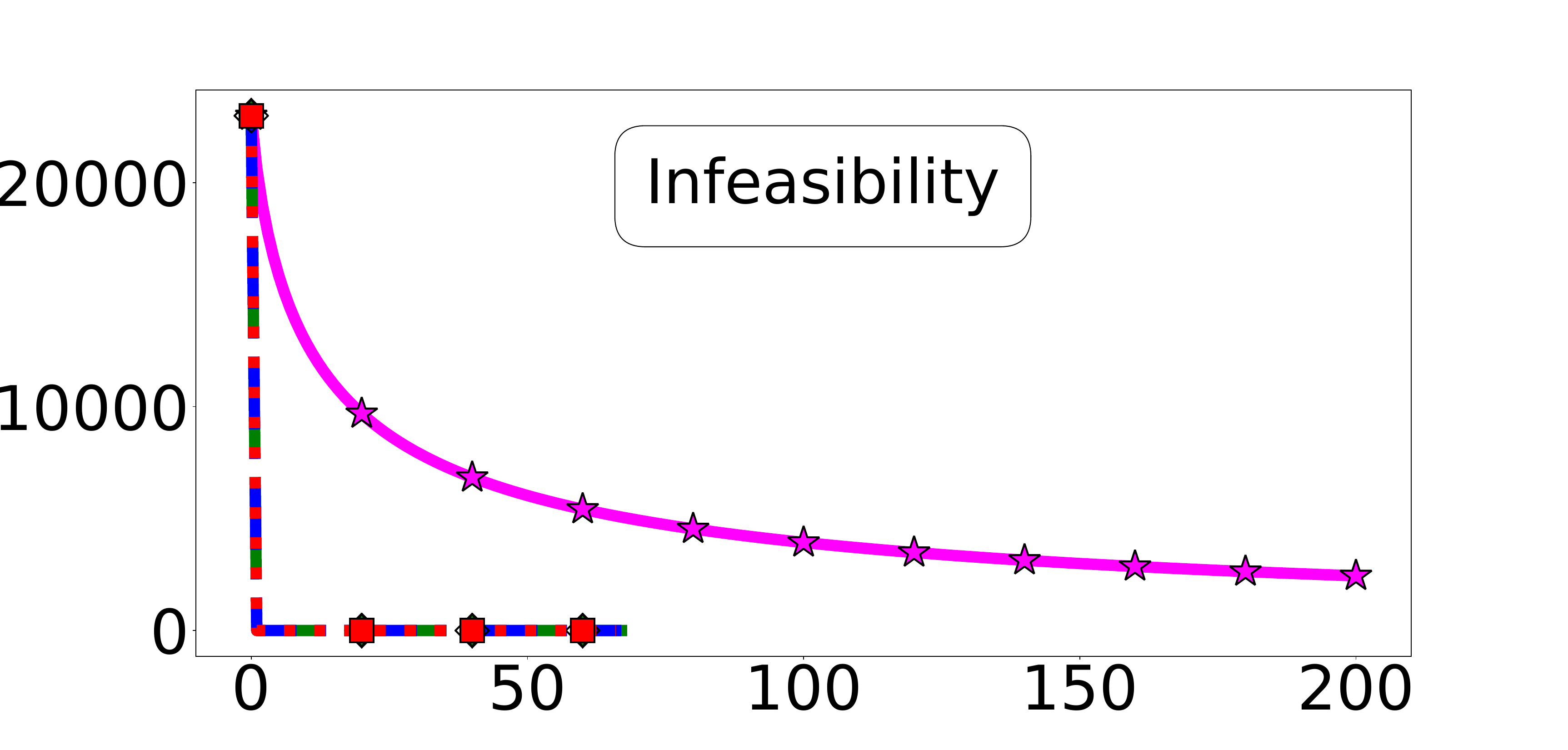}\end{minipage}
		\\\hline 500 &\begin{minipage}{7.5cm}\centering\includegraphics[width=0.5\textwidth]{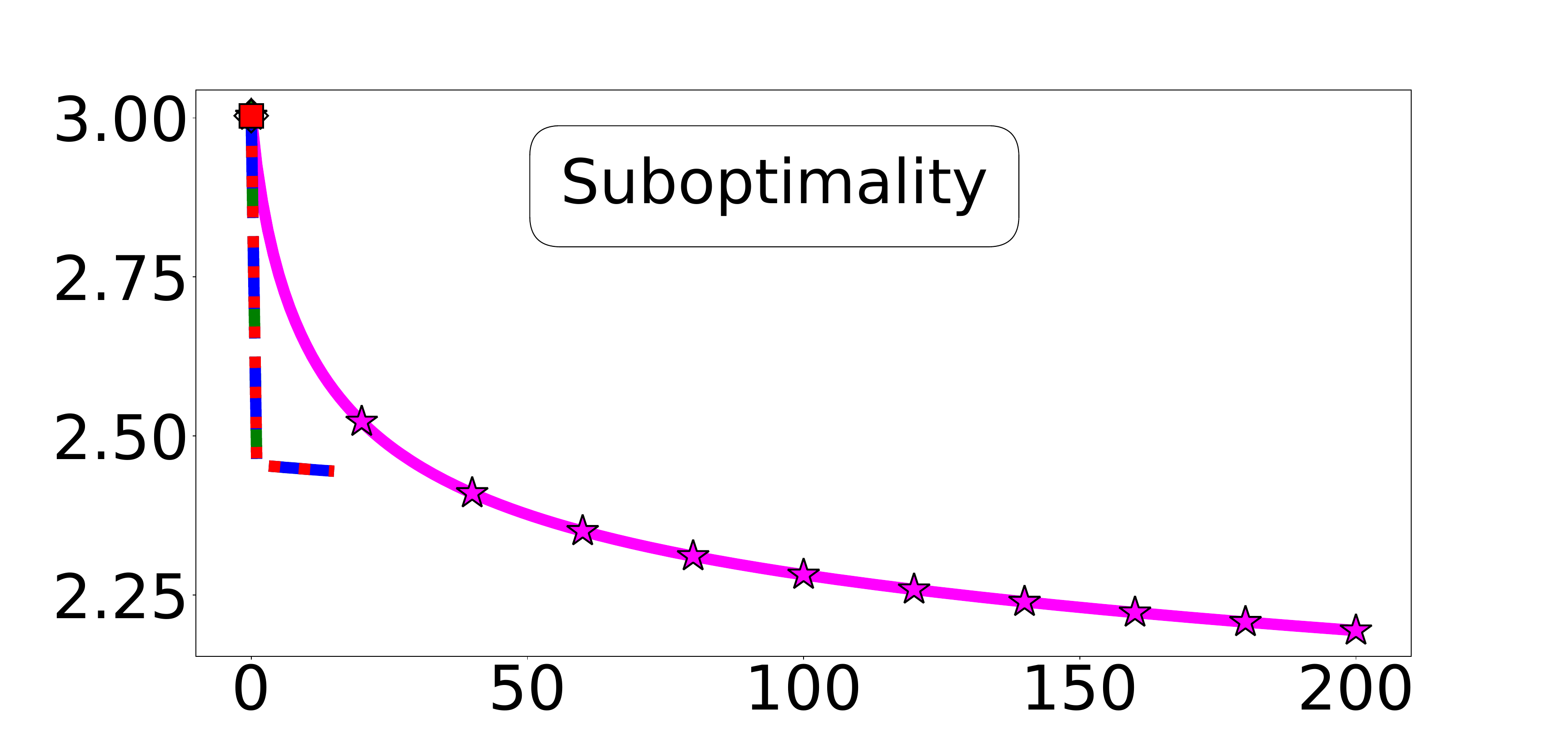}\includegraphics[width=0.5\textwidth]{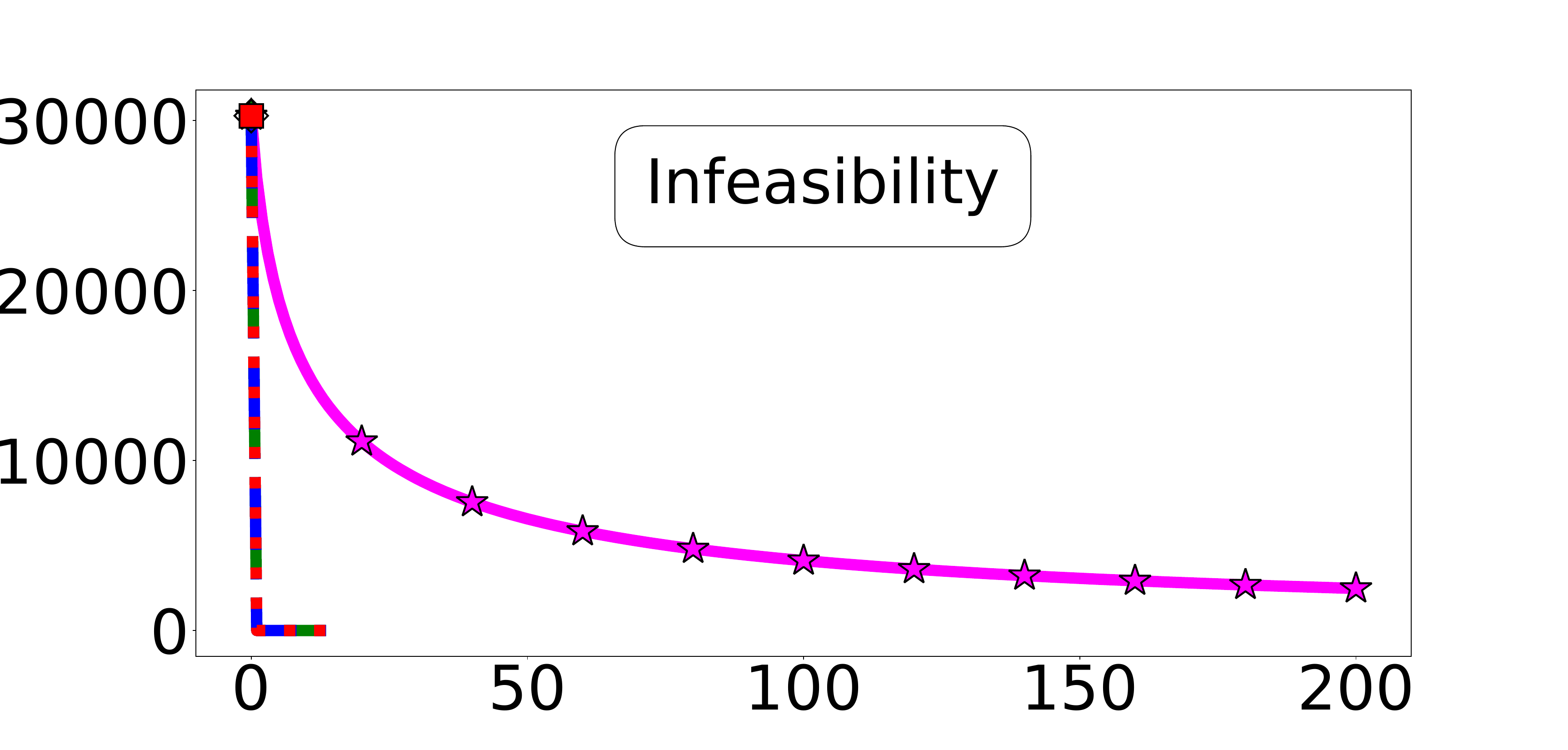}\end{minipage}
		&\begin{minipage}{7.5cm}\centering\includegraphics[width=0.5\textwidth]{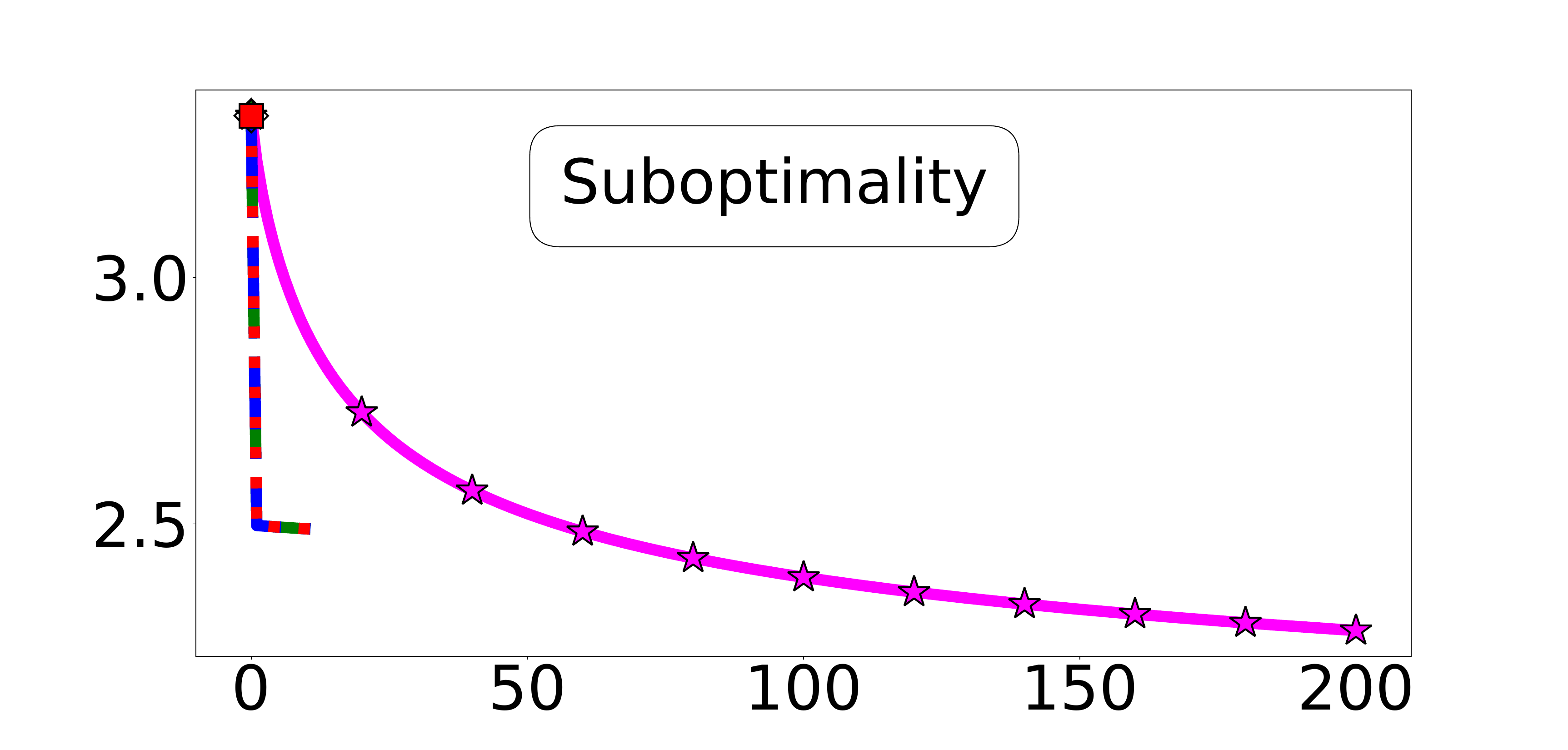}\includegraphics[width=0.5\textwidth]{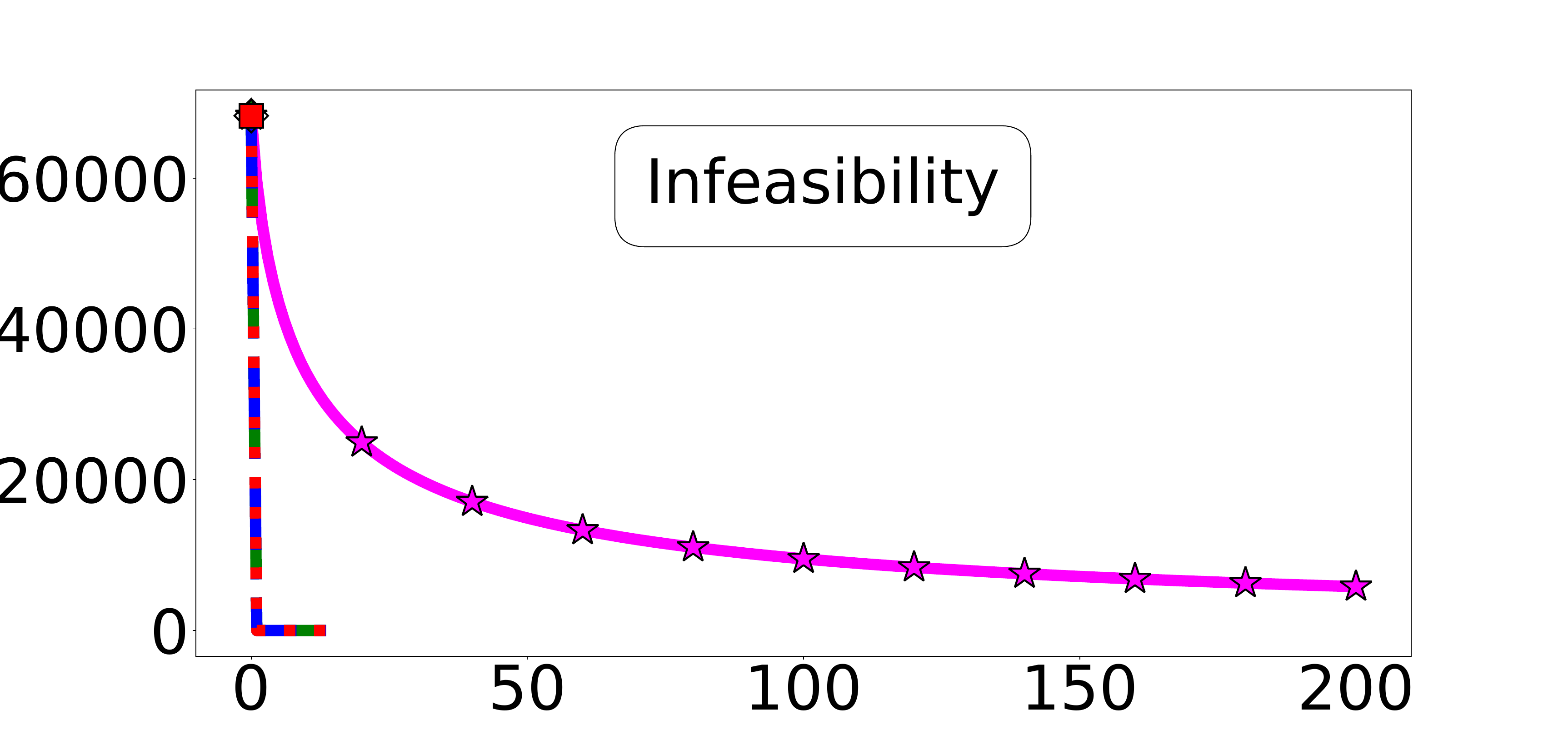}\end{minipage}\\\hline
	\end{tabular}
\captionof{figure}{Comparison of Algorithm \ref{alg:IR-IG_avg} with standard IG methods in solving an SVM model}
	\label{fig:SVM}
\end{table*}
\noindent {\bf (ii) Distributed support vector classification:} Consider a distributed soft-margin SVM problem described as follows.  Let a dataset be given as $\mathcal D \triangleq \{(u_j,v_j) \mid u_j\in \mathbb{R}^n, \ v_j \in \{-1,+1\}, \hbox{ for } j \in S\}$ where $u_j$ and $v_j$ denote the attributes and the binary label of the $j^{\text{th}}$ data point, respectively, and $S$ denotes the index set.  Let the data be distributed among $m$ agents by defining $\mathcal{D}_i $ such that $\cup_{i=1}^m \mathcal{D}_i =\mathcal D$. Let $S_i$ denote the index set corresponding to agent $i$ such that $\sum_{i=1}^{m}|S_i|=|S|$.  Then given $\lambda >0$, the distributed SVM problem is given as
	{\begin{equation}\label{prob:SVM}
			\begin{aligned}
	   & \underset{w,b,z}{\text{minimize}}&& \textstyle\sum\nolimits_{i = 1}^m\left(\mytfrac{1}{2m}\|w \|^2 + \mytfrac{1}{\lambda}\sum_{j \in S_i} z_j\right) \\
	& \text{subject to}  &&   v_j(w^T{u}_j + b)   \geq 1 - z_j,  \  \hbox{for } j \in S_i \text{ and }i \in [m], \\
	&	&& z_j \geq 0,    \qquad \qquad \qquad \ \ \ \  \text{for } j \in S_i\hbox{ and  }i \in [m].
		\end{aligned}
	\end{equation}}
To solve problem \eqref{prob:SVM} using Algorithm \ref{alg:IR-IG_avg}, we \fy{use the following result by casting \eqref{prob:SVM} as model \eqref{prob:initial_problem}.}
	\fy{\begin{lemma}\label{Lem:nonlinear_const_DO_problem} {Consider the following problem
	\begin{equation}\label{prob:subclass_constrained_opt}
		\begin{aligned}
			& {\text{minimize}} && \textstyle\sum\nolimits_{i=1}^{m}f_i(x) \\
			& \text{subject to}  &&g_{i,1}(x) \leq 0\ ,\  \ldots\ , \  g_{i,n_i}(x) \leq 0,&&  {\hbox{for } i \in [m]},\\
		&	&&  A_ix=b_i, &&  {\hbox{for }  i \in [m]}, \\
		&	&&  x \in X,
		\end{aligned}
	\end{equation}
	where agent $i$ is associated with function $f_i:\mathbb{R}^n \to\mathbb{R}$, functions $g_{i,j}:\mathbb{R}^n \to \mathbb{R}$ for $j \in {[n_i]}$, and parameters $A_i \in \mathbb{R}^{d_i\times n} $ and $b_i \in \mathbb{R}^{d_i}$. Let functions $g_{i,j}(x)$ be continuously differentiable and convex  for all {$i \in [m]$} and {$j \in[n_i]$}. Assume that the feasible region of problem \eqref{prob:subclass_constrained_opt} is nonempty and the set $X$ is closed and convex. Then, problem \eqref{prob:subclass_constrained_opt} is equivalent to \eqref{prob:initial_problem} where we define $F_i:\mathbb{R}^n \to \mathbb{R}^n$ as $F_i(x) \triangleq A_i^T(A_ix-b_i)+\textstyle\sum_{j=1}^{n_i}\max\{0,g_{i,j}(x)\}\nabla g_{i,j}(x)$.}
	\end{lemma} }
	\begin{proof}
	See Appendix \ref{app:lem:nonlinear_const_DO_problem}.
\end{proof}

We also implement some of the \fy{existing} IG schemes, namely projected IG, proximal IAG, and SAGA.  \fy{In solving \eqref{prob:SVM},} these schemes require a projection step onto the constraint set.  To \fy{implement} projections we use the Gurobi-Python solver.

{\noindent \textit{Set-up}: We consider $20$ agents and assume that $\lambda := 10$. We let $\gamma_k:=\mytfrac{1}{\sqrt{k+1}}$ and $\eta_k:=\mytfrac{1}{(k+1)^{0.25}}$ in Algorithm \ref{alg:IR-IG_avg}.  We use identical initial stepsizes in all the methods. We provide the comparisons with respect to the runtime and report the performance of each scheme over $200$ seconds.  We use a synthetic dataset with different values for $n$ and $|S|$.  The suboptimality is characterized in terms of the global objective and the infeasibility is measured by quantifying the violation of constraints of problem \eqref{prob:SVM} aligned with ideas in Lemma \ref{Lem:nonlinear_const_DO_problem}.

\noindent \textit{Insights}: In Figure \ref{fig:SVM}, we observe that with an increase in the dimension of the solution space, i.e., $n$, or the size of the training dataset, i.e., $|S|$, the projection evaluations in the standard IG schemes take longer and consequently, the performance of the IG methods is deteriorated in large scale settings. However,  utilizing the reformulation in Lemma \ref{Lem:nonlinear_const_DO_problem}, the proposed method does not require any projection operations for addressing problem \eqref{prob:SVM}.  As such, the performance of Algorithm \ref{alg:IR-IG_avg} does not get affected severely with the increase in $n$ or $|S|$.  Note that in Figure \ref{fig:SVM}, the reason that the IG schemes do not show any updates for $ |S| = 200$ and $|S|=500$ beyond a time threshold is because of the interruption in their last
update when the method reaches the $200$ seconds time limit.}

\section{Conclusion}\label{sec:concludingrem}
\fy{We consider distributed constrained optimization over the solution set of a merely monotone variational inequality problem. This formulation can capture problems with complementarity, nonlinear equality, or equilibrium constraints. We develop an iteratively regularized incremental gradient method where at each iteration agents communicate over a cycle graph.  We derive new iteration complexity bounds in terms of the global objective function and a suitably defined infeasibility metric. We validate the theoretical results on a transportation network problem and a support vector machine model. The extension to distributed methods over more general networks remains as an interesting future direction to our work. }
 
\appendix
\section{Supplementary proofs}
\subsection{Proof of Lemma \ref{Lem:nonlinear_const_DO_problem}}\label{app:lem:nonlinear_const_DO_problem}
\fy{Define $\Theta_i(x) \triangleq \mytfrac{1}{2}\|A_ix-b_i\|^2+\mytfrac{1}{2}\sum_{j=1}^{n_i} \left(\max\{0,g_{i,j}(x)\}\right)^2$ for $i\in [m]$. Note that $\Theta_i(x)$ is continuously differentiable} (see page 380 in~\cite{BertsekasNLPBook2016}). Thus, we have that $\nabla \Theta_i(x) = F_i(x)$ where $F_i(x)$ is given in the statement of Lemma \ref{Lem:nonlinear_const_DO_problem}. Next, we show that $F_i$ is a monotone mapping. From the convexity of $g_{i,j}(x)$, the function $\max\{0,g_{i,j}(x)\}$ is convex. Now, note that the function $\tfrac{1}{2}\left(\max\{0,g_{i,j}(x)\}\right)^2$ can be viewed as the composition of the nondecreasing function $h(y)\triangleq \tfrac{1}{2}y^2$ for $y \in \mathbb{R}_+$ and the convex function $\max\{0,g_{i,j}(x)\}$. Thus, $\tfrac{1}{2}\left(\max\{0,g_{i,j}(x)\}\right)^2$ is convex. This implies that $\Theta_i$ is a convex function and consequently, its gradient mapping that is $F_i(x)$, is monotone. Recalling the first-order optimality conditions for the convex optimization problems and taking into account the definition of $\text{SOL}(X,\textstyle\sum_{i=1}^m F_i)$, we have that $\text{SOL}(X,\textstyle\sum_{i=1}^m F_i) = \mathrm{argmin}_{x \in X} \textstyle\sum_{i=1}^m\Theta_i(x)$. Let $\mathcal{Z}$ denote the feasible set of problem \eqref{prob:subclass_constrained_opt}. To complete the proof, it suffices to show that $\mathcal{Z}= \mathrm{argmin}_{x \in X} \textstyle\sum_{i=1}^m\Theta_i(x)$. First, consider an arbitrary $\bar x \in \mathcal{Z}$. Then, from the definition of $\Theta_i(x)$ we must have $\Theta_i(\bar x)=0$ for all $i$, implying that $\textstyle\sum_{i=1}^m\Theta_i(\bar x) =0$. Since the feasible set of problem \eqref{prob:subclass_constrained_opt} is nonempty and that $\textstyle\sum_{i=1}^m\Theta_i(x) \geq 0$ for all $x \in X$ , we conclude that $\bar x \in \mathrm{argmin}_{x \in X} \textstyle\sum_{i=1}^m\Theta_i(x)$. Thus, we showed that $\mathcal{Z}\subseteq  \mathrm{argmin}_{x \in X} \textstyle\sum_{i=1}^m\Theta_i(x)$. Now, consider an arbitrary $\tilde x \in \mathrm{argmin}_{x \in X} \textstyle\sum_{i=1}^m\Theta_i(x)$. Thus, $\tilde x \in X$. Also, the assumption that the feasible set of problem \eqref{prob:subclass_constrained_opt} is nonempty implies that there \fy{exists} $x_0 \in X$ such that $A_ix_0=b_i$, $g_{i,j}(x_0) \leq 0$ for all {$i \in [m]$} and {$j \in [n_i]$}. Thus, we have $\textstyle\sum_{i=1}^m\Theta_i(x_0) =0$. From the nonnegativity of the function $\textstyle\sum_{i=1}^m\Theta_i(x)$ and that $\tilde x \in \mathrm{argmin}_{x \in X} \textstyle\sum_{i=1}^m\Theta_i(x)$, we must have $\textstyle\sum_{i=1}^m\Theta_i(\tilde x)=0$. Therefore, we obtain $A_i\tilde x=b_i$, $g_{i,j}(\tilde x) \leq 0$ for all {$i \in [m]$} and {$j \in [n_i]$}. Thus, we have shown that $ \mathrm{argmin}_{x \in X} \textstyle\sum_{i=1}^m\Theta_i(x)\subseteq \mathcal{Z} $. Hence, we have  $\mathcal{Z} = \text{SOL}(X,\textstyle\sum_{i=1}^m F_i)$ and the proof is completed.

\subsection{Proof of Lemma \ref{lem:stepsize_reg_parameter_sequence_a}}\label{app:lem:stepsize_reg_parameter_sequence_a}
 {\noindent (i) This result holds directly from Definition \ref{def:sequences_tikh_section} and that $\gamma_0=\mytfrac{\gamma}{\Gamma^a}$ and $\eta_0=\mytfrac{\eta}{\Gamma^b}$.

\noindent (ii) For any $k\geq 1$ we have 
\begin{align*}
1-\mytfrac{\eta_{k_2}}{\eta_{k_1}} =1-\mytfrac{\eta(k_2+\Gamma)^{-b}}{\eta(k_1+\Gamma)^{-b}} =1 - \left(\mytfrac{k_1+\Gamma}{k_2+\Gamma}\right)^b \leq 1 - \sqrt{\mytfrac{k_1+\Gamma}{k_2+\Gamma}},
\end{align*}
where the last inequality is due to $b<0.5$ and that $k_1\leq k_2$. We obtain 
\begin{align*}
1-\mytfrac{\eta_{k_2}}{\eta_{k_1}}  \leq \fy{\left(1 - {\mytfrac{k_1+\Gamma}{k_2+\Gamma}}\right)/\left(1 +\sqrt{\mytfrac{k_1+\Gamma}{k_2+\Gamma}}\right)} \leq \mytfrac{k_2-k_1}{k_2+\Gamma}.
\end{align*}

\noindent (iii) Let us use (ii) for $k_1:=k-1$ and $k_2:=k$. For all $k\geq 1$  
\begin{align*}
\mytfrac{1}{\gamma_k^3\eta_{k}} \left( 1- \mytfrac{\eta_{{k}}}{\eta_{k-1}} \right)^2& \leq \mytfrac{(k+\Gamma)^{3a}(k+\Gamma)^b}{\gamma^3\eta(k+\Gamma)^2}= \mytfrac{1}{\gamma^3\eta(k+\Gamma)^{2-3a-b}}\\
 & \leq \mytfrac{1}{\gamma^3\eta\Gamma^{2-3a-b}},
\end{align*}
where \fy{we used} $3a+b<2$, $\Gamma>0$, and that $k\geq 1$. 

\noindent (iv) For all $k\geq 1$ we can write 
\begin{align*}
\mytfrac{1}{\gamma_{k}\eta_{{k}}\mu_{\text{min}}}\left(	\mytfrac{\eta_{{k}}\gamma_{k-1}}{\gamma_{k}\eta_{{k-1}}} -1\right) 
&=\mytfrac{(k+\Gamma)^{a+b}}{\gamma\eta\mu_{\text{min}}}\left(\left(1+\mytfrac{1}{k+\Gamma-1}\right)^{a-b} -1\right)\\&\leq \mytfrac{(k+\Gamma)^{a+b}}{\gamma\eta\mu_{\text{min}}(k+\Gamma-1)},
\end{align*}
where \fy{we used} $a-b <1$, $k\geq 1$, and $\Gamma\geq 1$. We obtain
\begin{align*}
\mytfrac{1}{\gamma_{k}\eta_{{k}}\mu_{\text{min}}}\left(	\mytfrac{\eta_{{k}}\gamma_{k-1}}{\gamma_{k}\eta_{{k-1}}} -1\right) 
&\leq \mytfrac{1}{\gamma\eta\mu_{\text{min}}(k+\Gamma)^{1-a-b}}\left(1+\mytfrac{1}{\Gamma}\right)\\&\leq \mytfrac{2}{\gamma\eta\mu_{\text{min}}\Gamma^{1-a-b}} \leq 0.5,
\end{align*}
where the last two relations are implied by $\Gamma \geq 1$ and $\Gamma^{1-a-b}\geq  \mytfrac{4}{\gamma\eta\mu_{\text{min}}}$, respectively. This \fy{implies} part (iv).}



\bibliographystyle{ieeetr}
\bibliography{ref_pairIG_v01_fy}

\end{document}